\newtheorem{example}{Example}[section]
\newtheorem{theorem}[example]{Theorem}
\newtheorem{proposition}[example]{Proposition}
\newtheorem{definition}[example]{Definition}
\newtheorem{lemma}[example]{Lemma}
\newtheorem{corollary}[example]{Corollary}
\newtheorem{remark}[example]{Remark}
\numberwithin{equation}{section}
\begin{document}

\changetext{}{+2.0cm}{-1.0cm}{-1.0cm}{}

\title{ \bf  Singularities of Connection Ricci Flow and Ricci Harmonic Flow}
\author{Pengshuai Shi}
\date{}

\maketitle

\begin{abstract}
In this paper, we study the singularities of two extended Ricci flow
systems --- connection Ricci flow and Ricci harmonic flow using
newly-defined curvature quantities. Specifically, we give the
definition of three types of singularities and their corresponding
singularity models, and then prove the convergence. In addition, for
Ricci harmonic flow, we use the monotonicity of functional
$\nu_\alpha$ to show the connection between finite-time singularity
and shrinking Ricci harmonic soliton. At last, we explore the
property of ancient solutions for Ricci harmonic flow.
\end{abstract}

\section{Introduction}

The Ricci flow theory which was founded by Richard Hamilton in 1980s has a big influence on
differential geometry. It is a very good tool to study the structure and characterization of
some manifolds and led to the solutions of Poincar${\rm \acute{e}}$ conjecture and geometrization
conjecture. When considering the solutions of the Ricci flow equation, it is inevitable to
make contact with singularities. That is, the flow stops as a result of the degeneration of
some geometric quantities. In this situation, in order to make the flow continue past the
singularities, we should adopt the method of geometric surgery introduced by Grigori Perelman.
This is the key point in proving the Poincar${\rm \acute{e}}$ conjecture

On the basis of Ricci flow, in recent years, some extended Ricci flow systems
began to be researched by people. The followings are two of them.

The first one is connection Ricci flow (CRF). Its equation is
\begin{eqnarray}
\frac{\partial}{\partial t}g\!\!\!&=&\!\!\! -2{\rm
Rc}+\frac{1}{2}\mathcal{H},
\nonumber \\
\frac{\partial}{\partial t}H\!\!\!&=&\!\!\!\Delta_{\rm LB}H,
\end{eqnarray}
where $H$ is a closed 3-form on manifold $M^n$, $\mathcal{H}_{ij}=g^{pq}g^{rs}
H_{ipr}H_{jqs}$, and $\Delta_{{\rm LB}}=-dd^\ast-d^\ast d$ represents the
Laplace-Beltrami operator.

The second one is Ricci harmonic flow. Its equation is
\begin{eqnarray}
\frac{\partial}{\partial t}g\!\!\!&=&\!\!\!-2{\rm
Rc}+2\alpha\nabla\phi\otimes\nabla\phi,
\nonumber \\
\frac{\partial}{\partial t}\phi\!\!\!&=&\!\!\!\tau_g\phi,
\end{eqnarray}
where $\phi(t):M^m\to N^n$ is a family of smooth maps between Riemannian manifolds,
$\tau_g\phi$ is the tension field of $\phi$ with respect to $g$, $\alpha$ is
nonnegative and time-dependent in general. But in this paper, we assume that $\alpha\ge0$
is a time-independent constant.

Connection Ricci flow is a special case of the renormalization group flow in physics,
and can also be seen as the Ricci flow in manifolds with non-trivial torsion. [St] and
[Li] studied the short-time existence, evolution equations of curvature, derivative
estimate, variational structure and compactness theorem from different angles.

Ricci harmonic flow arises from the combination of Ricci flow and harmonic map flow.
Main work is done by [Mue] and [List]. In addition to the results above, they also
gave the primary property of singularity, no breathers theorem and non-collapsing theorem,
and so on.

Considering the above two flows in mathematical sense, if we can prove their long-time
convergence under some circumstances, then the limit of connection Ricci flow may be a manifold
with constant curvature and torsion, and the limit of Ricci harmonic flow may be a manifold
with constant curvature and a harmonic map from it to another manifold. But analogous to
Ricci flow, in order to study the convergence of long-time solution, a problem that must
be solved is the singularity. Unfortunately, so far, there is not very deep research about
the property of singularities of the two flows. Even the natural classification of the
singularities has not been obtained. In this paper, we relate the specific properties of the
two flows to the work by Hamilton in Ricci flow. Through introducing a new curvature quantity
and using it to establish the singularity models, we successfully classify the singularities
of connection Ricci flow and Ricci harmonic flow into three relatively simple forms ---
ancient solutions, eternal solutions and immortal solutions. In addition, for Ricci harmonic
flow, we combine the finite-time singularity to soliton which can be seen as the self-similar
solution of the flow and also do some research about the ancient solutions. The work in this
paper can be helpful in understanding the long-time behavior of the two flows. In the end, we 
want to point out that our method here may also apply to the singularity problem of some other 
flows.

\subsection{Structure and main results}

Now we introduce the organization of the paper and state the main results.

Section 2 is a general review of connection Ricci flow (mainly in [St], [Li])
and Ricci harmonic flow (mainly in [Mue], [List]).

Section 3 is about the singularities of connection Ricci flow. In \S3.1, we give the
definition of maximum solution regarding the unboundedness of $|Rm|$. In \S3.2, we define
a new curvature quantity $P$ and use it to classify the singularities into three types.
Then we establish three singularity models and prove the convergence theorem.

\begin{theorem}
For any maximal solution to the connection Ricci flow which satisfies the
injectivity radius estimate and is of Type {\rm \uppercase\expandafter{\romannumeral 1}},
{\rm \uppercase\expandafter{\romannumeral 2}a}, {\rm \uppercase\expandafter{\romannumeral 2}b},
or {\rm \uppercase\expandafter{\romannumeral 3}}, there exists a sequence of dilations of
the solution which converges in the $C^\infty_{\rm loc}$ topology to a singularity model
of the corresponding type.
\end{theorem}

Section 4 is about the singularities of Ricci harmonic flow. In \S4.1, we also define three
types of singularities and three singularity models by a newly-defined curvature quantity
$Q$ and get the following conclusion.

\begin{theorem}
For any maximal solution to Ricci harmonic flow which satisfies the injectivity
radius estimate and is of Type {\rm \uppercase\expandafter{\romannumeral 1}},
{\rm \uppercase\expandafter{\romannumeral 2}a}, {\rm \uppercase\expandafter{\romannumeral 2}b},
or {\rm \uppercase\expandafter{\romannumeral 3}}, there exists a sequence of dilations of
the solution which converges in the $C^\infty_{\rm loc}$ topology to a singularity model
of the corresponding type.
\end{theorem}

In \S4.2, we prove a theorem concerning the finite-time singularity and shrinking soliton on
closed manifolds.

\begin{theorem}
Let
$(g(t),\phi(t))_{t\in[0,T)}$, be a maximal solution to the Ricci
harmonic flow (1.2) on a closed manifold $M^m$ with singular time
$T<\infty$. Let $t_k\to T$ be a sequence of times such that
$Q_k=Q(p_k,t_k)\to\infty$. If the rescaled sequence
$(M^m,Q_kg(t_k+Q_k^{-1}t),\phi(t_k+Q_k^{-1}t),p_k)$ converges in the
$C^\infty$ sense to a closed ancient solution
$(M^m_\infty,g_\infty(t),\phi_\infty(t),p_{\infty})$ to the Ricci
harmonic flow, then $(g_\infty(t),\phi_\infty(t))$ must be a
shrinking Ricci harmonic soliton.
\end{theorem}

In the third part, we study the ancient solution of Ricci harmonic flow by dealing with
the compact case and noncompact case respectively.

\begin{theorem}
Let $(g(t),\phi(t))$ be an ancient solution to Ricci harmonic flow (1.2) on manifold
$M^m$, then for any $t$ such that the solution exists, we have $S=R-\alpha|\nabla\phi|^2\ge0$.
That is, the scalar curvature is always nonnegative.
\end{theorem}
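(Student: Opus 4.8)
The plan is to reproduce, in the coupled setting, Hamilton's classical argument that a complete ancient Ricci flow has nonnegative scalar curvature, with the role of the scalar curvature played by $S=R-\alpha|\nabla\phi|^2$. The key tool is the evolution equation for $S$ established in Section~2 (following [Mue], [List]):
\[
\frac{\partial}{\partial t}S \;=\; \Delta S + 2\,|S_{ij}|^{2} + 2\alpha\,|\tau_{g}\phi|^{2},
\]
where $S_{ij}=R_{ij}-\alpha\nabla_{i}\phi\,\nabla_{j}\phi$ is the symmetric $2$-tensor driving the flow and $S=g^{ij}S_{ij}$. Since $\alpha\ge 0$ the last term is nonnegative, and the Cauchy--Schwarz inequality for the trace gives $|S_{ij}|^{2}\ge\frac{1}{m}\bigl(g^{ij}S_{ij}\bigr)^{2}=\frac{1}{m}S^{2}$. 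Hence
\[
\frac{\partial}{\partial t}S \;\ge\; \Delta S + \frac{2}{m}\,S^{2},
\]
which is exactly the differential inequality obeyed by the scalar curvature under Ricci flow (with $n$ replaced by $m$); the harmonic map data enters only through terms of favourable sign, so the rest of the analysis is formally the same as in Ricci flow.

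In the compact case, put $S_{\min}(t)=\min_{M}S(\cdot,t)$. The maximum principle applied to the inequality above yields $\frac{d}{dt}S_{\min}\ge\frac{2}{m}S_{\min}^{2}$ in the sense of forward difference quotients. Suppose for contradiction that $S_{\min}(t_{0})=-c<0$ at some time $t_{0}$. Since the right-hand side is nonnegative, $S_{\min}$ is nondecreasing, so $S_{\min}(t)\le -c$ for all $t\le t_{0}$, and the function $h(t):=-1/S_{\min}(t)$ is well defined, positive, and satisfies $h'(t)\ge 2/m$. Integrating backwards, $h(t)\le h(t_{0})-\frac{2}{m}(t_{0}-t)=\frac{1}{c}-\frac{2}{m}(t_{0}-t)$, which is negative once $t<t_{0}-\frac{m}{2c}$. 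Because the solution is ancient it certainly exists at such times, and this contradicts $h>0$. Hence $S_{\min}(t)\ge 0$ for all $t$, settling the compact case.

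In the noncompact case the infimum of $S$ need not be attained, so the maximum principle must be localized. If one is willing to assume that the ancient solution is complete with curvature and $|\nabla\phi|$ bounded on compact time intervals, then $S$ is uniformly bounded below by some $-c_{0}<0$, and the weak maximum principle for complete manifolds (comparison with the ODE $\dot{\rho}=\frac{2}{m}\rho^{2}$) gives, for any $t_{1}<t_{0}$, the bound $S(\cdot,t_{0})\ge\rho(t_{0})$ where $\rho(t_{1})=-c_{0}$, i.e. $1/\rho(t_{0})=-1/c_{0}-\frac{2}{m}(t_{0}-t_{1})$; letting $t_{1}\to-\infty$ forces $\rho(t_{0})\to 0^{-}$, hence $S(\cdot,t_{0})\ge 0$. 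To remove the curvature hypothesis one instead runs a localized version of this argument in the spirit of Bing-Long Chen's treatment of ancient Ricci flows: fix $p\in M$ and $t_{0}$, multiply $S+\varepsilon$ by a cutoff $\varphi=\varphi\bigl(d_{g(t_{0})}(p,\cdot)\bigr)$ supported on $B_{g(t_{0})}(p,r)$, examine the space-time minimum of $\varphi\,(S+\varepsilon)$ on $B\times[t_{0}-\theta,t_{0}]$, and at an interior minimum combine the evolution inequality for $S$ with the Laplacian comparison for $d_{g(t_{0})}$ and the bound on $\partial_{t}d_{g(t)}$, using the local curvature estimate furnished by the flow. Sending $\varepsilon\to 0$ and then $r\to\infty$ recovers $S\ge 0$ on all of $M\times(-\infty,T)$.

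The main obstacle is the noncompact case. The elementary ODE-comparison reasoning only uses that $S$ satisfies the reaction--diffusion inequality and is bounded below; making it work \emph{without} a curvature bound requires the localized maximum principle, where the delicate points are controlling the distortion of distances under the coupled flow (the extra term $-2\alpha\nabla\phi\otimes\nabla\phi$ in $\partial_{t}g$ must be shown not to spoil the distance estimates), tracking the constants in the local curvature bounds, and ensuring the superlinear reaction term $\frac{2}{m}S^{2}$ is absorbed by the cutoff rather than obstructing the limit $r\to\infty$. Everything else — the evolution equation, the Cauchy--Schwarz step, and the compact argument — is routine.
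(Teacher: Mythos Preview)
Your compact-case argument is essentially identical to the paper's: both use the evolution inequality $\partial_t S\ge\Delta S+\frac{2}{m}S^2$ (from $|S_{ij}|^2\ge\frac{1}{m}S^2$ and $\alpha|\tau_g\phi|^2\ge 0$), then derive a contradiction from the ODE for $S_{\min}$ on an ancient interval.

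For the noncompact case you correctly point to Chen's localized maximum principle, and this is exactly what the paper does; but two points of your sketch differ from the paper's implementation and are worth flagging. First, the paper builds the cutoff from the \emph{time-dependent} distance, setting $u(x,t)=\psi\bigl(d_t(x_0,x)/(Ar_0)\bigr)S(x,t)$, and computes $(\partial_t-\Delta)u$ using a Perelman-type estimate for $(\partial_t-\Delta)d_t$ (the paper's Lemma~4.13); using the frozen distance $d_{g(t_0)}$ as you suggest would force you to track ball distortion separately. Second --- and this addresses your last paragraph directly --- the extra term $-2\alpha\nabla\phi\otimes\nabla\phi$ in $\partial_t g$ does \emph{not} need to be ``shown not to spoil the distance estimates'': it has the favorable sign. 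Along a minimizing geodesic with unit tangent $X$,
\[
\partial_t d_t=-\int_\gamma\bigl(\mathrm{Rc}-\alpha\nabla\phi\otimes\nabla\phi\bigr)(X,X)\,ds
=-\int_\gamma\mathrm{Rc}(X,X)\,ds+\alpha\int_\gamma|\nabla_X\phi|^2\,ds,
\]
so the coupled flow satisfies the \emph{same} lower bound $(\partial_t-\Delta)d_t\ge -(m-1)\bigl(\tfrac{2}{3}Kr_0+r_0^{-1}\bigr)$ as pure Ricci flow, with no $\phi$-dependent error term. Once this sign is observed, Chen's argument goes through verbatim: one obtains a local lower bound $S\ge\min\{-m/(t+K_A^{-1}),\,-C/(Ar_0^2)\}$ on $B_t(x_0,\tfrac{3}{4}Ar_0)$, then shifts the initial time to $-\infty$ and sends $A\to\infty$. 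So your worry about the obstacle is misplaced; the harmonic-map contribution only helps.
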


\noindent{\bf Acknowledgements: }This paper is based on the author's master degree thesis in Zhejiang
University. The author would like to thank his adviser, Professor Weimin Sheng, for a number of
helpful talks and encouragements. The author also feels very grateful to the teachers and students
in Zhejiang University for what he learnt from them.

\section{Preliminaries}

In this section, we collect and derive some results about connection Ricci flow and Ricci
harmonic flow. Except for Theorem 2.9, all the results have been well known before.

\subsection{Connection Ricci flow}

Connection Ricci flow is a generalization of Ricci flow to connections with torsion and
can be seen as a special case of renormalization group flow which has a physical background.
Here we give some main results that will be used in this paper and we suggest the readers to
refer [St] and [Li] for more details.

In a Riemannian manifold $(M,g)$, a general connection is defined as
\[
\tau (X,Y)=\nabla_X Y-\nabla_Y X-[X,Y].
\]
In particular, when $\tau=0$, it is the usual Levi-Civita connection. In [St], Streets
let the torsion be \emph{geometric} (that is, $g_{kl}\tau_{ij}^k$ is a 3-form and $d\tau=0$)
and study this kind of torsions. For a geometric torsion $\tau$, after computation, its
curvature tensor is as following
\begin{eqnarray}
R^h_{jkl}\!\!\!&=&\!\!\!\widetilde{R}^h_{jkl}
+\frac{1}{2}\bigg(\frac{\partial \tau^h_{lj}}{\partial
x^k}-\frac{\partial \tau^h_{kj}}{\partial x^l}\bigg)
+\frac{1}{2}(\widetilde{\Gamma}^h_{kp}\tau^p_{lj}+\tau^h_{kp}\widetilde{\Gamma}^p_{lj})
\nonumber \\
\!\!\!&&\!\!\!+\frac{1}{4}\tau^h_{kp}\tau^p_{lj}
-\frac{1}{2}(\widetilde{\Gamma}^h_{lp}\tau^p_{kj}+\tau^h_{lp}\widetilde{\Gamma}^p_{kj})
-\frac{1}{4}\tau^h_{lp}\tau^p_{kj},
\end{eqnarray}
where quantities with ¡°$\thicksim$¡± are in Levi-Civita connection. From this, the Ricci
curvature tensor can be got. It is not symmetric, but has the following symmetric part
and skew-symmetric part
\begin{eqnarray}
{\rm Rc}^\otimes\!\!\!&=&\!\!\!{\rm Rc}-\frac{1}{4}\mathcal{H},
\nonumber \\
{\rm Rc}^\wedge\!\!\!&=&\!\!\!-\frac{1}{2}d^\ast \tau.
\end{eqnarray}

With these preparations, we can consider the connection Ricci flow
\begin{eqnarray*}
\frac{\partial}{\partial t}g\!\!\!&=&\!\!\!-2{\rm Rc}^\otimes,\\
\frac{\partial}{\partial t}\tau\!\!\!&=&\!\!\!2d{\rm Rc}^\wedge.
\end{eqnarray*}
In another form, it is
\begin{eqnarray*}
\frac{\partial}{\partial t}g\!\!\!&=&\!\!\!-2{\rm Rc}+\frac{1}{2}\mathcal{H},\\
\frac{\partial}{\partial t}\tau\!\!\!&=&\!\!\!\Delta _{\rm LB}\tau.
\end{eqnarray*}
Throughout this paper, we use notation $H$ instead of $\tau$. Then the connection Ricci
flow equation is exactly (1.1).

[St] and [Li] studied connection Ricci flow equation using different methods. The former
viewed the equation as the Ricci flow in general manifolds with torsion while the
latter considered the equation in torsion-free manifolds and regarded $H$ as a separated
3-form. Both methods serve to get some properties about the flow including short-time
existence of solution, evolution equations, compactness theorem and some functionals.
In this paper, we adopt Li's method.

The evolution equations under connection Ricci flow are

\begin{proposition}
Under (1.1),
\begin{eqnarray}
\frac{\partial}{\partial t}R_{ijkl}\!\!\!&=&\!\!\!\Delta
R_{ijkl}+2(B_{ijkl}-B_{ijlk}-B_{iljk}+B_{ikjl})
\nonumber \\
\!\!\!& &\!\!\!
-(R_{pjkl}R_{pi}+R_{ipkl}R_{pj}+R_{ijpl}R_{pk}+R_{ijkp}R_{pl})
\nonumber \\
\!\!\!& &\!\!\!
-\frac{1}{4}[\nabla_i\nabla_k\mathcal{H}_{jl}-\nabla_i\nabla_l\mathcal{H}
_{jk}-\nabla_j\nabla_k\mathcal{H}_{il}+\nabla_j\nabla_l\mathcal{H}_{ik}]
\nonumber \\
\!\!\!& &\!\!\!
+\frac{1}{4}[R_{ijkp}\mathcal{H}_{pl}+R_{ijpl}\mathcal{H}_{pk}], \\
\frac{\partial}{\partial t}R_{ij}\!\!\!&=&\!\!\!\Delta
R_{ij}+2R_{piqj}R_{pq}-2R_{pi}R_{pj}-\frac{1}{4}[
R_{piqj}\mathcal{H}_{pq}-R_{pi}\mathcal{H}_{pj}]
\nonumber \\
\!\!\!& &\!\!\!
-\frac{1}{4}\big[\nabla_i\nabla_j|H|^2-\nabla_i\nabla_p\mathcal{H}
_{pj}-\nabla_p\nabla_j\mathcal{H}_{ip}+\Delta\mathcal{H}_{ij}\big], \\
\frac{\partial}{\partial
t}R\!\!\!&=&\!\!\!\Delta R+2|{\rm Rc}|^2-\frac{1}{2} \langle {\rm
Rc},\mathcal{H}\rangle-\frac{1}{2}\Delta|H|^2
+\frac{1}{2}g^{ik}g^{jl}\nabla_i\nabla_j\mathcal{H}_{kl}, \\
\frac{\partial}{\partial
t}\mathcal{H}_{ij}\!\!\!&=&\!\!\!2\langle\Delta_{\rm
LB}H_{ikl},H_{jkl}\rangle+4\langle
R_{ln}-\frac{1}{4}\mathcal{H}_{ln},H_{ikl}H_{jkn}\rangle, \\
\frac{\partial}{\partial
t}|H|^2\!\!\!&=&\!\!\!2\langle\Delta_{\rm LB}H,H\rangle+6\langle{\rm
Rc},\mathcal{H}\rangle-\frac{3}{2}|\mathcal{H}|^2,
\end{eqnarray}
where $B_{ijkl}=R_{piqj}R_{pkql}$.
\end{proposition}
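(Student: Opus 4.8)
The plan is to treat the metric equation of (1.1) as a general variation $\partial_t g_{ij}=v_{ij}$ with $v_{ij}=-2R_{ij}+\frac12\mathcal{H}_{ij}$, substitute this $v$ into the classical first-variation formulas for the Christoffel symbols, the Riemann tensor, the Ricci tensor and the scalar curvature, and then simplify, treating the Ricci part $-2R_{ij}$ and the torsion part $\frac12\mathcal{H}_{ij}$ separately. Two elementary facts will be used repeatedly: $\partial_t g^{ij}=-v^{ij}=2R^{ij}-\frac12\mathcal{H}^{ij}$, and the evolution $\partial_t H=\Delta_{\rm LB}H$ acts on the components $H_{ijk}$ with all three indices down, so that no metric is differentiated inside $\partial_t H_{ijk}$; I would also keep the contracted second Bianchi identity $\nabla^jR_{ij}=\frac12\nabla_iR$ at hand.

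For (2.5) I would start from $\partial_t R^h{}_{jkl}=\nabla_k(\partial_t\Gamma^h_{jl})-\nabla_l(\partial_t\Gamma^h_{jk})$ together with $\partial_t\Gamma^k_{ij}=\frac12 g^{kl}(\nabla_iv_{jl}+\nabla_jv_{il}-\nabla_lv_{ij})$, then lower the first index with $g_{hi}$, remembering the extra term $(\partial_t g_{hi})R^h{}_{jkl}=v_{hi}R^h{}_{jkl}$. Putting $v_{ij}=-2R_{ij}$ reproduces Hamilton's computation verbatim: the second covariant derivatives of ${\rm Rc}$ that appear get converted, via the second Bianchi identity and the commutation of covariant derivatives, into $\Delta R_{ijkl}$ together with the quadratic curvature expression $2(B_{ijkl}-B_{ijlk}-B_{iljk}+B_{ikjl})-(R_{pjkl}R_{pi}+R_{ipkl}R_{pj}+R_{ijpl}R_{pk}+R_{ijkp}R_{pl})$ with $B_{ijkl}=R_{piqj}R_{pkql}$. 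Putting $v_{ij}=\frac12\mathcal{H}_{ij}$ gives $\partial_t\Gamma^h_{jl}=\frac14 g^{hm}(\nabla_j\mathcal{H}_{lm}+\nabla_l\mathcal{H}_{jm}-\nabla_m\mathcal{H}_{jl})$; the antisymmetrization in $k,l$ then produces the block $-\frac14[\nabla_i\nabla_k\mathcal{H}_{jl}-\nabla_i\nabla_l\mathcal{H}_{jk}-\nabla_j\nabla_k\mathcal{H}_{il}+\nabla_j\nabla_l\mathcal{H}_{ik}]$, and exchanging the two covariant derivatives via the Ricci identity leaves exactly the curvature terms $\frac14[R_{ijkp}\mathcal{H}_{pl}+R_{ijpl}\mathcal{H}_{pk}]$. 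Adding the two contributions yields (2.5).

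Equation (2.6) I would obtain by contracting (2.5) with $g^{ik}$ and adding the correction $(\partial_t g^{ik})R_{ijkl}=(2R^{ik}-\frac12\mathcal{H}^{ik})R_{ijkl}$, then simplifying with the Bianchi identities (alternatively, one may apply the Lichnerowicz--Laplacian form of $\partial_t{\rm Rc}$ directly to $v$). Equation (2.7) is quickest from $\partial_t R=-\Delta({\rm tr}_g v)+{\rm div}\,{\rm div}\,v-\langle v,{\rm Rc}\rangle$: since ${\rm tr}_g v=-2R+\frac12|H|^2$ and ${\rm div}\,{\rm div}\,{\rm Rc}=\nabla^i\nabla^jR_{ij}=\frac12\Delta R$, the $-2{\rm Rc}$ part contributes $\Delta R+2|{\rm Rc}|^2$ and the $\frac12\mathcal{H}$ part contributes $-\frac12\langle{\rm Rc},\mathcal{H}\rangle-\frac12\Delta|H|^2+\frac12 g^{ik}g^{jl}\nabla_i\nabla_j\mathcal{H}_{kl}$. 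For (2.8) I would differentiate $\mathcal{H}_{ij}=g^{pq}g^{rs}H_{ipr}H_{jqs}$ by the Leibniz rule: the two terms in which a metric is differentiated combine, using $\partial_t g^{pq}=2R^{pq}-\frac12\mathcal{H}^{pq}$ and the full antisymmetry of $H$, into $4\langle R_{ln}-\frac14\mathcal{H}_{ln},H_{ikl}H_{jkn}\rangle$, while the two terms in which an $H$ is differentiated give $2\langle\Delta_{\rm LB}H_{ikl},H_{jkl}\rangle$. Finally (2.9) follows by tracing (2.8) with $g^{ij}$ (so that $g^{ij}H_{ikl}H_{jkn}=\mathcal{H}_{ln}$, again by antisymmetry of $H$) and adding $(\partial_t g^{ij})\mathcal{H}_{ij}=2\langle{\rm Rc},\mathcal{H}\rangle-\frac12|\mathcal{H}|^2$, which gives $2\langle\Delta_{\rm LB}H,H\rangle+6\langle{\rm Rc},\mathcal{H}\rangle-\frac32|\mathcal{H}|^2$.

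The routine but delicate step is (2.5): the obstacle there is purely bookkeeping --- getting every sign right, respecting the symmetrizations dictated by the curvature symmetries, and tracking each commutator term generated when a pair of covariant derivatives is exchanged. Once (2.5) is secured, (2.6) is a contraction and (2.7), (2.8), (2.9) are short Leibniz-rule computations, so the only thing requiring real care is the Riemann-tensor evolution.
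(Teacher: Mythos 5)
The paper does not actually prove Proposition 2.1: it states in \S2 that, apart from Theorem 2.9, all preliminary results are quoted from the literature (here [St] and [Li]), so there is no in-paper argument to compare against. Your proposal is the standard derivation those references use, and it is correct in outline: treat $\partial_t g=v$ with $v=-2\mathrm{Rc}+\tfrac12\mathcal{H}$, feed $v$ into the first-variation formulas for $\Gamma$, $Rm$, $\mathrm{Rc}$, $R$, and handle the $-2\mathrm{Rc}$ and $\tfrac12\mathcal{H}$ pieces separately; the $H$-equations then follow from the Leibniz rule since $\partial_t H_{ijk}=(\Delta_{\rm LB}H)_{ijk}$ differentiates no metric. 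Your coefficient checks are right where they can be verified directly: the formula $\partial_tR=-\Delta(\mathrm{tr}_gv)+\mathrm{div}\,\mathrm{div}\,v-\langle v,\mathrm{Rc}\rangle$ reproduces (2.7) exactly, and the trace of (2.8) with the correction $(\partial_tg^{ij})\mathcal{H}_{ij}=2\langle\mathrm{Rc},\mathcal{H}\rangle-\tfrac12|\mathcal{H}|^2$ gives precisely the $6\langle\mathrm{Rc},\mathcal{H}\rangle-\tfrac32|\mathcal{H}|^2$ of (2.9). Two small points of care: in (2.8) the two differentiated-$H$ terms literally give $\langle\Delta_{\rm LB}H_{ikl},H_{jkl}\rangle+\langle H_{ikl},\Delta_{\rm LB}H_{jkl}\rangle$, which agrees with the paper's $2\langle\Delta_{\rm LB}H_{ikl},H_{jkl}\rangle$ only after symmetrizing in $i,j$ (harmless, since $\mathcal{H}_{ij}$ is symmetric, but worth stating); and, as you note yourself, the signs in the $\mathcal{H}$-block of (2.5) and the curvature-correction terms depend on the chosen convention for $R_{ijkl}$, so that step genuinely requires the bookkeeping you flag.
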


By (2.7) and $\Delta_{\rm LB}H=\Delta H+Rm\ast H$, we get

\begin{theorem}
Let $(M^n,g(x,t),H(x,t))_{t\in[0,T]}$ be a complete solution to connection Ricci
flow, and $K_1,K_2$ are arbitrary given nonnegative constants. If
\[
\sup_{M^n\times[0,T]}|Rm(x,t)|_{g(x,t)}\le K_1,\quad
\sup_{M^n}|H(x,0)|^2_{g(x,0)}\le K_2
\]
for all $x\in M^n$ and $t\in[0,T]$, then there exists a constant $C_n$ depending only
on $n$ such that
\[
\sup_{M^n\times[0,t]}|H(x,t)|^2_{g(x,t)}\le K_2{\rm e}^{C_nK_1T}
\]
for all $x\in M^n$ and $t\in[0,T]$.
\end{theorem}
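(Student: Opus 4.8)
The plan is to derive a differential inequality for $f(x,t)=|H(x,t)|^2_{g(x,t)}$ and apply the maximum principle. Starting from the evolution equation (2.8) for $|H|^2$ and substituting $\Delta_{\rm LB}H=\Delta H+Rm\ast H$, I would first rewrite the term $2\langle\Delta_{\rm LB}H,H\rangle$ as $2\langle\Delta H,H\rangle + 2\langle Rm\ast H,H\rangle$, and then use the Bochner-type identity $2\langle\Delta H,H\rangle = \Delta|H|^2 - 2|\nabla H|^2$ to obtain
\begin{equation}
\frac{\partial}{\partial t}|H|^2 = \Delta|H|^2 - 2|\nabla H|^2 + 2\langle Rm\ast H,H\rangle + 6\langle{\rm Rc},\mathcal{H}\rangle - \frac{3}{2}|\mathcal{H}|^2. \nonumber
\end{equation}
The terms $2\langle Rm\ast H,H\rangle$ and $6\langle{\rm Rc},\mathcal{H}\rangle$ are each bounded in absolute value by $c_n|Rm|\,|H|^2$ (using that $|{\rm Rc}|\le c_n|Rm|$ and $|\mathcal{H}|\le c_n|H|^2$ pointwise, and that $\mathcal{H}$ is quadratic in $H$), while $-2|\nabla H|^2\le 0$ and $-\frac{3}{2}|\mathcal{H}|^2\le 0$ can simply be discarded. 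This gives the scalar inequality
\begin{equation}
\frac{\partial}{\partial t}|H|^2 \le \Delta|H|^2 + C_n K_1\,|H|^2 \nonumber
\end{equation}
on $M^n\times[0,T]$, where $C_n$ absorbs all the dimensional constants and we have used $|Rm|\le K_1$.

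Next I would invoke the maximum principle for the heat operator on complete manifolds. Set $\varphi(t)=K_2\,{\rm e}^{C_nK_1 t}$, which solves $\varphi'(t)=C_nK_1\varphi(t)$ with $\varphi(0)=K_2\ge\sup_M|H(\cdot,0)|^2$. Then $u(x,t)=|H(x,t)|^2-\varphi(t)$ satisfies $\partial_t u\le\Delta u + C_nK_1\,u$ with $u(\cdot,0)\le 0$. By the maximum principle (the version for the weighted heat inequality on complete manifolds, valid here because the solution is complete with bounded curvature, so one can apply the standard comparison argument of Ecker--Huisken or the barrier construction in Chow--Lu--Ni), we conclude $u\le 0$ on $M^n\times[0,T]$, i.e. $|H(x,t)|^2\le K_2\,{\rm e}^{C_nK_1 t}\le K_2\,{\rm e}^{C_nK_1 T}$, which is exactly the claimed estimate (after possibly renaming $C_n$).

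The main obstacle is twofold. First, one must be careful in justifying the $Rm\ast H$ notation: the schematic product hides contractions, so I would need to check that the $C^0$ norm of the contraction is indeed controlled by $|Rm|\,|H|^2$ with a purely dimensional constant — this is routine but requires writing the bilinear pairing explicitly. Second, and more substantively, since $M^n$ need not be compact, the maximum principle is not automatic; one needs either a completeness-plus-bounded-curvature hypothesis (which is in force, since the solution is assumed complete and $|Rm|\le K_1$) together with an a priori growth bound on $|H|^2$, or a localized cutoff argument. The cleanest route is to cite the standard maximum principle for scalar heat-type inequalities on complete manifolds with bounded geometry; alternatively, a cutoff-function argument combined with the bound on $|\nabla H|$ coming from Shi-type estimates would work, but invoking the known comparison principle is the expected move here and keeps the proof short.
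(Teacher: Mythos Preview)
Your argument is correct and matches the paper's approach exactly: the paper simply says ``By (2.7) and $\Delta_{\rm LB}H=\Delta H+Rm\ast H$, we get'' the theorem, which is precisely the derivation you spell out---rewrite the evolution of $|H|^2$ via the Bochner identity, bound the curvature-coupling terms by $C_n|Rm|\,|H|^2$, drop the good-sign terms, and apply the maximum principle against the ODE $\varphi'=C_nK_1\varphi$. (Your equation reference should be (2.7) rather than (2.8), and your discussion of the noncompact maximum principle is more careful than the paper's, but the substance is identical.)
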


The derivative estimates

\begin{theorem}
Let $(M^n,g(x,t),H(x,t))$ be a complete solution to connection Ricci
flow, and $K$ is an arbitrary given positive constant. Then for each $\beta>0$ and each
integer $l\ge 1$ there exists a constant $C_l$ depending only on $l,n,\max{\{\beta,1\}}$
and $K$ such that if
\[
|Rm(x,t)|_{g(x,t)}\le K,\quad |H(x,0)|_{g(x,0)}^2\le K
\]
for all $x\in M^n$ and $t\in[0,\frac{\beta}{K}]$, then
\[
|\nabla^{l-1}Rm(x,t)|_{g(x,t)}+|\nabla^l H(x,t)|_{g(x,t)}\le
\frac{C_l}{t^{l/2}}
\]
for all $x\in M^n$ and $t\in(0,\frac{\beta}{K}]$.
\end{theorem}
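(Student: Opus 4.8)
The plan is to prove these Shi-type estimates by a Bernstein maximum-principle argument run simultaneously on the curvature tensor and on the $3$-form $H$, treating $(Rm,H)$ as a single coupled parabolic system. Before differentiating anything I would first upgrade the hypotheses: on $[0,\beta/K]$ we have $|Rm|\le K$ and $|H(\cdot,0)|^2\le K$, so Theorem 2.7 immediately yields $|H(x,t)|^2\le K\,{\rm e}^{C_n\beta}=:\bar K$ for all $x\in M^n$, $t\in[0,\beta/K]$. Hence throughout the argument both $|Rm|$ and $|H|$ are bounded by a constant depending only on $n,\beta,K$, and the task reduces to controlling the higher covariant derivatives $\nabla^{l-1}Rm$ and $\nabla^l H$.

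Next I would write down the reaction--diffusion inequalities. Commuting $\nabla^k$ through the evolution equations of Proposition 2.4 and through $\partial_t H=\Delta_{\rm LB}H=\Delta H+Rm\ast H$, and noting that $\partial_t g=-2{\rm Rc}+\tfrac12\mathcal H$ perturbs the Christoffel symbols only by terms of the form $\ast\,(\nabla Rm+\nabla(H\ast H))$, one obtains schematically
\begin{eqnarray*}
\frac{\partial}{\partial t}|\nabla^k Rm|^2 &\le& \Delta|\nabla^k Rm|^2-2|\nabla^{k+1}Rm|^2
+C\!\!\sum_{i+j=k}\!\!|\nabla^i Rm|\,|\nabla^j Rm|\,|\nabla^k Rm|\\
&&{}+C\!\!\sum_{i+j=k+2}\!\!|\nabla^i H|\,|\nabla^j H|\,|\nabla^k Rm|,\\
\frac{\partial}{\partial t}|\nabla^k H|^2 &\le& \Delta|\nabla^k H|^2-2|\nabla^{k+1}H|^2
+C\!\!\sum_{i+j=k}\!\!|\nabla^i Rm|\,|\nabla^j H|\,|\nabla^k H|,
\end{eqnarray*}
where the second sum in the first line comes from the $\nabla\nabla\mathcal H$ and $\Delta\mathcal H$ terms in (2.5) with $\mathcal H=H\ast H$. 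After invoking the bounds $|Rm|,|H|\le\bar K$ from the first step, the only summands not of strictly lower order are the top ones, $|\nabla^{k+2}H|\,|H|\,|\nabla^k Rm|$ and $|\nabla^{k+1}Rm|\,|H|\,|\nabla^k H|$.

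I would then prove by induction on $l\ge1$ that $\sup_{M^n}\big(|\nabla^{l-1}Rm|^2+|\nabla^l H|^2\big)\le C_l^2/t^l$ on $(0,\beta/K]$. The base case is the model computation: $|Rm|\le K$ already gives $|Rm|^2\le K\beta/t$ for $t\le\beta/K$, while for $\nabla H$ one applies the maximum principle to $F=t|\nabla H|^2+A|H|^2$ --- for $A$ large the term $-2A|\nabla H|^2$ absorbs the $|\nabla H|^2$ coming from $\partial_t t$ together with the bounded cross terms, giving $\partial_t F\le\Delta F+C$ and hence $|\nabla H|^2\le C/t$. For the inductive step one forms, with weights $\theta,A>0$ to be chosen,
\[
F=t^l\big(|\nabla^{l-1}Rm|^2+\theta\,|\nabla^l H|^2\big)+A\sum_{j=0}^{l-1}t^j\big(|\nabla^{j-1}Rm|^2+|\nabla^j H|^2\big)
\]
(the $j=0$ curvature term absent, the $j=0$ $H$-term being $|H|^2$), bounds every term of order $<l-1$ in $Rm$ or $<l$ in $H$ by the inductive hypothesis, and uses Cauchy--Schwarz to push each dangerous cross term into a good gradient term: $|\nabla^{l+1}H|\,|H|\,|\nabla^{l-1}Rm|\le\varepsilon|\nabla^{l+1}H|^2+C_\varepsilon|\nabla^{l-1}Rm|^2$ with $\varepsilon<2\theta$ against $-2\theta t^l|\nabla^{l+1}H|^2$, and $|\nabla^l Rm|\,|H|\,|\nabla^l H|\le\varepsilon'|\nabla^l Rm|^2+C_{\varepsilon'}|\nabla^l H|^2$ against $-2t^l|\nabla^l Rm|^2$; then one takes $A$ large so that the $A$-sum dominates the residual multiples of the $(l-1)$-st order quantities. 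This gives $\partial_t F\le\Delta F+C(l,n,\beta,K)$, while the weights $t^l,t^j$ keep $F$ bounded as $t\to0$ (they cancel the blow-up of the lower derivatives supplied by the inductive hypothesis, and the lowest terms start from the controlled data $|H(\cdot,0)|^2$). On a complete noncompact $M^n$ the maximum principle is applied in Shi's localized form --- multiply $F$ by a cutoff supported in a large metric ball, carry the extra $r^{-2}$ error, and let $r\to\infty$ --- which is legitimate since the geometry is bounded. Unwinding, $t^l|\nabla^{l-1}Rm|^2\le C'$ and $t^l|\nabla^l H|^2\le C'/\theta$, which is the assertion at level $l$.

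The hard part will be the coupled bookkeeping in the inductive step. Because $\partial_t{\rm Rc}$ contains second derivatives of $H$ and $\partial_t H$ contains $Rm\ast H$, differentiating $l-1$ (respectively $l$) times injects an order-$(l+1)$ term in $H$ into the curvature equation and an order-$l$ term in $Rm$ into the $H$-equation; one must design $F$ so that the good negative gradient terms of \emph{both} hierarchies are present with correctly matched relative weight $\theta$ and absorb these two cross terms simultaneously, and then verify that no remaining product of lower-order derivatives spoils the induction. Everything else --- the commutation identities, the $|H|$ bound, the base case, and the localized maximum principle --- is routine and parallels Shi's original treatment of Ricci flow.
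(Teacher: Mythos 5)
This statement (Theorem 2.3) is quoted in the paper as a known preliminary from [Li] and [Zheng]; the paper supplies no proof of its own, so I can only judge your argument against the standard treatment, which it matches in strategy: a Shi--Bernstein maximum-principle induction on the coupled system $(Rm,H)$, with the preliminary $|H|^2\le K{\rm e}^{C_n\beta}$ from Theorem 2.2, time-weighted test functions, and the two top-order cross terms ($\nabla^{l+1}H\ast H\ast\nabla^{l-1}Rm$ from the $\nabla^2\mathcal H$ part of the curvature evolution, and $\nabla^{l}Rm\ast H\ast\nabla^{l}H$ from $\Delta_{\rm LB}H=\Delta H+Rm\ast H$) absorbed by the good gradient terms of the \emph{other} hierarchy. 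That matching of the two hierarchies via the weight $\theta$ is exactly the point, and your inductive functional is set up correctly for it.

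The one concrete defect is your base case as written. You propose $F=t|\nabla H|^2+A|H|^2$ and claim the cross terms are ``bounded,'' but the evolution of $|\nabla H|^2$ contains $\nabla Rm\ast H\ast\nabla H$, and $|\nabla Rm|$ is \emph{not} a priori bounded at level $l=1$ --- it is an $l=2$ quantity. The term $-2A|\nabla H|^2$ cannot absorb $Ct|\nabla Rm|\,|H|\,|\nabla H|$. The fix is already implicit in your general functional: take the $l=1$ instance $F=t\bigl(|Rm|^2+\theta|\nabla H|^2\bigr)+A|H|^2$, so that the evolution of $t|Rm|^2$ supplies $-2t|\nabla Rm|^2$, against which $\theta t|\nabla Rm|\,|H|\,|\nabla H|$ is absorbed by Cauchy--Schwarz (the leftover $C\theta^2t|H|^2|\nabla H|^2$ being a harmless multiple of $F$ on the bounded time interval). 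With that correction the base case goes through and the induction is sound; the remaining schematic imprecision in the commutator terms (the $H\ast H$ contribution to $\partial_t\Gamma$ produces additional products $\nabla^{i_1}H\ast\nabla^{i_2}H\ast\nabla^jT$) is lower order and does not affect the absorption scheme.
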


In addition, [Zheng] gives the locally derivative estimate.

Then we show the compactness theorem.

\begin{theorem}
Let $(M_k^n,g_k(t),H_k(t),p_k)$ be a sequence of
complete pointed solutions to connection Ricci flow for $t\in(A,\Omega)\ni0$, such
that
\begin{itemize}
\item[{\rm (\romannumeral 1)}] there is a constant $C_0<\infty$ independent of $k$
such that
\[
\sup_{(x,t)\in M_k^n\times(A,\Omega)}|Rm_k(x,t)|_{g_k(x,t)}\leq C_0,\quad\sup_{x\in
M_k^n}|H_k(x,0)|_{g_k(x,0)}\leq C_{0},
\]
\item[{\rm (\romannumeral 2)}] there is a constant $\iota _0>0$ satisfies
\[
{\rm inj}_{g_k(0)}(p_k)\ge\iota _0>0.
\]
\end{itemize}
Then there exists a subsequence $\{j_k\}$ such that $(M_{j_{k}}^n,g_{j_{k}}(t),
H_{j_{k}}(t),p_{j_{k}})$ converges to a complete pointed solution $(M_\infty^n,
g_\infty(t),H_\infty(t),p_\infty)$, $t\in(A,\Omega)$ to connection
Ricci flow as $k\to\infty$.
\end{theorem}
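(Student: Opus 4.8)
The plan is to run Hamilton's compactness argument for the Ricci flow, the only genuinely new ingredient being uniform control of the $3$-forms $H_k$ and of all their covariant derivatives. It is enough to fix a compact subinterval $[a,b]\subset(A,\Omega)$ with $a<0<b$, produce a subsequence converging in $C^\infty_{\rm loc}$ to a complete pointed solution over $[a,b]$, and then diagonalize over an exhaustion of $(A,\Omega)$.

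First I would establish that the sequence has uniformly bounded geometry on $M_k^n\times[a,b]$. By hypothesis $|Rm_k|_{g_k}\le C_0$ on all of $M_k^n\times(A,\Omega)$ and $|H_k(\cdot,0)|_{g_k(0)}\le C_0$. Theorem 2.2, applied forward from $t=0$, gives $|H_k(\cdot,t)|^2_{g_k(t)}\le C_0^2\,{\rm e}^{C_nC_0b}$ on $M_k^n\times[0,b]$; on the remaining piece $M_k^n\times[a,0]$ one argues by the same type of estimate after re-basing the flow at a negative time, combining the evolution equation (2.7) with $|Rm_k|\le C_0$ and $\Delta_{\rm LB}H=\Delta H+Rm\ast H$ to reach a parabolic inequality of the shape $\partial_t|H_k|^2\le\Delta|H_k|^2+C(n,C_0)|H_k|^2$ and invoking the maximum principle. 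Either way, $|Rm_k|$ and $|H_k|$ are bounded by a common constant $K=K(n,C_0,a,b)$ on $M_k^n\times[a,b]$, so Theorem 2.3 — applied on a short time interval about each time of $[a,b]$, or the interior version in [Zheng] — upgrades this to $|\nabla^{l-1}Rm_k|+|\nabla^lH_k|\le C_l(n,C_0,a',b)$ on $M_k^n\times[a',b]$ for every $l\ge1$ and every $a'>a$.

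Next I would pass to the limit. The bounds $|\nabla^lRm_k(\cdot,0)|\le C'_l$ together with the injectivity radius hypothesis place us in the hypotheses of the Cheeger--Gromov--Hamilton compactness theorem for pointed Riemannian manifolds, so after a subsequence there are an exhaustion $\{U_\ell\}$ of a complete pointed manifold $(M_\infty^n,p_\infty)$, indices $\ell(k)\to\infty$, and smooth embeddings $\Phi_k\colon U_{\ell(k)}\hookrightarrow M_k^n$ with $\Phi_k(p_\infty)=p_k$ and $\Phi_k^\ast g_k(0)\to g_\infty(0)$ in $C^\infty_{\rm loc}$. Pull the whole solutions back: $\bar g_k(t):=\Phi_k^\ast g_k(t)$, $\bar H_k(t):=\Phi_k^\ast H_k(t)$. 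Because $\partial_tg_k=-2{\rm Rc}_k+\tfrac12\mathcal H_k$ is uniformly bounded, the metrics $\bar g_k(t)$ stay uniformly equivalent to $\bar g_k(0)$, hence (for large $k$) to $g_\infty(0)$ on compact sets; and feeding the spatial derivative bounds of the previous step into the evolution equations of Proposition 2.1 — which express every time derivative of the data through spatial derivatives of $Rm_k$ and $H_k$ — produces uniform $C^\infty$ bounds, in space \emph{and} time, for $\bar g_k$ and $\bar H_k$ measured against the fixed background $g_\infty(0)$. Arzel\`a--Ascoli applied to this countable family of bounds, together with a diagonal argument over exhaustions of $M_\infty^n$ and over $[a,b]\nearrow(A,\Omega)$, yields a subsequence with $\bar g_k(t)\to g_\infty(t)$ and $\bar H_k(t)\to H_\infty(t)$ in $C^\infty_{\rm loc}(M_\infty^n\times(A,\Omega))$. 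Since every term of (1.1) is a universal polynomial in $g_k$, $g_k^{-1}$, $H_k$ and their covariant derivatives, the limit $(g_\infty(t),H_\infty(t))$ solves (1.1); likewise $dH_k(t)=0$ forces $dH_\infty(t)=0$, so $H_\infty$ is again a closed $3$-form, and $(M_\infty^n,g_\infty(0))$ is complete by construction.

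I expect the main obstacle to lie in two places. The first is the uniform control of $|H_k|$ on the backward portion $[a,0]$ of the time interval: the heat-type equation for $H$ is forward parabolic, so Theorem 2.2 does not apply verbatim, and one must either exploit a standing uniform bound on $H_k$ over $M_k^n\times(A,\Omega)$ or re-run the maximum-principle estimate from an earlier base time. The second, once bounded geometry is in hand, is the passage from uniform geometric bounds to genuine $C^\infty_{\rm loc}$ convergence of the \emph{time-dependent} families — equicontinuity in $t$ and the organization of the double space--time diagonal extraction — for which the evolution equations of Proposition 2.1 are the essential tool, as they convert time derivatives into spatial ones already controlled by Theorem 2.3 and [Zheng]. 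Everything else is the standard Cheeger--Gromov--Hamilton machinery.
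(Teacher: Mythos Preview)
The paper does not actually prove this statement: Theorem~2.4 is listed among the results that ``have been well known before'' and is attributed to [St] and [Li], so there is no in-paper proof to compare against directly. Your proposal follows the standard Hamilton compactness scheme, and it matches in structure the one proof the paper \emph{does} give in this section, namely that of the analogous Theorem~2.9 for Ricci harmonic flow via Lemma~2.10: bound the extra field and its derivatives uniformly (here $H_k$, there $\phi_k$), apply Cheeger--Gromov at $t=0$, pull back, and use the evolution equations to convert time derivatives into already-controlled spatial ones before invoking Arzel\`a--Ascoli.

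One point worth flagging. You correctly isolate the backward-in-time control of $|H_k|$ on $[a,0]$ as the delicate step, but your proposed fix---``re-run the maximum-principle estimate from an earlier base time''---does not work as stated: the heat-type inequality for $|H|^2$ is forward parabolic, so re-basing at some $t_0<0$ still requires an a priori bound on $|H_k(\cdot,t_0)|$, which the hypotheses do not provide. In the paper's actual applications (Section~3.3) this is harmless because the dilated solutions carry a uniform bound on the AC curvature $P=|Rm|+|\nabla H|+|H|^2$ over the full time interval, so $|H_k|$ is bounded everywhere to begin with; but as a proof of Theorem~2.4 \emph{as stated}, your argument has a genuine gap on $[a,0]$ unless you either appeal to the original sources [St], [Li] for the missing estimate or strengthen the hypothesis to a uniform bound on $|H_k|$ over $M_k^n\times(A,\Omega)$.
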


\subsection{Ricci harmonic flow}

Ricci harmonic flow is a combination of Ricci flow and harmonic map flow, where the latter
is an important tool to study harmonic map. [List] and [Mue] did detailed research about
this topic. Here we mainly introduce the evolution equations and some results about solitons
and maximum solutions. After this, we prove the convergence theorem.

\begin{proposition}
Under (1.2),
\begin{eqnarray}
\frac{\partial}{\partial t}R_{ijkl}\!\!\!&=&\!\!\!\Delta
R_{ijkl}+2(B_{ijkl}-B_{ijlk}-B_{iljk}+B_{ikjl})
\nonumber \\
\!\!\!& &\!\!\!
-(R_{pjkl}R_{pi}+R_{ipkl}R_{pj}+R_{ijpl}R_{pk}+R_{ijkp}R_{pl})
\nonumber \\
\!\!\!& &\!\!\!
+2\alpha(\nabla_i\nabla_k\phi\nabla_j\nabla_l\phi-\nabla_i\nabla_l\phi
\nabla_j\nabla_k\phi
\nonumber \\
\!\!\!& &\!\!\!
-\langle^NRm(\nabla_i\phi,\nabla_j\phi)\nabla_k\phi,
\nabla_l\phi\rangle), \\
\frac{\partial}{\partial t}R_{ij}\!\!\!&=&\!\!\!\Delta
R_{ij}+2R_{piqj}R_{pq}-2R_{pi}R_{pj}-2\alpha R_{piqj}\nabla_p\phi\nabla_q\phi
\nonumber \\
\!\!\!& &\!\!\!
-2\alpha\nabla_p\nabla_i\phi\nabla_p\nabla_j\phi
+2\alpha\tau_g\phi\nabla_i\phi\nabla_j\phi
\nonumber \\
\!\!\!& &\!\!\!
+2\alpha\langle^N
Rm(\nabla_i\phi,\nabla_p\phi)\nabla_p\phi,\nabla_j\phi\rangle, \\
\frac{\partial}{\partial
t}R\!\!\!&=&\!\!\!\Delta R+2|{\rm Rc}|^2-4\alpha\langle{\rm Rc},
\nabla\phi\otimes\nabla\phi\rangle-2\alpha|\nabla^2\phi|^2+2\alpha
|\tau_g\phi|^2
\nonumber \\
\!\!\!& &\!\!\!+2\alpha\langle^NRm(\nabla_i\phi,\nabla_j\phi)\nabla_j\phi,
\nabla_i\phi\rangle, \\
\frac{\partial}{\partial
t}(\nabla_i\phi\nabla_j\phi)\!\!\!&=&\!\!\!\Delta(\nabla_i\phi\nabla_j\phi)
-R_{pi}\nabla_p\phi\nabla_j\phi-R_{pj}\nabla_p\phi\nabla_i\phi
-2\nabla_p\nabla_i\phi\nabla_p\nabla_j\phi
\nonumber \\
\!\!\!& &\!\!\!+2\langle^NRm(\nabla_i\phi,\nabla_p\phi)\nabla_p\phi,
\nabla_j\phi\rangle, \\
\frac{\partial}{\partial
t}|\nabla\phi|^2\!\!\!&=&\!\!\!\Delta|\nabla\phi|^2-2|\nabla^2\phi|^2
-2\alpha|\nabla\phi\otimes\nabla\phi|^2
\nonumber \\
\!\!\!& &\!\!\!
+2\langle^NRm(\nabla_i\phi,\nabla_j\phi)
\nabla_j\phi,\nabla_i\phi\rangle,
\end{eqnarray}
where $B_{ijkl}=R_{piqj}R_{pkql}$, and $^NRm$ represents the curvature tensor of $N^n$.
\end{proposition}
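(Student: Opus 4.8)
The plan is to derive all five identities from the general first‑variation formulas for the curvature of a metric evolving by an arbitrary $\partial_t g_{ij}=h_{ij}$, specialised to $h_{ij}=-2R_{ij}+2\alpha\nabla_i\phi\nabla_j\phi$, combined with the second‑fundamental‑form identities for the map $\Phi:M^m\times[0,T)\to N^n$, $\Phi(x,t)=\phi(x,t)$, in the pullback bundle $\Phi^*TN$. First I would recall the standard formulas $\partial_t\Gamma^k_{ij}=\frac12 g^{kl}(\nabla_ih_{jl}+\nabla_jh_{il}-\nabla_lh_{ij})$, then $\partial_tR^l{}_{ijk}=\nabla_i(\partial_t\Gamma^l_{jk})-\nabla_j(\partial_t\Gamma^l_{ik})$, and, lowering an index with $\partial_t g_{lm}=h_{lm}$, obtain $\partial_tR_{ijkl}$ as an expression in $h$ and its covariant derivatives; contracting twice gives $\partial_tR_{ij}$ and $\partial_tR$, where one also uses $\partial_t g^{ij}=-g^{ik}g^{jl}h_{kl}=2R^{ij}-2\alpha\nabla^i\phi\nabla^j\phi$. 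The first step is thus bookkeeping: substitute this $h$ and split every resulting term into an ``$h=-2\mathrm{Rc}$'' part and an ``$h=2\alpha\nabla\phi\otimes\nabla\phi$'' part.

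For the $-2\mathrm{Rc}$ part there is nothing new: applying the twice‑contracted second Bianchi identity $\nabla^pR_{pijk}=\nabla_jR_{ik}-\nabla_kR_{ij}$, $\nabla^pR_{pi}=\frac12\nabla_iR$, together with the Ricci commutation rule, turns the third‑derivative terms into the Laplacian‑plus‑reaction form, reproducing verbatim Hamilton's Ricci‑flow evolution equations — this is exactly the origin of $\Delta R_{ijkl}+2(B_{ijkl}-B_{ijlk}-B_{iljk}+B_{ikjl})-(R_{pjkl}R_{pi}+\cdots)$ in (2.12), of $\Delta R_{ij}+2R_{piqj}R_{pq}-2R_{pi}R_{pj}$ in (2.13) and of $\Delta R+2|\mathrm{Rc}|^2$ in (2.14). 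The $2\alpha\nabla\phi\otimes\nabla\phi$ part is the substantive computation. Here $\partial_t\Gamma^k_{ij}$ simplifies, using the Hessian symmetry $\nabla_a\nabla_b\phi=\nabla_b\nabla_a\phi$ and the product rule for $\langle\cdot,\cdot\rangle_N$ along the pullback connection, to $2\alpha\, g^{kl}\langle\nabla_i\nabla_j\phi,\nabla_l\phi\rangle$; differentiating once more and commuting the two leading covariant derivatives by the identity $\nabla_i\nabla_p\nabla_q\phi-\nabla_p\nabla_i\nabla_q\phi=-R^M_{ipqr}\nabla_r\phi+{}^NRm(\nabla_i\phi,\nabla_p\phi)\nabla_q\phi$ (signs as dictated by the paper's curvature conventions) produces, on the one hand, the Hessian‑quadratic terms $2\alpha(\nabla_i\nabla_k\phi\nabla_j\nabla_l\phi-\nabla_i\nabla_l\phi\nabla_j\nabla_k\phi)$ and the target‑curvature term $-2\alpha\langle{}^NRm(\nabla_i\phi,\nabla_j\phi)\nabla_k\phi,\nabla_l\phi\rangle$ in (2.12), and on the other a term proportional to $R^M_{ijkr}\nabla_r\phi\nabla_l\phi$ that cancels against the $h^\phi_{lm}R^m{}_{ijk}$ coming from the index lowering; tracing then yields the $\alpha$‑terms of (2.13) and (2.14), the tension field $\tau_g\phi=g^{pq}\nabla_p\nabla_q\phi$ being recognised after contracting the Hessian.

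For (2.15) I would compute directly in $\Phi^*TN$: since $[\partial_t,\partial_{x^i}]=0$ on $M^m\times[0,T)$, the symmetry of $\nabla d\Phi$ gives $\nabla_{\partial_t}(\nabla_i\phi)=\nabla_i(\partial_t\phi)=\nabla_i(\tau_g\phi)$, so $\partial_t(\nabla_i\phi\nabla_j\phi)=\langle\nabla_i\tau_g\phi,\nabla_j\phi\rangle+\langle\nabla_i\phi,\nabla_j\tau_g\phi\rangle$; commuting $\nabla_i$ into the trace in $\tau_g\phi=g^{pq}\nabla_p\nabla_q\phi$ via the same commutation identity and the Hessian symmetry yields $\nabla_i\tau_g\phi=\Delta(\nabla_i\phi)-R_{ip}\nabla_p\phi+{}^NRm(\nabla_i\phi,\nabla_p\phi)\nabla_p\phi$, after which the product rule $\Delta(\nabla_i\phi\nabla_j\phi)=\langle\Delta(\nabla_i\phi),\nabla_j\phi\rangle+\langle\nabla_i\phi,\Delta(\nabla_j\phi)\rangle+2\nabla_p\nabla_i\phi\nabla_p\nabla_j\phi$ gives (2.15). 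Finally (2.16) follows by contracting (2.15) with $g^{ij}$ and adding the contribution $2\langle\mathrm{Rc},\nabla\phi\otimes\nabla\phi\rangle-2\alpha|\nabla\phi\otimes\nabla\phi|^2$ of $\partial_t g^{ij}$: the two copies of $\langle\mathrm{Rc},\nabla\phi\otimes\nabla\phi\rangle$ cancel, leaving $\Delta|\nabla\phi|^2-2|\nabla^2\phi|^2-2\alpha|\nabla\phi\otimes\nabla\phi|^2+2\langle{}^NRm(\nabla_i\phi,\nabla_j\phi)\nabla_j\phi,\nabla_i\phi\rangle$; alternatively (2.14) is just the $g^{ij}$‑trace of (2.13). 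The main obstacle is organisational rather than conceptual: one must carry the numerous $Rm\ast\nabla\phi$ and $\nabla^2\phi\ast\nabla^2\phi$ terms correctly through the domain Ricci identities and the pullback‑bundle commutation formula while keeping every sign consistent with the paper's curvature conventions, and in particular must recognise where the target curvature ${}^NRm$ and the tension field $\tau_g\phi$ re‑enter after these commutations.
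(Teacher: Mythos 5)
Your outline is correct and is essentially the standard derivation of these evolution equations — the paper itself states Proposition 2.5 without proof, citing [List] and [Mue], and the argument there is exactly what you describe: the first-variation formulas for $\Gamma$, $Rm$, $\mathrm{Rc}$, $R$ under $\partial_t g=h$ specialised to $h=-2\mathrm{Rc}+2\alpha\nabla\phi\otimes\nabla\phi$, combined with the commutation identity in the pullback bundle $\phi^*TN$ and the Bochner-type formula $\nabla_i\tau_g\phi=\Delta\nabla_i\phi-R_{ip}\nabla_p\phi+{}^NRm(\nabla_i\phi,\nabla_p\phi)\nabla_p\phi$. Carrying out your computation in fact shows that the term $2\alpha\,\tau_g\phi\,\nabla_i\phi\nabla_j\phi$ in (2.10) should read $2\alpha\,\tau_g\phi\,\nabla_i\nabla_j\phi$ (i.e.\ $2\alpha\langle\tau_g\phi,\nabla_i\nabla_j\phi\rangle$), which is also what is needed for its trace to produce the $2\alpha|\tau_g\phi|^2$ term in (2.11).
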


\emph{If we set $S_{ij}=R_{ij}-\alpha\nabla_i\phi\nabla_j\phi,\;S=R-\alpha|\nabla\phi|^2$, then
\begin{eqnarray}
\frac{\partial}{\partial t}S_{ij}\!\!\!&=&\!\!\!\Delta S_{ij}
+2R_{piqj}S_{pq}-R_{pi}S_{pj}-R_{pj}S_{pi}+2\alpha\tau_g\phi\nabla_i\nabla_j\phi, \\
\frac{\partial}{\partial t}S\!\!\!&=&\!\!\!\Delta S+2|S_{ij}|^2+2\alpha
|\tau_g\phi|^2.
\end{eqnarray}}

\begin{definition}
A solution $(g(t),\phi(t))_{t\in[0,T)}$ of (1.2) is called a soliton if there exists
a one-parameter family of diffeomorphisms $\psi_t:M^m\to M^m$ with $\psi_0={\rm
id}_{M^m}$ and a scaling function $c:[0,T)\to\mathbb{R}_+$ such that
\[
\left\{
\begin{array}{r l}
g(t) & \!\!\!\!=c(t)\psi^*_tg(0), \\
\phi(t) & \!\!\!\!=\psi^*_t\phi(0).
\end{array}
\right.
\]
The case $\frac{\partial}{\partial t}c=\dot{c}<0,\;\dot{c}=0$ and $\dot{c}>0$
correspond to shrinking, steady and expanding solitons, respectively. If the diffeomorphisms
$\psi_t$ are generated by a vector field $X(t)$ that is a gradient of some function $f(t)$
on $M^m$, then the soliton is called gradient soliton and $f$ is called the potential of the
soliton.
\end{definition}

Given a closed manifold $(M^m,g)$ and a map $\phi$ from $M^m$ to a Riemannian manifold $N^n$,
define
\[
\lambda_\alpha(g,\phi)=\inf\bigg\{\int_{M^m}(R+|\nabla
f|^2-\alpha|\nabla\phi|^2)e^{-f} dV\Big|\int_{M^m}e^{-f}dV=1\bigg\}.
\]
for $\tau>0$, again define
\[
\mu_\alpha(g,\phi,\tau)=\inf\bigg\{\int_{M^m}[\tau(R+|\nabla
f|^2-\alpha|\nabla\phi|^2)
+f-m](4\pi\tau)^{-m/2}e^{-f}dV\Big|\int_{M^m}(4\pi\tau)^{-m/2}e^{-f}dV=1\bigg\},
\]
where $R$ represents the scalar curvature with respect to $g$, the infimum is attained
throughout all functions $f\in C^\infty(M^m)$.

It is easy to know that $\lambda_\alpha$ is the first eigenvalue of the operator $-4\Delta+R-\alpha|\nabla\phi|^2$. Similar to Ricci flow, for each $\tau>0$, there exists
a smooth minimizer of $\mu_\alpha(g,\phi,\tau)$. The functional $\nu_\alpha$ is defined by
\[
\nu_\alpha(g,\phi)=\inf_{\tau>0}\mu_\alpha(g,\phi,\tau).
\]
By \S4.2, we know $\nu_\alpha(g,\phi)$ may be $-\infty$.

For the functional $\nu_\alpha$, we have the following significant monotonicity proposition
under Ricci harmonic flow.

\begin{proposition}
Let $(g(t),\phi(t))$ be a solution to (1.2) on a closed manifold $M^m$ with $\alpha$ a
constant. For a constant $T>0$, set $\tau(t)=T-t$, then $\mu_\alpha(g(t),\phi(t),\tau(t))$
is nondecreasing whenever it exists. Moreover, the monotonicity is strict unless
$(g(t),\phi(t))$ is a shrinking soliton.
\end{proposition}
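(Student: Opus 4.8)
The plan is to follow Perelman's original argument for the monotonicity of $\mu$ under Ricci flow, adapted to the Ricci harmonic flow by using the evolution equations (1.9)--(1.18) and, crucially, the $S$-tensor identities (1.17)--(1.18), which package the coupling between $R_{ij}$ and $\nabla\phi\otimes\nabla\phi$ into quantities that behave like Ricci curvature under the flow. First I would fix $T>0$, set $\tau(t)=T-t$, and let $f(t)$ be, for each $t$, the smooth minimizer realizing $\mu_\alpha(g(t),\phi(t),\tau(t))$ (whose existence is asserted in the text); write $u=(4\pi\tau)^{-m/2}e^{-f}$ so that $\int_M u\,dV=1$ is preserved. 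The key is to choose the evolution of $f$ (equivalently $u$) so that the constraint is maintained and the computation simplifies: take $\partial_t u = -\Delta u + (R-\alpha|\nabla\phi|^2)u - \frac{m}{2\tau}u$, i.e. $u$ evolves by the conjugate heat equation for the operator $-\partial_t - \Delta + S$ where $S=R-\alpha|\nabla\phi|^2$. Equivalently, $\partial_t f = -\Delta f + |\nabla f|^2 - S + \frac{m}{2\tau}$.

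Next I would differentiate the functional
\[
\mu_\alpha(g(t),\phi(t),\tau(t))=\int_M\big[\tau(R+|\nabla f|^2-\alpha|\nabla\phi|^2)+f-m\big]u\,dV
\]
along the flow. Using $\dot\tau=-1$, the evolution (1.11) for $R$, the evolution (1.15) for $|\nabla\phi|^2$, the evolution $\partial_t g_{ij}=-2S_{ij}$, the chosen equation for $f$, and integration by parts (all integrals over the closed manifold $M$, so no boundary terms), I expect the first-variation terms that vanish because $f$ is a minimizer to drop out, and the remaining terms to collect into a manifestly nonnegative integrand. The target identity, in analogy with Perelman and with the List/Müller computations, should be
\[
\frac{d}{dt}\mu_\alpha(g(t),\phi(t),\tau(t))
=\int_M 2\tau\Big|S_{ij}+\nabla_i\nabla_j f-\frac{1}{2\tau}g_{ij}\Big|^2 u\,dV
+\int_M 2\alpha\tau\,|\tau_g\phi|^2\,u\,dV \ge 0,
\]
where the first term comes from the Ricci-flow part (with $S_{ij}$ replacing $R_{ij}$, courtesy of (1.17)) and the second, extra nonnegative term comes precisely from the $2\alpha|\tau_g\phi|^2$ appearing in (1.18) and (1.15)'s harmonic-map contributions; the curvature terms of the target manifold $N$ cancel in the combination $S$, which is why $S$ rather than $R$ is the right object. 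Monotonicity whenever $\mu_\alpha$ exists then follows, since $\nu_\alpha$ (hence $\mu_\alpha$ along an optimally chosen $\tau$) is the stated object of interest.

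For the rigidity statement, equality in $\frac{d}{dt}\mu_\alpha=0$ forces both $\tau_g\phi\equiv 0$ and $S_{ij}+\nabla_i\nabla_j f=\frac{1}{2\tau}g_{ij}$ pointwise, i.e. $R_{ij}-\alpha\nabla_i\phi\nabla_j\phi+\nabla_i\nabla_j f=\frac{1}{2\tau}g_{ij}$; this is exactly the gradient shrinking Ricci harmonic soliton equation (with the harmonic-map equation $\tau_g\phi=0$ holding), matching Definition 2.7 with $\dot c<0$. I would then note, as in the Ricci-flow case, that if the monotonicity fails to be strict on any time interval it fails instantaneously, so the soliton structure propagates, and conclude that $(g(t),\phi(t))$ is a shrinking soliton. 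The main obstacle I anticipate is the bookkeeping in the differentiation step: correctly tracking the terms produced by the time-dependence of the volume form ($\partial_t dV=-S\,dV$ under (1.2) since $\mathrm{tr}_g(-2S_{ij})=-2S$), the terms from moving derivatives of $f$ past the evolving metric, and ensuring that the $^N Rm$ contributions in (1.11) and (1.15) genuinely cancel — this last cancellation is the crux and is what makes $S$, not $R$, the natural quantity; verifying it cleanly (rather than by brute force) is where I would spend the most care, likely by first recording the combined evolution of $S$ from (1.18) and only then assembling $\mu_\alpha$.
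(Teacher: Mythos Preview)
The paper does not actually supply a proof of this proposition: in Section~2 it is listed among the preliminary facts taken from [Mue] and [List] (``Except for Theorem 2.9, all the results have been well known before''), so there is no in-paper argument to compare against. Your overall strategy---Perelman's $\mathcal W$-monotonicity adapted via the $S$-tensor, with the minimizer $f$ evolved by the conjugate heat equation---is exactly the standard M\"uller/List argument the paper is invoking.

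There is, however, a concrete error in your target identity. The second nonnegative term in the derivative of $\mathcal W_\alpha$ is not $2\alpha\tau\,|\tau_g\phi|^2$ but
\[
2\alpha\tau\,\bigl|\tau_g\phi-\langle\nabla\phi,\nabla f\rangle\bigr|^2,
\]
i.e.\ the tension field must be twisted by the drift $\nabla f$ (this is what the pullback by the diffeomorphisms generated by $\nabla f$ produces). Your version would not close: the cross terms coming from $\partial_t(|\nabla f|^2)$, the variation of $|\nabla\phi|^2$ through the metric, and the $f$-equation generate a $-2\alpha\,\tau_g\phi\,\langle\nabla\phi,\nabla f\rangle$ term that only completes a square with the correct twisted expression. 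Consequently your rigidity conclusion is also off: equality forces
\[
S_{ij}+\nabla_i\nabla_j f=\tfrac{1}{2\tau}g_{ij}\quad\text{and}\quad \tau_g\phi=\langle\nabla\phi,\nabla f\rangle,
\]
not $\tau_g\phi=0$. The second equation is precisely the $\phi$-component of the gradient shrinking soliton system (compare Definition~2.6: $\partial_t\phi=\psi_t^\ast\mathcal L_{\nabla f}\phi$ yields $\tau_g\phi=\nabla_{\nabla f}\phi$), so the soliton conclusion still follows, but from the corrected pair of equations. With this fix your sketch matches the cited proof.
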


As for the long-time existence of Ricci harmonic flow, there is the following theorem.

\begin{theorem}
Let $(g(t),\phi(t))_{t\in[0,T)}$ be a solution to (1.2). Suppose $T<\infty$ is chosen such
that the solution is maximal, i.e. the solution cannot be extended beyond $T$ in a smooth
way. Then the curvature of $(M^m,g(t))$ has to become unbounded for $t\nearrow T$ in the
sense that
\[
\limsup_{t\nearrow T}(\max_{x\in M^m}|Rm(x,t)|^2)=\infty.
\]
\end{theorem}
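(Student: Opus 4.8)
The plan is to argue by contradiction. Suppose the conclusion fails, so that there is a finite $K$ with $|Rm(x,t)|^2\le K^2$ for all $x\in M^m$ and all $t\in[0,T)$; I will show that the solution then extends smoothly to an interval $[0,T+\varepsilon)$, contradicting the maximality of $T$. The argument splits into three stages: (i) upgrade the curvature bound to a uniform bound on the energy density $|\nabla\phi|^2$; (ii) use Bernstein--Bando--Shi-type interior estimates to bound all covariant derivatives $\nabla^kRm$ and $\nabla^{k+1}\phi$ near $t=T$; (iii) pass to the limit $t\nearrow T$ to obtain smooth Cauchy data $(g(T),\phi(T))$ and restart the flow.

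Stage (i) is where the coupling of the two equations is essential — the curvature bound alone does not control $|\nabla\phi|$, since the reaction term in the evolution equation (2.12) for $|\nabla\phi|^2$ contains the quartic, sign-indefinite target-curvature term $2\langle{}^NRm(\nabla_i\phi,\nabla_j\phi)\nabla_j\phi,\nabla_i\phi\rangle$; indeed, harmonic map heat flow from a flat domain can develop singularities with $|\nabla\phi|$ unbounded while $|Rm|\equiv0$, so for $\alpha=0$ the statement genuinely fails. Instead I would use the auxiliary scalar $S=R-\alpha|\nabla\phi|^2$. By (2.14), $\frac{\partial}{\partial t}S=\Delta S+2|S_{ij}|^2+2\alpha|\tau_g\phi|^2\ge\Delta S$, so on the closed manifold $M^m$ the scalar maximum principle shows $\min_{M^m}S(\cdot,t)$ is nondecreasing; hence $S\ge-C_0$ on $[0,T)$ with $C_0=-\min_{M^m}S(\cdot,0)$. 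Since $|R|\le c(m)|Rm|\le c(m)K$, this gives $\alpha|\nabla\phi|^2=R-S\le c(m)K+C_0$, i.e. (using $\alpha>0$) a uniform bound $|\nabla\phi|^2\le\Lambda:=\alpha^{-1}(c(m)K+C_0)$ on $M^m\times[0,T)$.

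For stage (ii), with $|Rm|\le K$ and $|\nabla\phi|^2\le\Lambda$ in hand the metric evolution $\frac{\partial}{\partial t}g=-2\mathrm{Rc}+2\alpha\nabla\phi\otimes\nabla\phi$ satisfies $|\frac{\partial}{\partial t}g|_g\le 2c(m)K+2\Lambda$, so integrating in $t$ shows all the metrics $g(t)$, $t\in[0,T)$, are uniformly equivalent. I would then invoke the Bernstein--Bando--Shi-type interior derivative estimates for the Ricci harmonic flow (the analogue for (1.2) of Theorem 2.3, proved in [Mue] and [List]): these upgrade the zeroth-order bounds on $|Rm|$ and $|\nabla\phi|$ to uniform bounds $|\nabla^kRm|+|\nabla^{k+1}\phi|\le C_k$ on $M^m\times[T/2,T)$ for every $k\ge0$; in particular, taking $k=1$, $|\partial_t\phi|=|\tau_g\phi|\le\sqrt{m}\,|\nabla^2\phi|$ is bounded as well.

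In stage (iii), the uniform equivalence of the $g(t)$ together with the uniform bounds on $\nabla^kRm$, $\nabla^{k+1}\phi$ (and hence, via the evolution equations, on $\partial_t\nabla^kg$ and $\partial_t\nabla^k\phi$) and the finiteness of $T$ force $g(t)$ and $\phi(t)$ to be Cauchy in $C^k(M^m)$ for every $k$ as $t\nearrow T$; they therefore converge to a smooth metric $g(T)$ and a smooth map $\phi(T):M^m\to N^n$. Feeding $(g(T),\phi(T))$ into the short-time existence theorem for Ricci harmonic flow (cf. [Mue], [List]) produces $\varepsilon>0$ and a smooth solution on $[T,T+\varepsilon)$; concatenating with the original solution yields a smooth solution of (1.2) on $[0,T+\varepsilon)$, contradicting maximality. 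Hence no such $K$ exists and $\limsup_{t\nearrow T}(\max_{x\in M^m}|Rm(x,t)|^2)=\infty$. I expect stage (i) to be the main obstacle, or at least the point requiring genuine insight: the statement is false without the coupling, so one must identify the right quantity, namely $S=R-\alpha|\nabla\phi|^2$, whose evolution absorbs the troublesome quartic target-curvature term; stages (ii) and (iii) are then the standard continuation and compactness machinery familiar from Ricci flow.
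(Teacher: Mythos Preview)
The paper does not give its own proof of this statement: Theorem~2.8 is listed among the preliminaries and attributed to [Mue] and [List] (the paper says explicitly ``Except for Theorem~2.9, all the results have been well known before''). Your argument is essentially the one in those references---bound $|\nabla\phi|^2$ via the maximum principle applied to $S=R-\alpha|\nabla\phi|^2$ using (2.14), then invoke the coupled Shi-type derivative estimates and the standard continuation machinery---so there is nothing to compare against here beyond noting that you have correctly reconstructed the known proof.

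Your observation that stage~(i) genuinely requires $\alpha>0$ is correct and worth emphasising: for $\alpha=0$ the system decouples into pure Ricci flow for $g$ and harmonic map heat flow for $\phi$, and one can take $M^m$ flat so that $|Rm|\equiv0$ while $\phi$ develops a finite-time singularity (e.g.\ Chang--Ding--Ye type bubbling into $S^2$). M\"uller's original theorem assumes $\alpha(t)\ge\underline{\alpha}>0$; the present paper's standing hypothesis $\alpha\ge0$ is thus slightly too permissive for the theorem as stated, and your proof is right to silently assume $\alpha>0$.
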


In order to deal with the singularity model, we also need the compactness theorem
for Ricci harmonic flow. The following theorem is what we will prove in this section.

\begin{theorem}
Let $(M_k^m,g_k(t),\phi_k(t),p_k)$, $t\in(A,\Omega)
\ni0$, be a sequence of complete pointed solutions to Ricci harmonic flow such that
\begin{itemize}
\item[{\rm (\romannumeral 1)}] there is a constant $C_0<\infty$ independent of $k$
such that
\[
\sup_{(x,t)\in M_k^m\times(A,\Omega)}|Rm_k(x,t)|_{g_k(x,t)}\leq C_0,
\]
\item[{\rm (\romannumeral 2)}] there is a constant $\iota _0>0$ satisfies
\[
{\rm inj}_{g_k(0)}(p_k)\ge\iota _0>0.
\]
\end{itemize}
Then there exists a subsequence $\{j_k\}$ such that $(M_{j_k}^m,g_{j_k}(t),\phi_{j_k}(t),
p_{j_k})$ converges to a complete pointed solution to Ricci harmonic flow $(M_\infty^m,
g_\infty(t),\phi_\infty(t),p_\infty)$, $t\in(\alpha,\omega)$, as $k\to\infty$.
\end{theorem}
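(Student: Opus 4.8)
The plan is to deduce the statement from Hamilton's Cheeger--Gromov compactness theorem for the Ricci flow, treating the maps $\phi_k$ as an auxiliary family that one controls by uniform derivative bounds and then extracts by Arzel\`a--Ascoli. It is enough to produce the limit over an arbitrary compact subinterval $[a,b]\subset(A,\Omega)$ with $0\in[a,b]$; convergence on all of $(A,\Omega)$ then follows by a diagonal argument over an exhaustion by such subintervals.

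\smallskip
First I would upgrade the curvature hypothesis (i) to uniform bounds on all derivatives of both the curvature and the map. The essential preliminary is a bound on the energy density $|\nabla\phi_k|$: by (2.14) and the maximum principle the spatial minimum of $S_k=R_k-\alpha|\nabla\phi_k|^2$ is nondecreasing in $t$, so a uniform lower bound for $S_k$ at one time of $(A,\Omega)$ propagates and, combined with (i), gives $\alpha|\nabla\phi_k|^2\le R_k-\inf S_k\le C_0'$ on $M_k\times[a,b]$ (when the target $N$ has nonpositive curvature this bound follows directly from (2.12) and the maximum principle). Granting $|\nabla\phi_k|\le C_0'$, the Bernstein--Bando--Shi-type interior estimates for the Ricci harmonic flow --- established in [List] and [Mue], in the spirit of Theorem 2.3, and available up to the boundary times $a,b$ because the solutions live on the strictly larger interval $(A,\Omega)$ --- yield, for every $j\ge 0$, a constant $C_j=C_j(j,m,n,C_0,C_0',a,b,N)$ with $|\nabla^j Rm_k|\le C_j$ and $|\nabla^{j+1}\phi_k|\le C_j$ on $M_k\times[a,b]$. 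Differentiating the flow equation $\partial_t g_k=-2\,{\rm Rc}_k+2\alpha\,\nabla\phi_k\otimes\nabla\phi_k=-2S_k$ and substituting these bounds converts them into bounds on all mixed derivatives $|\partial_t^p\nabla^q g_k|$, which is exactly the input Hamilton's theorem requires.

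\smallskip
With these bounds and the injectivity radius bound (ii), Hamilton's compactness theorem for metrics applies: after passing to a subsequence there are a complete pointed manifold $(M_\infty^m,g_\infty(0),p_\infty)$, an exhaustion $\{U_k\}$ of $M_\infty$ by precompact open sets containing $p_\infty$, and smooth embeddings $\Phi_k\colon U_k\to M_k$ with $\Phi_k(p_\infty)=p_k$ such that $\Phi_k^\ast g_k(t)\to g_\infty(t)$ in $C^\infty_{\rm loc}(M_\infty\times[a,b])$. Next I would pull back the maps, $\psi_k:=\phi_k\circ\Phi_k\colon U_k\to N$; after a further subsequence one may assume $\psi_k(p_\infty)=\phi_k(p_k)\to q_\infty\in N$ (automatic when $N$ is compact, and otherwise controlled on the precompact sets $U_k$ by the gradient bound). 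The bound $|\nabla\phi_k|\le C_0'$ gives $C^0_{\rm loc}$ control of $\psi_k$, and the higher estimates of the previous step, transported through the $C^\infty_{\rm loc}$-convergent $\Phi_k$, bound $\psi_k$ in $C^\infty_{\rm loc}(M_\infty\times[a,b])$ relative to $g_\infty$; by Arzel\`a--Ascoli and a diagonal argument a subsequence satisfies $\psi_k\to\phi_\infty$ in $C^\infty_{\rm loc}$. Since $C^\infty_{\rm loc}$-convergence passes through the nonlinear terms of (1.2) --- the tension field of $\psi_k$ and the curvature of $N$ --- taking the limit in the equations for the pairs $(\Phi_k^\ast g_k,\psi_k)$ shows that $(g_\infty(t),\phi_\infty(t))$ solves the Ricci harmonic flow, and $g_\infty(t)$ is complete by Hamilton's theorem.

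\smallskip
The hard part is the first step. Even the zeroth-order input to Hamilton's theorem --- boundedness of the metric velocity $\partial_t g_k=-2S_k$ --- already requires $|\nabla\phi_k|$ to be under control, yet the curvature of the fixed target $N$ enters the evolution (2.12) of $|\nabla\phi_k|$ with a potentially unfavorable sign, so that, absent a sign condition on $N$ or an a priori bound on the initial energy densities, securing this control is exactly where care (or an additional standing hypothesis) is needed; it is also the place in the Shi-type estimates where the curvature hierarchy and the derivatives of $\phi_k$ couple and must be estimated together. A smaller but genuine issue is the behavior of the base points $\phi_k(p_k)$ in $N$ when $N$ is noncompact.
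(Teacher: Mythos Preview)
Your overall strategy coincides with the paper's: reduce to Hamilton's Cheeger--Gromov compactness machinery, feed in Shi-type derivative estimates for the coupled system, and then extract the maps by Arzel\`a--Ascoli. The paper makes this explicit by saying the proof is Hamilton's ``standard procedure'' and that the only new ingredient needed is a local equicontinuity lemma (Lemma 2.10), whose hypotheses (a)--(c) are precisely the uniform bounds you list; the body of that lemma is the step you describe as ``differentiating the flow equation \ldots\ converts them into bounds on all mixed derivatives $|\partial_t^p\nabla^q g_k|$.'' So at the structural level your proposal and the paper's proof are the same.

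Where you differ is in how the zeroth-order input $|\nabla\phi_k|\le C'$ is obtained. The paper does \emph{not} derive this from hypotheses (i)--(ii) of the theorem; it simply writes it into assumption (c) of Lemma 2.10 and proceeds. Your attempt to squeeze it out of (2.14) does not work as stated: the maximum principle gives monotonicity of $\min S_k$ along each flow, but nothing in the hypotheses supplies a lower bound for $S_k$ at some fixed time that is \emph{uniform in $k$}, so no uniform control of $\alpha|\nabla\phi_k|^2$ follows. Your parenthetical observation for nonpositively curved $N$ (via (2.12) and the good $-2\alpha|\nabla\phi\otimes\nabla\phi|^2$ term) does give a $k$-independent interior bound when $\alpha>0$, but that is a special case. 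In short, you have correctly located the genuine soft spot of the statement: as written, Theorem 2.9 lacks the analogue of the $|H_k(\cdot,0)|$ bound in Theorem 2.4, and the paper quietly absorbs this into the lemma's hypotheses rather than deriving it. In the paper's applications this is harmless, since the dilations are normalized so that the AC curvature $Q=|Rm|+|\nabla^2\phi|+|\nabla\phi|^2$ is uniformly bounded, which supplies $|\nabla\phi_k|\le C'$ directly; but as a stand-alone theorem the bound must be added as a hypothesis, exactly as you suspected. Your remark about $\phi_k(p_k)$ when $N$ is noncompact is likewise a real point the paper does not address.
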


The proof is a standard procedure given by Richard Hamilton which has already solves
the compactness theorem in Ricci flow and connection Ricci flow. Here we only need to
prove the following lemma which plays a vital role in the proof of the theorem.

\begin{lemma}
Let $(M^m,g)$ be a Riemannian manifold, $K$ a compact
subset of $M^m$, and $(g_k(t),\phi_k(t))$ a collection of solutions to Ricci harmonic
flow defined on neighborhoods of $K\times[\gamma,\delta]$ with $[\gamma,\delta]$ containing
0. Suppose that for each $r$,
\begin{itemize}
\item[{\rm (a)}] $C_0^{-1}g\le g_k(0)\le C_0g$, \;on $K$, \;for all $k$,
\item[{\rm (b)}] $|\nabla^rg_k(0)|+|\nabla^r\phi_k(0)|\le C_r$, \;on $K$, \;for all $k$,
$r=1,2,\cdots$,
\item[{\rm (c)}] $|\nabla_k\phi_k|_k\le C'$, $|\nabla_k^rRm_k|_k+|\nabla_k^{r+2}\phi_k|_k\le
C_r'$, \;on $K\times[\gamma,\delta]$, \;for all $k$, $r=0,1,2,\cdots$,
\end{itemize}
for some positive constants $C'$, $C_r$, $C_r'$ independent of $k$, where
$Rm_k$ are the curvature tensor of the metrics $g_k(t)$, $\nabla_k$ denote covariant
derivative with respect to $g_k(t)$, $|\cdot|_k$ are the length of a tensor with respect
to $g_k(t)$, and $|\cdot|$ is the length with respect to $g$. Then the metrics $g_k(t)$
satisfy
\[
\tilde{C}_0^{-1}g\le g_k(t)\le\tilde{C}_0g,\;\mbox{on }K\times[\gamma,\delta]
\]
and
\[
\bigg|\frac{\partial^s}{\partial t^s}\nabla^rg_k\bigg|+\bigg|\frac{\partial^s}{\partial t^s}\nabla^r\phi_k\bigg|\le\tilde{C}_{r,s},\;\mbox{on }K\times[\gamma,\delta],\;r,s=1,2,\cdots,
\]
for all $k$, where $\tilde{C}_{r,s}$ are positive constants independent of $k$.
\end{lemma}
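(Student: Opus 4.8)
The plan is to follow the classical Hamilton argument (as used in the Ricci flow and connection Ricci flow cases) that turns spatial bounds at time $0$ together with uniform bounds on curvature and on derivatives of curvature and $\phi$ into uniform bounds on all space-time derivatives of the metric and of $\phi$ over the whole cylinder $K\times[\gamma,\delta]$. First I would establish the equivalence of the metrics $g_k(t)$ with the fixed background metric $g$ on $K\times[\gamma,\delta]$. Since $\frac{\partial}{\partial t}g_k=-2\,\mathrm{Rc}_k+2\alpha\,\nabla\phi_k\otimes\nabla\phi_k$, hypotheses (c) give a pointwise bound $\big|\frac{\partial}{\partial t}g_k\big|_{g_k}\le C$ (the Ricci curvature is controlled by $|Rm_k|_k\le C_0'$ and the term $2\alpha\nabla\phi_k\otimes\nabla\phi_k$ by $(C')^2$). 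Integrating this ODE bound in $t$, exactly as in the Ricci flow proof, shows $e^{-C|\delta-\gamma|}g_k(0)\le g_k(t)\le e^{C|\delta-\gamma|}g_k(0)$, and then (a) upgrades this to the desired two-sided bound $\tilde C_0^{-1}g\le g_k(t)\le\tilde C_0 g$.

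Next I would bound the time derivatives of $g_k$ and $\phi_k$ at order zero in space. For $\frac{\partial}{\partial t}g_k$ this is the estimate already used above. For $\frac{\partial}{\partial t}\phi_k=\tau_{g_k}\phi_k$, the tension field is a contraction of $\nabla^2\phi_k$ with $g_k^{-1}$, so it is bounded by $\tilde C_0\,|\nabla^2\phi_k|$, and $(c)$ with $r=0$ controls $|\nabla_k^2\phi_k|_k$; combined with the metric equivalence this gives a uniform bound on $\big|\frac{\partial}{\partial t}\phi_k\big|$. Higher time derivatives are obtained by differentiating the evolution equations in $t$: each $t$-derivative of $\frac{\partial}{\partial t}g_k$ or of $\tau_{g_k}\phi_k$ produces a universal polynomial expression in $g_k$, $g_k^{-1}$, the Christoffel symbols, curvature and its covariant derivatives, and covariant derivatives of $\phi_k$, all of which are controlled by hypotheses (c) once the metric equivalence is known (note $\Gamma_k-\widetilde\Gamma$ is a contraction of $\nabla g_k$, bounded by (b) at $t=0$ and then propagated).

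The remaining — and genuinely more technical — step is to control the \emph{spatial} covariant derivatives $\nabla^r g_k$ and $\nabla^r\phi_k$ (with respect to the \emph{fixed} connection of $g$) on the whole cylinder, not just at $t=0$. The standard device is to estimate, inductively in $r$, the quantity $A_r := \nabla^r g_k$ regarded as a tensor, using that $\frac{\partial}{\partial t}\Gamma_k$ is itself a first covariant derivative of $\frac{\partial}{\partial t}g_k$, hence a polynomial in $g_k^{-1}$, $\nabla Rm_k$ and $\nabla^3\phi_k$ (contracted with $\nabla\phi_k$); one writes an evolution inequality $\big|\frac{\partial}{\partial t}\nabla^r g_k\big|\le \text{(polynomial in lower-order quantities already bounded)} + C|\nabla^{r}Rm_k| + C|\nabla^{r+2}\phi_k| + \cdots$, and integrates in $t$ starting from the bound (b) at $t=0$. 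The same scheme applied to $\phi_k$ uses $\frac{\partial}{\partial t}\nabla^r\phi_k$, whose leading term is $\nabla^r\tau_{g_k}\phi_k$, controlled by $|\nabla^{r+2}\phi_k|_k$ from (c). Iterating $r=1,2,\dots$ gives uniform bounds on all $|\nabla^r g_k|$ and $|\nabla^r\phi_k|$ on $K\times[\gamma,\delta]$; feeding these, together with the already-established bounds on $g_k^{-1}$ and on all $t$-derivatives, into the expressions for the mixed derivatives $\frac{\partial^s}{\partial t^s}\nabla^r g_k$ and $\frac{\partial^s}{\partial t^s}\nabla^r\phi_k$ — each being a universal polynomial in these controlled quantities — yields the asserted bounds $\tilde C_{r,s}$.

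The main obstacle is the bookkeeping in the mixed-derivative step: one must verify that every term produced when commuting $\partial_t$ past $\nabla^r$ and expanding $\frac{\partial}{\partial t}\Gamma_k$ and $\frac{\partial}{\partial t}\tau_{g_k}\phi_k$ only involves quantities already bounded at a \emph{strictly lower} level of the induction, so that the induction closes. This is exactly parallel to the Ricci flow case; the only genuinely new feature is the presence of the map $\phi_k$, which is handled uniformly because the hypotheses (c) are stated with the "shifted" indices $|\nabla_k^r Rm_k|_k+|\nabla_k^{r+2}\phi_k|_k\le C_r'$ — i.e. $\phi_k$ is always allowed two extra derivatives relative to $Rm_k$, which is precisely what the tension-field term $\tau_{g_k}\phi_k$ costs. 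Once that index accounting is set up, no new analytic difficulty arises beyond what is already present in Hamilton's original proof.
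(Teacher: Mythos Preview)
Your proposal is correct and follows essentially the same route as the paper: first integrate the evolution equation for $g_k$ to get metric equivalence, then control $\Gamma_k-\Gamma$ and the spatial derivatives $\nabla^r g_k$, $\nabla^r\phi_k$ by writing $\partial_t$ of each as a polynomial in already-bounded quantities (using (c) and the formula $\Gamma_k-\Gamma=\tfrac12 g_k^{-1}\nabla g_k$) and integrating from the $t=0$ bounds (b), and finally handle the mixed derivatives $\partial_t^s\nabla^r$ by induction on $s$ via the evolution equations. The paper carries out the cases $r=1,2$ explicitly and then appeals to the same induction you describe; your remark about the shifted index in (c) matching the two extra derivatives cost by $\tau_{g_k}\phi_k$ is exactly the point that makes the $\phi$-bookkeeping close.
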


\begin{proof}
First, the equation
\[
\frac{\partial}{\partial t}g_k=-2{\rm Rc}_k+2\alpha\nabla\phi_k\otimes\nabla\phi_k
\]
and the assumption (c) give that
\begin{equation}
\tilde{C}_0^{-1}g\le g_k(t)\le\tilde{C}_0g,
\end{equation}
on $K\times[\alpha,\beta]$ for some positive constant $\tilde{C}_0$ independent of $k$.

For the second part of the conclusion. When $s=0$, we divide the proof into two parts.
That is, we will prove the following
\[
|\nabla^rg_k|\le\tilde{C}_{r,0}',\quad|\nabla^r\phi_k|\le\tilde{C}_{r,0}''.
\]
Here we show the case of $r=1,2$, and the higher covariant derivative cases can be
derived by the same argument.

Taking the difference of the connection $\Gamma_k$ of $g_k$ and the connection $\Gamma$
of $g$ with $\Gamma$ being fixed in time, we get
\begin{eqnarray*}
&&\!\!\!\frac{\partial}{\partial t}((\Gamma_k)_{ab}^c-\Gamma_{ab}^c) \\
&=&\!\!\!\frac{\partial}{\partial t}\bigg(\frac{1}{2}(g_k)^{cd}\bigg[(\nabla_k)_a(g_k)_{bd}
+(\nabla_k)_b(g_k)_{ad}-(\nabla_k)_d(g_k)_{ab}\bigg]\bigg) \\
&=&\!\!\!\frac{1}{2}(g_k)^{cd}[(\nabla_k)_a(-2({\rm Rc}_k)_{bd}+2\alpha(\nabla_k)_b
\phi_k\cdot(\nabla_k)_d\phi_k) \\
&&\qquad\;\;+(\nabla_k)_b(-2({\rm Rc}_k)_{ad}+2\alpha(\nabla_k)_a\phi_k\cdot(\nabla_k)_d
\phi_k) \\
&&\qquad\;\;-(\nabla_k)_d(-2({\rm Rc}_k)_{ab}+2\alpha(\nabla_k)_a\phi_k\cdot(\nabla_k)_b
\phi_k)],
\end{eqnarray*}
and by assumption (c) and (2.15),
\[
\bigg|\frac{\partial}{\partial t}(\Gamma_k-\Gamma)\bigg|\le C,\quad\mbox{for all }k.
\]
For time $t=0$, at a normal coordinate of the metric $g$ at a fixed point, note that
\begin{equation}
(\Gamma_k)_{ab}^c-\Gamma_{ab}^c=\frac{1}{2}(g_k)^{cd}(\nabla_a(g_k)_{bd}+\nabla_b
(g_k)_{ad}-\nabla_d(g_k)_{ab}),
\end{equation}
then by assumption (b) and (2.15)
\[
|\Gamma_k(0)-\Gamma|\le C,\quad\mbox{for all }k.
\]
Integrating over time we deduce that
\begin{equation}
|\Gamma_k-\Gamma|\le C,\;\mbox{on }K\times[\gamma,\delta],\quad\mbox{for all }k.
\end{equation}
Again using assumption (c), (2.15) and (2.17), we have
\begin{eqnarray*}
\bigg|\frac{\partial}{\partial t}(\nabla g_k)\bigg|\!\!\!&=&\!\!\!|-2\nabla
({\rm Rc}_k-\alpha\nabla_k\phi_k\otimes\nabla_k\phi_k)| \\
&=&\!\!\!|-2\nabla_k({\rm Rc}_k-\alpha\nabla_k\phi_k\otimes\nabla_k\phi_k) \\
&&+(\Gamma_k-\Gamma)\ast({\rm Rc}_k-\alpha\nabla_k\phi_k\otimes\nabla_k\phi_k)| \\
&\le&\!\!\!C,\quad\mbox{for all }k.
\end{eqnarray*}
Thus by combining with assumption (b) we get bounds
\begin{equation}
|\nabla g_k|\le\tilde{C}_{1,0}',\quad\mbox{on }K\times[\gamma,\delta],
\end{equation}
where $\tilde{C}_{1,0}'$ is a positive constant independent of $k$.

Similarly, assumption (c), (2.15) and (2.17) also indicate
\begin{eqnarray*}
\bigg|\frac{\partial}{\partial t}(\nabla\phi_k)\bigg|\!\!\!&=&\!\!\!|\nabla(\tau
_{g_{k}}\phi_k)| \\
&=&\!\!\!|\nabla_k(\tau_{g_{k}}\phi_k)+(\Gamma_k-\Gamma)\ast(\tau_{g_{k}}\phi_k)| \\
&\le&\!\!\!C,\quad\mbox{for all }k.
\end{eqnarray*}
This, combined with assumption (b), gives the bounds
\[
|\nabla\phi_k|\le\tilde{C}_{1,0}'',\quad\mbox{on }K\times[\gamma,\delta],
\]
where $\tilde{C}_{1,0}''$ is a positive constant independent of $k$. The case of
$r=1$ is finished.

Next we begin to deal with the case of $r=2$. In order to bound $\nabla^2g_k$, again
regarding $\nabla$ as fixed in time, we know
\[
\frac{\partial}{\partial t}(\nabla^2g_k)=-2\nabla^2({\rm Rc}_k-\alpha\nabla_k\phi_k
\otimes\nabla_k\phi_k).
\]
Write
\begin{eqnarray*}
&&\!\!\!\nabla^2({\rm Rc}_k-\alpha\nabla_k\phi_k
\otimes\nabla_k\phi_k) \\
&=&\!\!\!\big[(\nabla-\nabla_k)\nabla+\nabla_k(\nabla-\nabla_k)+\nabla_k^2\big]({\rm Rc}_k-
\alpha\nabla_k\phi_k\otimes\nabla_k\phi_k) \\
&=&\!\!\!(\Gamma-\Gamma_k)\ast(\nabla({\rm Rc}_k-\alpha\nabla_k\phi_k\otimes\nabla_k\phi_k)) \\
&&\!\!\!+\nabla_k((\Gamma-\Gamma_k)\ast({\rm Rc}_k-\alpha\nabla_k\phi_k\otimes\nabla_k\phi_k)) \\
&&\!\!\!+\nabla_k^2({\rm Rc}_k-\alpha\nabla_k\phi_k\otimes\nabla_k\phi_k) \\
&=&\!\!\!(\Gamma-\Gamma_k)\ast((\Gamma-\Gamma_k)\ast({\rm Rc}_k-\alpha\nabla_k\phi_k
\otimes\nabla_k\phi_k)+\nabla_k({\rm Rc}_k-\alpha\nabla_k\phi_k\otimes\nabla_k\phi_k)) \\
&&\!\!\!+\nabla_k(g_k^{-1}\ast\nabla g_k\ast({\rm Rc}_k-\alpha\nabla_k\phi_k\otimes\nabla_k
\phi_k)) \\
&&\!\!\!+\nabla_k^2({\rm Rc}_k-\alpha\nabla_k\phi_k\otimes\nabla_k\phi_k),
\end{eqnarray*}
where we have used (2.16). Then using assumption (c), (2.15), (2.17) and (2.18), we have
\begin{eqnarray*}
\bigg|\frac{\partial}{\partial t}\nabla^2g_k\bigg|\!\!\!&\le&\!\!\!C+C\cdot|\nabla_k\nabla
g_k| \\
&=&\!\!\!C+C\cdot|\nabla^2g_k+(\Gamma_k-\Gamma)\ast\nabla g_k| \\
&\le&\!\!\!C+C|\nabla^2g_k|.
\end{eqnarray*}
Thus by combining with assumption (b) we get bounds
\[
|\nabla^2g_k|\le\tilde{C}_{2,0}',\quad\mbox{on }K\times[\gamma,\delta],
\]
where $\tilde{C}_{2,0}'$ is a positive constant independent of $k$.

Similarly, assumption (c), (2.15) and (2.17) also indicate
\begin{eqnarray*}
\bigg|\frac{\partial}{\partial t}(\nabla^2\phi_k)\bigg|\!\!\!&=&\!\!\!|\nabla^2(\tau
_{g_{k}}\phi_k)| \\
&\le&\!\!\!C+C|\nabla^2g_k| \\
&\le&\!\!\!C,\quad\mbox{for all }k.
\end{eqnarray*}
This, combined with assumption (b), gives the bounds
\[
|\nabla^2\phi_k|\le\tilde{C}_{2,0}'',\quad\mbox{on }K\times[\gamma,\delta],
\]
where $\tilde{C}_{2,0}''$ is a positive constant independent of $k$. The case of
$r=2$ is also proved.

In the end, we discuss the case $s\ge1$ in the second part of the conclusion. Since
\begin{eqnarray*}
\frac{\partial^s}{\partial t^s}\nabla^rg_k\!\!\!&=&\!\!\!\nabla^r\frac{\partial^{s-1}}
{\partial t^{s-1}}(-2{\rm Rc}_k+2\alpha\nabla_k\phi_k\otimes\nabla_k\phi_k), \\
\frac{\partial^s}{\partial t^s}\nabla^r\phi_k\!\!\!&=&\!\!\!\nabla^r\frac{\partial^{s-1}}
{\partial t^{s-1}}(\tau_{g_k}\phi_k),
\end{eqnarray*}
using the evolution equations for curvature, by induction, we can see that the above
two quantities are bounded by a sum of terms which are products of $|\nabla^{r_1}\nabla_k
^{s_1}Rm_k|$, $|\nabla^{r_2}\nabla_k^{s_2}{\rm Rc_k}|$, $|\nabla^{r_3}\nabla_k^{s_3}R_k|$
and $|\nabla^{r_4}\nabla_k^{s_4}\phi_k|$. Hence we get
\[
\bigg|\frac{\partial^s}{\partial t^s}\nabla^rg_k\bigg|\le\tilde{C}_{r,s}',\quad
\bigg|\frac{\partial^s}{\partial t^s}\nabla^r\phi_k\bigg|\le\tilde{C}_{r,s}'',\quad
\mbox{on }K\times[\gamma,\delta].
\]
\end{proof}

\section{Singularities of connection Ricci flow}

\subsection{Maximal solution of connection Ricci flow}

Before studying the singularities of connection Ricci flow, first
giving the definition of maximal solution.

\begin{definition}
Suppose
$(g(t),H(t))_{t\in[0,T)}$ is a solution to the connection Ricci flow
(1.1) on $M^n$, where either $M^n$ is compact or at each time $t$
the metric $g(\cdot,t)$ is complete and has bounded curvature. We
say that $(g(t),H(t))$ is a maximal solution if either $T=\infty$ or
$T<\infty$ and
\[
\lim_{t\to T}\sup_{x\in M^n}|Rm(x,t)|=\infty.
\]
\end{definition}

\begin{remark}
In fact, on $M^n$, both the
curvature and the 3-form evolve under connection Ricci flow. Hence
the flow will end if any one of them goes to infinity. However,
according to Theorem 2.2, if $|H|^2$ goes to infinity, then it can
be indicated that $|Rm|$ must also go to infinity. So the definition
of maximal solution can still be given by the unboundedness of
$|Rm|$.
\end{remark}

\subsection{Three types of singularities and singularity models of connection Ricci flow}

By $\S$3.1, if connection Ricci flow has finite-time singularities, then
\[
\lim_{t\to T}\sup_{x\in M^n}|Rm(x,t)|=\infty,
\]
where $T$ is the maximum time.

However, unlike Ricci flow, for connection Ricci flow, we cannot know how $|Rm|$
goes to infinity near maximum time. To deal with this problem, we define a new
curvature quantity.

\noindent\textbf{Definition\;3.3.} \emph{For connection Ricci flow,
we call $P=|Rm|+|\nabla H|+|H|^2$ the \textbf{absolute compound curvature (AC curvature)}.}

Under this definition, if connection Ricci flow has finite-time singularities, then
\[
\lim_{t\to T}\sup_{x\in M^n}P(x,t)=\infty.
\]

\begin{remark}
By (2.1), the three terms in the AC curvature defined above are exactly terms in
the curvature on manifold which has $H$ as its torsion. So in the view of geometry,
the AC curvature stands for the real curvature bound here.
\end{remark}

\begin{proposition}
If $0\le t<T<\infty$ is the maximal interval
of existence of the solution $(M^n,g(t),H(t))$ to connection Ricci flow, there exists
a constant $c_0>0$ depending only on $n$ such that
\begin{equation}
\sup_{x\in M^n}P(x,t)\ge\frac{c_0}{T-t}.
\end{equation}
\end{proposition}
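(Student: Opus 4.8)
The plan is to derive a differential inequality of the form
\[
\frac{d}{dt}\Big(\sup_{M^n}P\Big)\le c_1\,\Big(\sup_{M^n}P\Big)^2
\]
(in the barrier/Lipschitz sense), for some constant $c_1$ depending only on $n$, and then integrate it backward from the singular time $T$. Indeed, if $P_{\max}(t):=\sup_{M^n}P(x,t)$ satisfies such an inequality, then $-\frac{d}{dt}P_{\max}^{-1}\le c_1$, so integrating over $[t,s]\subset[0,T)$ gives $P_{\max}(t)^{-1}-P_{\max}(s)^{-1}\le c_1(s-t)$; letting $s\to T$ and using $P_{\max}(s)\to\infty$ (which holds because the solution is maximal and, by Remark 3.1/Theorem 2.2, blow-up of $|Rm|$ is detected by $P$) yields $P_{\max}(t)^{-1}\le c_1(T-t)$, i.e. $P_{\max}(t)\ge c_0/(T-t)$ with $c_0=1/c_1$.

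So the real work is establishing the quadratic differential inequality for $P=|Rm|+|\nabla H|+|H|^2$. I would proceed term by term using the evolution equations of Proposition 2.1. For $|Rm|$, equation (2.3) gives $\partial_t|Rm|\le\Delta|Rm|+C_n|Rm|^2+(\text{terms involving }\nabla^2\mathcal H, Rm\ast\mathcal H)$; since $\mathcal H$ is quadratic in $H$ and $\nabla^2\mathcal H$ involves $|\nabla H|^2$, $|H||\nabla^2 H|$, etc., one needs to absorb these. This is where the definition of $P$ is tailored: $|\nabla H|$ and $|H|^2$ are exactly the ingredients needed so that all these cross terms are bounded by $C_n P^2$ — for instance $|\nabla^2 H|$ must be controlled, which is handled by also tracking the evolution of $|\nabla H|$ (whose PDE, from $\partial_t H=\Delta_{\rm LB}H=\Delta H+Rm\ast H$ and commuting a derivative, reads $\partial_t|\nabla H|\le\Delta|\nabla H|+|Rm\ast\nabla H|+|\nabla Rm\ast H|+\dots$). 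The term $|\nabla Rm|$ is the dangerous one; I expect to handle it either by including it under the protection of $|Rm|$ via interpolation on a short time interval, or — more in the spirit of Hamilton's original argument — by noting that for the \emph{supremum} it suffices to work at a spatial maximum where $\Delta P\le 0$ and the first-order terms vanish, and to invoke the derivative estimates of Theorem 2.3 to control $|\nabla Rm|$ and $|\nabla^2 H|$ in terms of lower-order quantities. Finally $\partial_t|H|^2=2\langle\Delta_{\rm LB}H,H\rangle+6\langle Rc,\mathcal H\rangle-\tfrac32|\mathcal H|^2$ from (2.8) contributes $\Delta|H|^2+C(|\nabla H|^2 + |Rm|\,|H|^2+|H|^4)\le\Delta|H|^2+C_n P^2$.

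Summing the three inequalities and evaluating at a point where $P$ attains its spatial maximum (so the Laplacian terms are $\le 0$), one obtains $\frac{d}{dt}P_{\max}\le c_1 P_{\max}^2$ in the sense of forward difference quotients; standard comparison (Hamilton's lemma on Lipschitz functions, or a regularization argument) then legitimizes the integration above. The main obstacle I anticipate is the bookkeeping of the $\nabla^2\mathcal H$ and $\nabla Rm$ terms: making sure that every term arising from the curvature and $H$ evolution equations is genuinely bounded by a constant (depending only on $n$) times $P^2$, and in particular that no genuinely higher-order quantity ($|\nabla Rm|$, $|\nabla^2 H|$) escapes control — which is why one must either enlarge $P$ implicitly via short-time derivative estimates or argue carefully at the maximum point. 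Once that is done the rest is the routine ODE comparison described above.
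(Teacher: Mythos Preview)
Your overall strategy---derive a scalar differential inequality for the spatial supremum and integrate it backward from $T$---is exactly right, and your final ODE step matches the paper's. The gap is precisely where you flag it: the ``dangerous'' terms $|\nabla Rm|$ and $|\nabla^2 H|$ that appear in the evolutions of $|Rm|$ and $|\nabla H|$. Neither of your proposed fixes closes this. Invoking Theorem~2.3 is circular for this purpose: those derivative estimates presuppose a curvature bound $K$ on a time interval of length $\beta/K$ and return constants depending on $K$, so applying them near $T$ (where $K\to\infty$) does not yield a dimensional constant $c_1$ in $\frac{d}{dt}P_{\max}\le c_1 P_{\max}^2$. And ``first-order terms vanish at the maximum'' helps with $\nabla P$, not with $|\nabla Rm|$ or $|\nabla^2 H|$, which are norms of tensors and do not vanish at a maximum of $P$.

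The paper's device is to work with the \emph{squares}: set $K(t)=\sup_{M^n}\big(|Rm|^2+|\nabla H|^2+|H|^4\big)$. The point is that the evolution of $|Rm|^2$ produces the good term $-2|\nabla Rm|^2$, and the evolution of $|\nabla H|^2$ produces $-2|\nabla^2 H|^2$. The only higher-order cross terms that arise are $C|\nabla Rm|\,|H|\,|\nabla H|$ (from $\partial_t\nabla H=\Delta\nabla H+\nabla Rm\ast H+\cdots$) and $C|Rm|\,|H|\,|\nabla^2 H|$ (from $\nabla^2\mathcal H$ in $\partial_t Rm$), and these are absorbed by Cauchy--Schwarz:
\[
C|\nabla Rm|\,|H|\,|\nabla H|\le |\nabla Rm|^2+C_1|H|^2|\nabla H|^2,\qquad
C|Rm|\,|H|\,|\nabla^2 H|\le |\nabla^2 H|^2+C_2|Rm|^2|H|^2.
\]
After this absorption every remaining term is bounded by $C_n\big(|Rm|^2+|\nabla H|^2+|H|^4\big)^{3/2}$, giving $\frac{d}{dt}K\le C_n K^{3/2}$ with a purely dimensional constant. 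Integrating yields $K(t)^{-1/2}\le\tfrac{C_n}{2}(T-t)$, and since $P\ge K^{1/2}$ the claimed lower bound follows. So the fix is not to appeal to derivative estimates but to choose the right auxiliary quantity so that the Bochner-type negative gradient terms do the work for you.
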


\begin{proof}
By Proposition 2.1,
\begin{eqnarray*}
\frac{\partial}{\partial t}Rm\!\!\!&=&\!\!\!\Delta Rm+Rm\ast
Rm+Rm\ast
H\ast H+H\ast\nabla^2H+\nabla H\ast\nabla H, \\
\frac{\partial}{\partial t}\nabla H\!\!\!&=&\!\!\!\Delta(\nabla
H)+Rm\ast\nabla H+\nabla Rm\ast H+H\ast H\ast\nabla H, \\
\frac{\partial}{\partial
t}|H|^2\!\!\!&\le&\!\!\!\Delta|H|^2-2|\nabla H|^2+C|Rm||H|^2+C|H|^4,
\end{eqnarray*}
so
\begin{eqnarray*}
\frac{\partial}{\partial
t}|Rm|^2\!\!\!&\le&\!\!\!\Delta|Rm|^2-2|\nabla
Rm|^2+C|Rm|^2|H|^2+C|Rm||H||\nabla^2H| \\
&&\!\!\!+C|Rm||\nabla H|^2+C|Rm|^3, \\
\frac{\partial}{\partial t}|\nabla
H|^2\!\!\!&\le&\!\!\!\Delta|\nabla H|^2-2|\nabla^2H|^2+C|Rm||\nabla
H|^2+C|\nabla Rm||H||\nabla H| \\
&&\!\!\!+C|H|^2|\nabla H|^2, \\
\frac{\partial}{\partial
t}|H|^4\!\!\!&\le&\!\!\!\Delta|H|^4+C|Rm||H|^4+C|H|^6.
\end{eqnarray*}
Then
\begin{eqnarray*}
&&\!\!\!\frac{\partial}{\partial t}(|Rm|^2+|\nabla H|^2+|H|^4) \\
&\le&\!\!\!\Delta(|Rm|^2+|\nabla H|^2+|H|^4)-2|\nabla
Rm|^2-2|\nabla^2H|^2
\\
&&\!\!\!+C|\nabla Rm||H||\nabla H|+C|Rm||H||\nabla^2H|+C|Rm||\nabla
H|^2 \\
&&\!\!\!+C|Rm|^2|H|^2+C|Rm||H|^4+C|H|^2|\nabla H|^2+C|Rm|^3+C|H|^6.
\end{eqnarray*}
Using Cauchy inequality,
\begin{eqnarray*}
|\nabla Rm|^2+C_1|H|^2|\nabla H|^2\!\!\!&\ge&\!\!\!C|\nabla Rm||H||\nabla H|, \\
|\nabla^2H|^2+C_2|Rm|^2|H|^2\!\!\!&\ge&\!\!\!C|Rm||H||\nabla^2H|.
\end{eqnarray*}
This gives
\begin{eqnarray*}
&&\!\!\!\frac{\partial}{\partial t}(|Rm|^2+|\nabla H|^2+|H|^4) \\
&\le&\!\!\!\Delta(|Rm|^2+|\nabla H|^2+|H|^4)+C|Rm||\nabla H|^2+C|Rm|^2|H|^2 \\
&&\!\!\!+C|Rm||H|^4+C|H|^2|\nabla H|^2+C|Rm|^3+C|H|^6 \\
&\le&\!\!\!\Delta(|Rm|^2+|\nabla H|^2+|H|^4)+C(|Rm|+|\nabla H|+|H|^2)^3 \\
&=&\!\!\!\Delta(|Rm|^2+|\nabla H|^2+|H|^4)+C[(|Rm|+|\nabla H|+|H|^2)^2]^{3/2} \\
&\le&\!\!\!\Delta(|Rm|^2+|\nabla H|^2+|H|^4)+C(|Rm|^2+|\nabla
H|^2+|H|^4)^{3/2}.
\end{eqnarray*}
$C,C_1,C_2$ above are all constants depending only on $n$. Whenever the supremum
is finite, we set
\[
K(t)=\sup_{x\in M^n}(|Rm(x,t)|^2+|\nabla H(x,t)|^2+|H(x,t)|^4).
\]
By the maximum principle, we have
\[
\frac{dK}{dt}\le CK^{\frac{3}{2}},
\]
which implies
\[
\frac{d}{dt}K^{-\frac{1}{2}}\ge-\frac{C}{2}.
\]
Integrating this inequality from $t$ to $\tau\in(t,T)$ and using the fact that
\[
\liminf_{\tau\to T}K(\tau)^{-\frac{1}{2}}=0,
\]
we obtain
\[
K(t)^{-\frac{1}{2}}\le\frac{C}{2}(T-t).
\]
Hence
\[
\sup_{x\in M^n}(|Rm(x,t)|^2+|\nabla H(x,t)|^2+|H(x,t)|^4)^{1/2}
\ge\frac{2}{C(T-t)}.
\]
Then as $P\ge(|Rm|^2+|\nabla H|^2+|H|^4)^{1/2}$, we finally get
\[
\sup_{x\in M^n}P(x,t)\ge\frac{c_0}{T-t}.
\]
\end{proof}

According to this result, we can classify the singular solutions of connection Ricci flow.

\begin{definition}
Define the following three types of singularities of connection Ricci flow
($T$ is the maximum time),

\textbf{Type} {\rm \textbf{\uppercase\expandafter{\romannumeral 1}}} \textbf{singularity:}
\[
T<\infty,\quad\sup_{M^n\times[0,T)}P\cdot(T-t)<\infty,
\]

\textbf{Type} {\rm \textbf{\uppercase\expandafter{\romannumeral 2}a}} \textbf{singularity:}
\[
T<\infty,\quad\sup_{M^n\times[0,T)}P\cdot(T-t)=\infty,
\]

\textbf{Type} {\rm \textbf{\uppercase\expandafter{\romannumeral 2}b}} \textbf{singularity:}
\[
T=\infty,\quad\sup_{M^n\times[0,\infty)}P\cdot t=\infty,
\]

\textbf{Type} {\rm \textbf{\uppercase\expandafter{\romannumeral 3}}} \textbf{singularity:}
\[
T=\infty,\quad\sup_{M^n\times[0,\infty)}P\cdot t<\infty.
\]
\end{definition}

Similar to Ricci flow, we can then give the definition of three singularity models.

\begin{definition}
A solution $(M^n,g(t),H(t))$ to the connection Ricci flow (1.1),
where either $M^n$ is compact, or at each time $t$, the metric $g(\cdot,t)$ is
complete and has bounded AC curvature, is called a \textbf{singularity model} if
it is not flat in the sense of AC curvature ($P\not\equiv 0$) and of one of the following
three types:

\textbf{Type} {\rm \textbf{\uppercase\expandafter{\romannumeral 1}}} \textbf{singularity model:}
The solution exists for $t\in(-\infty,\omega)$ for some constant $\omega$ with
$0<\omega<\infty$ and for any $x\in M^n$, any $t\in(-\infty,\omega)$,
\[
P(x,t)\le\frac{\omega}{\omega-t},
\]
with equality at $t=0$ and a point $y\in M^n$.

\textbf{Type} {\rm \textbf{\uppercase\expandafter{\romannumeral 2}}} \textbf{singularity model:}
The solution exists for $t\in(-\infty,+\infty)$, and for any $x\in M^n$, any
$t\in(-\infty,\omega)$,
\[
P(x,t)\le1,
\]
with equality at $t=0$ and a point $y\in M^n$.

\textbf{Type} {\rm \textbf{\uppercase\expandafter{\romannumeral 3}}} \textbf{singularity model:}
The solution exists for $t\in(-a,+\infty)$, for some constant $a$ with
$0<a<\infty$ and for any $x\in M^n$, any $t\in(-\infty,\omega)$,
\[
P(x,t)\le\frac{a}{a+t},
\]
with equality at $t=0$ and a point $y\in M^n$.
\end{definition}

In order to use compactness theorem (Theorem 2.4), we then define the injectivity
radius estimate.

\begin{definition}
A solution $(M^n,g(t),H(t))$ to the connection Ricci flow on the time interval
$[0,T)$ is said to satisfy an \textbf{injectivity radius estimate} if there exists a
constant $c_I>0$ such that
\[
{\rm inj}(x,t)^2\ge\frac{c_I}{\sup_{M^n}P(\cdot,t)}
\]
for all $(x,t)\in M^n\times[0,T)$.
\end{definition}

Following is the main result of this section.

\begin{theorem}
For any maximal solution to the connection Ricci flow which satisfies the
injectivity radius estimate and is of Type {\rm \uppercase\expandafter{\romannumeral 1}},
{\rm \uppercase\expandafter{\romannumeral 2}a}, {\rm \uppercase\expandafter{\romannumeral 2}b},
or {\rm \uppercase\expandafter{\romannumeral 3}}, there exists a sequence of dilations of
the solution which converges in the $C^\infty_{\rm loc}$ topology to a singularity model
of the corresponding type.
\end{theorem}

We will prove this theorem in next subsection.

\subsection{Convergence of the dilated solutions to connection Ricci flow}

Consider the solution $(M^n,g(t),H(t))$ to the connection Ricci flow (1.1), where either
$M^n$ is compact, or at each time $t$, the metric $g(\cdot,t)$ is complete and has bounded
AC curvature. To dilate, we choose $(x_i,t_i)$ such that
\begin{equation}
\sup_{M^n}P(\cdot,t_i)\ge c_1\sup_{M^n\times[s_i,t_i]}P,
\end{equation}
and
\begin{equation}
P(x_i,t_i)\ge c_2\sup_{M^n}P(\cdot,t_i),
\end{equation}
in which $c_1,c_2\in(0,1]$, and $s_i$ satisfies $(t_i-s_i)\sup_{M^n}P(\cdot,t_i)\to\infty$
(usually we set $s_i=0$).

Once we have chosen a sequence $\{(x_i,t_i)\}$ such that $t_i\nearrow T\in(0,\infty]$,
consider the solutions $(M^n,g_i(t),H_i(t))$ defined below
\begin{eqnarray}
g_i(t)\!\!\!&=&\!\!\!P(x_i,t_i)\cdot
g\Big(t_i+\frac{t}{P(x_i,t_i)}\Big),
\nonumber \\
H_i(t)\!\!\!&=&\!\!\!P(x_i,t_i)\cdot
H\Big(t_i+\frac{t}{P(x_i,t_i)}\Big),
\end{eqnarray}
in the time interval
\begin{equation}
-t_iP(x_i,t_i)\le t<(T-t_i)P(x_i,t_i).
\end{equation}
(The right endpoint is $\infty$ if $T=\infty$.) It can be easily verified that after this
dilation, they are still solutions to connection Ricci flow. And the AC curvature of the new
metric $g_i$ and 3-form $H_i$ has norm 1 at the point $x_i$ and the new time 0:
\begin{equation}
P[g_i,H_i](x_i,0)|_{g_i}=1.
\end{equation}
This guarantees that the limit of the pointed solutions $(M^n,g_i(t),H_i(t),x_i)$, if it
exists, will not be flat in the sense of AC curvature.

Assuming (3.2) and (3.3), one has the uniform AC curvature bound
\begin{equation}
P[g_i,H_i](x,t)\le\frac{1}{c_1c_2}
\end{equation}
for all $x\in M^n$ and $t\in[-(t_i-s_i)P(x_i,t_i),0]$. Combined with the above
injectivity radius estimate, by Theorem 2.4, there exists a subsequence of the
pointed sequence $(M^n,g_i(t),H_i(t),x_i)$ converges to a complete pointed solution $(M^n_\infty,g_\infty(t),H_\infty(t),x_\infty)$ to connection Ricci flow on an
ancient time interval $-\infty<t<\tau\le\infty$. This solution satisfies
\[
P[g_\infty,H_\infty](x,t)|_{g_\infty}\le\frac{1}{c_1c_2}
\]
for all $(x,t)\in M^n_\infty\times(-\infty,0]$.

\textbf{Limits of Type \uppercase\expandafter{\romannumeral 1} Singularities.}
Given a Type \uppercase\expandafter{\romannumeral 1} singular solution $(M^n,g(t),H(t))$
on $[0,T)$, define
\[
\Omega\triangleq\sup_{M^n\times[0,T)}P(x,t)\cdot(T-t)<\infty,
\]
and
\[
\omega\triangleq\limsup_{t\to T}P(\cdot,t)\cdot(T-t).
\]
By Proposition 3.4, we have $\omega\in[c_0,\Omega]\subset(0,\infty)$.

Taking a sequence of points and times $(x_i,t_i)$ with $t_i\nearrow T$ such that
\begin{equation}
P(x_i,t_i)\cdot(T-t_i)\triangleq\omega_i\to\omega.
\end{equation}
Consider the dilated solutions $(g_i(t),H_i(t))$ to connection Ricci flow defined
by (3.4). By the definition of $\omega$, there is for every $\epsilon>0$ a time
$t_\epsilon\in[0,T)$ such that
\[
P(x,t)\cdot(T-t)\le\omega+\epsilon
\]
for all $x\in M^n$ and $t\in[t_\epsilon,T)$. The AC curvature norm of $g_i$ and $H_i$
then satisfies
\begin{eqnarray}
P_i(x,t)\!\!\!&=&\!\!\!\frac{1}{P(x_i,t_i)}P\Big(x,t_i+\frac{t}{P(x_i,t_i)}\Big)
\nonumber \\
\!\!\!&=&\!\!\!\frac{P(x,t_i+\frac{t}{P(x_i,t_i)})\cdot(T-t_i-\frac{t}{P(x_i,t_i)})}
{P(x_i,t_i)\cdot(T-t_i)-t}
\nonumber \\
\!\!\!&\le&\!\!\!\frac{\omega+\epsilon}{\omega_i-t},
\end{eqnarray}
if $t_\epsilon\le t_i+t\cdot P(x_i,t_i)^{-1}<T$, hence if $t\in[-P(x_i,t_i)\cdot
(t_i-t_\epsilon),\omega_i)$.

Note that $\omega_i\to\omega$ and that $\lim_{i\to\infty}P(x_i,t_i)\cdot (t_i-t_\epsilon)
=\infty$ for any $\epsilon>0$. So if a pointed limit solution $(M^n_\infty,g_\infty(t),H_\infty(t),x_\infty)$ of a subsequence $\{M^n,g_i(t),H_i(t)\}$
exists, we can let $i\to\infty$ and then $\epsilon\to0$ in estimate (3.9) to conclude
that
\[
P_\infty(x,t)\le\frac{\omega}{\omega-t}
\]
for all $x\in M^n_\infty$ and $t\in(-\infty,\omega)$. Because $P_i(x_i,0)=1$ for all
$i$, the limit satisfies $P_\infty(x_\infty,0)=1$.

\textbf{Limits of Type \uppercase\expandafter{\romannumeral 2}a Singularities.}
Given a Type \uppercase\expandafter{\romannumeral 2}a singular solution
$(M^n,g(t),H(t))$ to connection Ricci flow on $[0,T)$, first let $T_i$ and $c'_i>0$
such that $T_i\nearrow T,\;c'_i\nearrow 1$ as $i\to\infty$. Then take a sequence
$\{(x_i,t_i)\}$, such that $t_i\nearrow T$, and
\begin{equation}
P(x_i,t_i)\cdot(T_i-t_i)\ge
c'_i\sup_{M^n\times[0,T_i]}P(x,t)\cdot(T_i-t).
\end{equation}

Consider the dilated solutions $(g_i(t),H_i(t))$ to connection Ricci flow defined
by (3.4). (3.10) implies that for $t\in[-t_iP(x_i,t_i),(T_i-t_i)P(x_i,t_i))$ we have
\begin{eqnarray*}
&&\!\!\!P_i(x,t)\Big(T_i-t_i-\frac{t}{P(x_i,t_i)}\Big) \\
&=&\!\!\!\frac{P(x,t_i+\frac{t}{P(x_i,t_i)})(T_i-t_i-\frac{t}{P(x_i,t_i)})}
{P(x_i,t_i)(T_i-t_i)}(T_i-t_i) \\
&\le&\!\!\!\frac{1}{c'_i}(T_i-t_i),
\end{eqnarray*}
or equivalently,
\[
P_i(x,t)\le\frac{1}{c'_i}\frac{(T_i-t_i)P(x_i,t_i)}{(T_i-t_i)P(x_i,t_i)-t}.
\]

Since $T_i\to T$ and
\[
\lim_{i\to\infty}[\sup_{M^n\times[0,T_i]}P(x,t)\cdot(T_i-t)]=\infty
\]
by the condition of Type \uppercase\expandafter{\romannumeral 2}a singularities,
so we have
\[
P(x_i,t_i)\cdot(T_i-t_i)\to\infty
\]
for the selected points and times $(x_i,t_i)$. Hence the pointed limit $(M^n_\infty,g_\infty(t),H_\infty(t),x_\infty)$, if it exists, is defined for all
$t\in(-\infty,\infty)$ and satisfies the uniform AC curvature bound
\[
\sup_{M^n_\infty\times(-\infty,\infty)}P_\infty\le1,
\]
with equality at $(x_\infty,0)$ because $P_i(x_i,0)=1$.

\textbf{Limits of Type \uppercase\expandafter{\romannumeral 2}b Singularities.}
The condition for this type of singularities is
\begin{equation}
\sup_{M^n\times[0,\infty)}t\cdot P(x,t)=\infty.
\end{equation}
Similar to the Type \uppercase\expandafter{\romannumeral 2}a case, let $T_j\to\infty$
and choose $(x_i,t_i)$ such that
\begin{equation}
\frac{t_i(T_i-t_i)\cdot
P(x_i,t_i)}{\sup_{M^n\times[0,T_i]}[t(T_i-t)\cdot
P(x,t)]}\triangleq1-\delta_i\to1.
\end{equation}

Define
\begin{eqnarray*}
\alpha_i\!\!\!&\triangleq&\!\!\!t_i\cdot P(x_i,t_i), \\
\omega'_i\!\!\!&\triangleq&\!\!\!(T_i-t_i)\cdot P(x_i,t_i).
\end{eqnarray*}
(3.11) and (3.12) guarantee that $\alpha_i\to\infty$ and $\omega'_i\to\infty$.
This is because
\begin{eqnarray*}
\frac{1}{\alpha_i^{-1}+\omega_i'^{-1}}\!\!\!&=&\!\!\!\frac
{\alpha_i\omega'_i}{\alpha_i+\omega'_i}=\frac{t_i\cdot
P(x_i,t_i)\cdot(T_i-t_i)}{T_i}
\\
\!\!\!&=&\!\!\!\frac{1}{T_i}\sup_{M^n\times[0,T_i]}t\cdot
P(x,t)\cdot(T_i-t) \\
\!\!\!&\ge&\!\!\!\frac{1}{2}\sup_{M^n\times[0,\frac{T_i}{2}]}t\cdot
P(x,t)\to\infty,
\end{eqnarray*}
hence $\alpha_i^{-1}\to0$ and $\omega_i'^{-1}\to0$. Meanwhile, for all $x\in M^n$
and $t\in[-\alpha_i,\omega_i')$, we have
\begin{eqnarray*}
P_i(x,t)\!\!\!&=&\!\!\!\frac{1}{P(x_i,t_i)}P\Big(x,t_i+\frac{t}{P(x_i,t_i)}\Big)
\\
\!\!\!&=&\!\!\!\frac{(t_i+\frac{t}{P(x_i,t_i)})(T_i-t_i-\frac{t}{P(x_i,t_i)})P(x,t_i+\frac{t}{P(x_i,t_i)})}
{t_i(T_i-t_i)P(x_i,t_i)} \\
&&\!\!\!\times\frac{t_i\cdot p(x_i,t_i)(T_i-t_i)}{(t_i\cdot
P(x_i,t_i)+t)(T_i-t_i-\frac{t}{P(x_i,t_i)})} \\
&\le&\!\!\!\frac{1}{1-\delta_i}\times\frac{\alpha_i}{\alpha_i+t}\frac{\omega_i'}{\omega_i'-t}.
\end{eqnarray*}
Since $\delta_i\to0,\;\alpha_i\to\infty$ and $\omega_i'\to\infty$, we conclude
that the pointed limit solution $(M^n_\infty,g_\infty(t),H_\infty(t),x_\infty)$,
if it exists, is defined for all $t\in(-\infty,\infty)$ and satisfies the AC curvature
bound
\[
\sup_{M^n_\infty\times(-\infty,\infty)}P_\infty\le1=P_\infty(x_\infty,0).
\]

\textbf{Limits of Type \uppercase\expandafter{\romannumeral 3} Singularities.}
If the solution $(M^n,g(t),H(t))$ to connection Ricci flow is a Type
\uppercase\expandafter{\romannumeral 3} singularity, then it exists for $t\in[0,\infty)$
and satisfies
\[
\sup_{M^n\times[0,\infty)}t\cdot P(\cdot,t)<\infty.
\]

Define
\[
a\triangleq\limsup_{t\to\infty}(t\cdot\sup_{M^n}P(\cdot,t))\in[0,\infty).
\]
First we need to show that $a$ is strictly positive. By the conclusion in Riemann geometry,
for a fixed curve $\gamma$ joining two points $p_1,p_2\in M^n$, under connection Ricci flow,
the length of the curve evolves like
\[
\frac{d}{dt}L_t(\gamma)=\frac{1}{2}\int_\gamma\frac{\partial
g}{\partial t}(\dot{\gamma},\dot{\gamma})ds=-\int_\gamma\Big({\rm
Rc}-\frac{1}{4}\mathcal{H}\Big)(\dot{\gamma},\dot{\gamma})ds.
\]
Next we argue by contradiction. Assume $a=0$, then we have

\textbf{Claim.} If there is an $\epsilon=\epsilon(n)>0$ small enough such that
\begin{equation}
a=\limsup_{t\to\infty}(t\cdot\sup_{M^n}P(\cdot,t))\le\epsilon,
\end{equation}
then there exist $C<\infty,\;\delta>0$ and $T_\epsilon<\infty$ depending only on
$n$ such that
\begin{equation}
{\rm diam}(M^n,g(t))\le Ct^{\frac{1}{2}-\delta}
\end{equation}
for all $t\ge T_\epsilon$.

To prove this claim, let $\gamma:[a,b]\to M^n$ be a fixed path, then
\[
\Big|\frac{dL}{dt}\Big|\le\int_\gamma\Big|{\rm
Rc}-\frac{1}{4}\mathcal{H}\Big|_{g(t)}ds.
\]
If (3.13) holds for some $\epsilon>0$, there exists a time $T_\epsilon<\infty$
such that for all $t>T_\epsilon$, we have
\[
t\cdot\sup_{M^n}P(\cdot,t)\le2\epsilon.
\]
In particular,
\[
\sup_{M^n}\Big|{\rm
Rc}-\frac{1}{4}\mathcal{H}(\cdot,t)\Big|\le\frac{C\epsilon}{t}
\]
for all $t\ge T_\epsilon$, where $C$ depends only on $n$. Hence
\[
\frac{dL}{dt}(\tau)\le\Big|\frac{dL}{dt}(\tau)\Big|\le\frac{C\epsilon}{\tau}L(\tau)
\]
for $\tau\ge T_\epsilon$. Integrating this inequality from time $T_\epsilon$ to time
$t>T_\epsilon$ implies
\[
L(t)\le
L(T_\epsilon)\Big(\frac{t}{T_\epsilon}\Big)^{C\epsilon}=L(T_\epsilon)T_\epsilon^{-C\epsilon}\cdot
t^{C\epsilon}.
\]
In particular, if we choose $0<\epsilon<\frac{1}{2C}$, then any two points in $(M^n,g(t))$
can be joined by a path of length
\[
L(t)\le{\rm diam}(M^n,g(T_\epsilon))\cdot
T_\epsilon^{-C\epsilon}\cdot t^{\frac{1}{2}-\delta},
\]
where $\delta\triangleq\frac{1}{2}-C\epsilon$. This implies (3.14) and proves the claim.

Using the claim, let $\Omega'\triangleq\sup_{M^n\times[0,\infty)}(t\cdot\sup_{M^n}P(\cdot,t))<\infty$,
then for any $t\in(0,\infty)$,
\[
\sup_{M^n}P(\cdot,t)\cdot{\rm
diam}(M^n,g(t))^2\le\Omega'C^2t^{-2\delta}.
\]
Since $\Omega'C^2t^{-2\delta}\to0$ as $t\to\infty$, this contradicts the injectivity radius
estimate! Hence our original assumption is false, that is $a>0$.

By the definition of $a$, there exist sequences $(x_i,t_i)$ with $t_i\to\infty$ such that
\[
a_i\triangleq t_i\cdot P(x_i,t_i)\to a.
\]
Choose any such sequence. Also by the definition of $a$, there is for any $\xi>0$ a
time $T_\xi\in[0,\infty)$ such that
\[
t\cdot P(x,t)\le a+\xi
\]
for all $x\in M^n$ and $t\in[T_\xi,\infty)$. The dilated solutions $(g_i(t),H_i(t))$
exist on the time intervals $[-a_i,\infty)$ and satisfy
\begin{eqnarray*}
P_i(x,t)\!\!\!&=&\!\!\!\frac{1}{P(x_i,t_i)}P\Big(x,t_i+\frac{t}{P(x_,t_i)}\Big)
\\
&=&\!\!\!\frac{P(x,t_i+\frac{t}{P(x_,t_i)})\cdot(t_i+\frac{t}{P(x_,t_i)})}{P(x_i,t_i)\cdot
t_i+t} \\
&\le&\!\!\!\frac{a+\xi}{a_i+t},
\end{eqnarray*}
if $t_i+t\cdot P(x_i,t_i)^{-1}\ge T_\xi$, that is if $t\ge P(x_i,t_i)\cdot(T_\xi-t_i)$.
Note that for any fixed $\xi>0$, we have
\[
P(x_i,t_i)\cdot(T_\xi-t_i)\to-a
\]
and
\[
\frac{a+\xi}{a_i+t}\to\frac{a+\xi}{a+t}
\]
for $t>-a$. Hence the pointed limit solution $(M^n_\infty,g_\infty(t),H_\infty(t),x_\infty)$,
if it exists, is defined for $t\in(-a,\infty)$ and satisfies
\[
\sup_{M^n_\infty\times(-a,\infty)}P_\infty\le\frac{a}{a+t}=P_\infty(x_\infty,0).
\]

To sum up, we finish the proof of Theorem 3.8.

\section{Singularities of Ricci harmonic flow}

\subsection{Singularity models and convergence of dilated solutions}

By Theorem 2.8, if Ricci harmonic flow has finite-time
singularities, then
\[
\lim_{t\to T}\sup_{x\in M^m}|Rm(x,t)|=\infty,
\]
where $T$ is the maximum time.

Similar to the case of connection Ricci flow, define an AC curvature
\[
Q=|Rm|+|\nabla^2\phi|+|\nabla\phi|^2.
\]

Similarly, if Ricci harmonic flow has finite-time singularities,
then
\[
\lim_{t\to T}\sup_{x\in M^m}Q(x,t)=\infty.
\]
Besides, here we also have

\begin{proposition}
If $0\le t<T<\infty$ is
the maximal interval of existence of the solution
$(M^m,g(t),\phi(t))$ to Ricci harmonic flow, there exists a constant
$c_0>0$ depending only on $m,\alpha$ and the curvature of manifold
$N^n$ such that
\[
\sup_{x\in M^m}Q(x,t)\ge\frac{c_0}{T-t}.
\]
\end{proposition}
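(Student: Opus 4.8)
The plan is to imitate the proof of Proposition 3.5 for connection Ricci flow, now working with $Q=|Rm|+|\nabla^2\phi|+|\nabla\phi|^2$ in place of $P$. First I would use the evolution equations from Proposition 2.6 (together with the standard Bochner-type identity $\frac{\partial}{\partial t}\nabla^2\phi=\Delta\nabla^2\phi+Rm\ast\nabla^2\phi+{}^N\!Rm\ast\nabla\phi\ast\nabla\phi\ast\nabla^2\phi+\nabla({}^N\!Rm\ast\nabla\phi\ast\nabla\phi\ast\nabla\phi)$, whose curvature terms on $N^n$ are controlled by the bounded geometry of the fixed target) to derive schematic reaction-diffusion inequalities
\begin{eqnarray*}
\frac{\partial}{\partial t}|Rm|^2&\le&\Delta|Rm|^2-2|\nabla Rm|^2+C(|Rm|+|\nabla^2\phi|+|\nabla\phi|^2)^3,\\
\frac{\partial}{\partial t}|\nabla^2\phi|^2&\le&\Delta|\nabla^2\phi|^2-2|\nabla^3\phi|^2+C(|Rm|+|\nabla^2\phi|+|\nabla\phi|^2)\,(|\nabla^2\phi|^2+\cdots),\\
\frac{\partial}{\partial t}|\nabla\phi|^4&\le&\Delta|\nabla\phi|^4+C(|Rm|+|\nabla^2\phi|+|\nabla\phi|^2)\,|\nabla\phi|^4,
\end{eqnarray*}
exactly as in the $P$-computation, where $C$ depends only on $m$, $\alpha$ and the curvature bounds of $N^n$. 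I use (2.9)--(2.13) for the first and third lines; for the cross-terms I absorb them with Cauchy--Schwarz against the good gradient terms $|\nabla Rm|^2$, $|\nabla^3\phi|^2$, just as the $|\nabla Rm||H||\nabla H|$ and $|Rm||H||\nabla^2 H|$ terms were absorbed in Proposition 3.5.

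Next I would add the three inequalities and observe that all the surviving reaction terms are cubic in $Q$ in the sense that their sum is bounded by $C\,(|Rm|+|\nabla^2\phi|+|\nabla\phi|^2)^3=C\,\bigl[(|Rm|+|\nabla^2\phi|+|\nabla\phi|^2)^2\bigr]^{3/2}\le C\,(|Rm|^2+|\nabla^2\phi|^2+|\nabla\phi|^4)^{3/2}$. Hence, setting $K(t)=\sup_{M^m}(|Rm|^2+|\nabla^2\phi|^2+|\nabla\phi|^4)$, the maximum principle (valid here since $M^m$ is compact, or in the complete bounded-curvature case by the usual localization) yields $\frac{dK}{dt}\le CK^{3/2}$, so $\frac{d}{dt}K^{-1/2}\ge-\frac{C}{2}$. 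Integrating from $t$ to $\tau\nearrow T$ and using $\liminf_{\tau\to T}K(\tau)^{-1/2}=0$ (which holds because $T$ is the maximal time and $Q\to\infty$), we get $K(t)^{-1/2}\le\frac{C}{2}(T-t)$, i.e. $\sup_{M^m}(|Rm|^2+|\nabla^2\phi|^2+|\nabla\phi|^4)^{1/2}\ge\frac{2}{C(T-t)}$; since $Q\ge(|Rm|^2+|\nabla^2\phi|^2+|\nabla\phi|^4)^{1/2}$ this gives $\sup_{M^m}Q(\cdot,t)\ge\frac{c_0}{T-t}$ with $c_0=2/C$.

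The one genuinely new point compared with the $P$-case is the presence of the target curvature ${}^N\!Rm$ in every evolution equation of Proposition 2.6 and in the $\nabla^2\phi$ evolution. The main obstacle is therefore bookkeeping the ${}^N\!Rm$-terms: one must check that each of them is, after using $|{}^N\!Rm|\le \Lambda$ for a constant $\Lambda$ depending on $N^n$ (which is legitimate, the target being fixed), dominated by the same cubic-in-$Q$ expression — the worst term being $|{}^N\!Rm||\nabla\phi|^2|\nabla^2\phi|^2\le \Lambda\,Q^2\cdot|\nabla^2\phi|\le\Lambda Q^3$, and $\nabla({}^N\!Rm\ast\nabla\phi^{\otimes 3})$ contributing $|\nabla^2\phi||\nabla\phi|^3\le Q\cdot Q\cdot|\nabla\phi|\le Q^{5/2}\le Q^3$ once $Q\ge 1$ (and trivially controlled otherwise). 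This is exactly why the constant $c_0$ is allowed to depend on $\alpha$ and on the curvature of $N^n$, as stated. With these terms absorbed, the argument is identical in structure to Proposition 3.5 and the conclusion follows.
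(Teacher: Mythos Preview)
Your proposal is correct and follows essentially the same approach as the paper. The paper's own proof is very terse: it records exactly the three differential inequalities you describe (for $|Rm|^2$, $|\nabla^2\phi|^2$, $|\nabla\phi|^4$), states that ``after computation'' their sum is bounded by $\Delta(\cdot)+C(\cdot)^{3/2}$, and then refers back to the ODE/maximum-principle argument of the connection Ricci flow proposition --- precisely your plan, including the step $Q\ge(|Rm|^2+|\nabla^2\phi|^2+|\nabla\phi|^4)^{1/2}$. Your extra discussion of the ${}^N\!Rm$ bookkeeping and the Cauchy--Schwarz absorption of cross terms (e.g.\ the $|\nabla Rm||\nabla\phi||\nabla^2\phi|$ contribution) simply fills in details the paper leaves implicit; the one place you hedge, the $Q^{5/2}$-type term, is harmless since near blow-up it is dominated by $Q^3$ and away from blow-up contributes only a bounded additive constant to the ODE, which does not affect the conclusion.
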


\begin{proof}
By Proposition 2.5,
\begin{eqnarray*}
\frac{\partial}{\partial
t}|Rm|^2\!\!\!&\le&\!\!\!\Delta|Rm|^2-2|\nabla
Rm|^2+C|Rm|^2|\nabla\phi|^2+C|Rm||\nabla^2\phi|^2 \\
&&\!\!\!+C|Rm||\nabla\phi|^4+C|Rm|^3, \\
\frac{\partial}{\partial t}|\nabla^2\phi|^2\!\!\!&\le&\!\!\!
\Delta|\nabla^2\phi|^2-2|\nabla^3\phi|^2+C|Rm||\nabla^2\phi|^2
+C|\nabla^2\phi|^2|\nabla\phi|^2 \\
&&\!\!\!+C|\nabla^2\phi||\nabla\phi|^4, \\
\frac{\partial}{\partial t}|\nabla\phi|^4\!\!\!&\le&\!\!\!
\Delta|\nabla\phi|^4+C|\nabla\phi|^6.
\end{eqnarray*}
After computation, we can also get
\begin{eqnarray*}
&&\!\!\!\frac{\partial}{\partial t}(|Rm|^2+|\nabla^2\phi|^2+|\nabla\phi|^4) \\
&\le&\!\!\!\Delta(|Rm|^2+|\nabla^2\phi|^2+|\nabla\phi|^4)
+C(|Rm|^2+|\nabla^2\phi|^2+|\nabla\phi|^4)^{3/2}
\end{eqnarray*}
for a constant $C>0$ depending only on $m,\alpha$ and the curvature of manifold
$N^n$. The following is like that in the proof of Proposition 3.4.
\end{proof}

Replace the $P$ in the case of connection Ricci flow by $Q$, we can then obtain
the classification of singular solutions, singularity models and convergence of
dilated solutions of Ricci harmonic flow.

\begin{definition}
Define the following three types of singularities of Ricci harmonic flow
($T$ is the maximum time),

\textbf{Type} {\rm \textbf{\uppercase\expandafter{\romannumeral 1}}} \textbf{singularity:}
\[
T<\infty,\quad\sup_{M^m\times[0,T)}Q\cdot(T-t)<\infty,
\]

\textbf{Type} {\rm \textbf{\uppercase\expandafter{\romannumeral 2}a}} \textbf{singularity:}
\[
T<\infty,\quad\sup_{M^m\times[0,T)}Q\cdot(T-t)=\infty,
\]

\textbf{Type} {\rm \textbf{\uppercase\expandafter{\romannumeral 2}b}} \textbf{singularity:}
\[
T=\infty,\quad\sup_{M^m\times[0,\infty)}Q\cdot t=\infty,
\]

\textbf{Type} {\rm \textbf{\uppercase\expandafter{\romannumeral 3}}} \textbf{singularity:}
\[
T=\infty,\quad\sup_{M^m\times[0,\infty)}Q\cdot t<\infty.
\]
\end{definition}

Then the definition of the corresponding three types of singularity models

\begin{definition}
A solution $(M^m,g(t),\phi(t))$ to the Ricci harmonic flow (1.2),
where either $M^m$ is compact, or at each time $t$, the metric $g(\cdot,t)$ is
complete and has bounded AC curvature, is called a \textbf{singularity model} if
it is not flat in the sense of AC curvature ($Q\not\equiv 0$) and of one of the following
three types:

\textbf{Type} {\rm \textbf{\uppercase\expandafter{\romannumeral 1}}} \textbf{singularity model:}
The solution exists for $t\in(-\infty,\omega)$ for some constant $\omega$ with
$0<\omega<\infty$ and for any $x\in M^m$, any $t\in(-\infty,\omega)$,
\[
Q(x,t)\le\frac{\omega}{\omega-t},
\]
with equality at $t=0$ and a point $y\in M^m$.

\textbf{Type} {\rm \textbf{\uppercase\expandafter{\romannumeral 2}}} \textbf{singularity model:}
The solution exists for $t\in(-\infty,+\infty)$, and for any $x\in M^m$, any
$t\in(-\infty,\omega)$,
\[
Q(x,t)\le1,
\]
with equality at $t=0$ and a point $y\in M^m$.

\textbf{Type} {\rm \textbf{\uppercase\expandafter{\romannumeral 3}}} \textbf{singularity model:}
The solution exists for $t\in(-a,+\infty)$, for some constant $a$ with
$0<a<\infty$ and for any $x\in M^m$, any $t\in(-\infty,\omega)$,
\[
Q(x,t)\le\frac{a}{a+t},
\]
with equality at $t=0$ and a point $y\in M^m$.
\end{definition}

The injectivity radius estimate for Ricci harmonic flow,

\begin{definition}
A solution $(M^m,g(t),\phi(t))$ to Ricci harmonic flow on the time interval
$[0,T)$ is said to satisfy an \textbf{injectivity radius estimate} if there exists a
constant $c_I>0$ such that
\[
{\rm inj}(x,t)^2\ge\frac{c_I}{\sup_{M^m}Q(\cdot,t)}.
\]
for all $(x,t)\in M^m\times[0,T)$.
\end{definition}

Then we give the convergence of dilated solutions.

\begin{theorem}
For any maximal solution to Ricci harmonic flow which satisfies the injectivity
radius estimate and is of Type {\rm \uppercase\expandafter{\romannumeral 1}},
{\rm \uppercase\expandafter{\romannumeral 2}a}, {\rm \uppercase\expandafter{\romannumeral 2}b},
or {\rm \uppercase\expandafter{\romannumeral 3}}, there exists a sequence of dilations of
the solution which converges in the $C^\infty_{\rm loc}$ topology to a singularity model
of the corresponding type.
\end{theorem}

For Ricci harmonic flow, the dilated solutions are defined as
\begin{eqnarray}
g_i(t)\!\!\!&=&\!\!\!Q(x_i,t_i)\cdot
g\Big(t_i+\frac{t}{Q(x_i,t_i)}\Big),
\nonumber \\
\phi_i(t)\!\!\!&=&\!\!\!\phi\Big(t_i+\frac{t}{Q(x_i,t_i)}\Big),
\end{eqnarray}
And note that we have established the compactness theorem (Theorem 2.9) for Ricci harmonic 
flow. The rest of the proof is the same as that in connection Ricci flow.

\subsection{Compact blow-up limits of finite-time singularities are shrinking solitons}

Analogous to the work for Ricci flow in [Zhang], we can connect the
finite-time singularities of Ricci harmonic flow with shrinking
Ricci harmonic solitons.

\begin{theorem}
Let
$(g(t),\phi(t))_{t\in[0,T)}$, be a maximal solution to the Ricci
harmonic flow (1.2) on a closed manifold $M^m$ with singular time
$T<\infty$. Let $t_k\to T$ be a sequence of times such that
$Q_k=Q(p_k,t_k)\to\infty$. If the rescaled sequence
$(M^m,Q_kg(t_k+Q_k^{-1}t),\phi(t_k+Q_k^{-1}t),p_k)$ converges in the
$C^\infty$ sense to a closed ancient solution
$(M^m_\infty,g_\infty(t),\phi_\infty(t),p_{\infty})$ to the Ricci
harmonic flow, then $(g_\infty(t),\phi_\infty(t))$ must be a
shrinking Ricci harmonic soliton.
\end{theorem}

First we derive some estimates about $\lambda_\alpha$ and
$\mu_\alpha$ functionals.

\begin{lemma}
We have the upper bound
\begin{equation}
\mu_\alpha(g,\phi,\tau)\le\tau\lambda_\alpha(g,\phi)+{\rm
Vol}(g)-\frac{m}{2} \ln(4\pi\tau)-m,
\end{equation}
and the lower bound for $\tau>\frac{m}{8}$,
\begin{equation}
\mu_\alpha(g,\phi,\tau)\ge\tau\lambda_\alpha-\frac{m}{2}\ln(4\pi\tau)-m-\frac{m}{8}
(\lambda_\alpha-\inf\{R-\alpha|\nabla\phi|^2\})-m\ln C_s,
\end{equation}
where $C_s$ denotes the Sobolev constant for $g$ such that
$\|\psi\|_{L^ {\frac{2m}{m-2}}(g)}\le C_s\|\psi\|_{H^{1,2}(g)}$ for
all $\psi\in C^{\infty}(M^m)$.
\end{lemma}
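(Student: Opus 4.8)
The plan is to prove the upper bound (4.2) and the lower bound (4.3) separately. Throughout write
\[
\mathcal{W}_\alpha(g,\phi,f,\tau)=\int_{M^m}\bigl[\tau(R+|\nabla f|^2-\alpha|\nabla\phi|^2)+f-m\bigr](4\pi\tau)^{-m/2}e^{-f}\,dV ,
\]
so that $\mu_\alpha(g,\phi,\tau)$ is the infimum of $\mathcal{W}_\alpha(g,\phi,f,\tau)$ over all $f$ with $\int_{M^m}(4\pi\tau)^{-m/2}e^{-f}\,dV=1$: the upper bound follows by inserting one good test function, the lower bound by estimating $\mathcal{W}_\alpha$ from below for every admissible $f$.

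For (4.2), let $f_0$ be a minimizer of $\lambda_\alpha(g,\phi)$; on the closed manifold $M^m$ one may take $f_0=-2\ln w_0$ with $w_0>0$ the first eigenfunction of $-4\Delta+R-\alpha|\nabla\phi|^2$ normalized by $\int_{M^m}e^{-f_0}\,dV=1$. Put $f=f_0-\tfrac m2\ln(4\pi\tau)$: the constraint then holds, $|\nabla f|^2=|\nabla f_0|^2$, and $(4\pi\tau)^{-m/2}e^{-f}\,dV=e^{-f_0}\,dV$, so
\[
\mu_\alpha(g,\phi,\tau)\le\mathcal{W}_\alpha(g,\phi,f,\tau)=\tau\lambda_\alpha(g,\phi)+\int_{M^m}f_0e^{-f_0}\,dV-\tfrac m2\ln(4\pi\tau)-m .
\]
It remains to control the entropy term: with $u=e^{-f_0}$ and $\int_{M^m}u\,dV=1$, Jensen's inequality for the convex function $x\mapsto x\ln x$ against $dV/\mathrm{Vol}(g)$ gives $\int_{M^m}f_0e^{-f_0}\,dV=-\int_{M^m}u\ln u\,dV\le\ln\mathrm{Vol}(g)\le\mathrm{Vol}(g)$, which is (4.2). (If a smooth minimizer is unavailable, run the same computation with near-minimizers and pass to the limit; the entropy bound holds for every competitor.)

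For (4.3), fix an admissible $f$ and substitute $w=(4\pi\tau)^{-m/4}e^{-f/2}$, so $\int_{M^m}w^2\,dV=1$, $\int_{M^m}|\nabla f|^2(4\pi\tau)^{-m/2}e^{-f}\,dV=4\int_{M^m}|\nabla w|^2\,dV$, and $f=-\ln w^2-\tfrac m2\ln(4\pi\tau)$. A direct rewriting gives
\[
\mathcal{W}_\alpha(g,\phi,f,\tau)+\tfrac m2\ln(4\pi\tau)+m=\tau\!\int_{M^m}\!(R-\alpha|\nabla\phi|^2)w^2dV+4\tau\!\int_{M^m}\!|\nabla w|^2dV-\int_{M^m}\!w^2\ln w^2\,dV .
\]
Split $\tau=(\tau-\tfrac m8)+\tfrac m8$, which is legitimate because $\tau>\tfrac m8$. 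The $(\tau-\tfrac m8)$-block is $(\tau-\tfrac m8)\bigl[\int(R-\alpha|\nabla\phi|^2)w^2+4\int|\nabla w|^2\bigr]\ge(\tau-\tfrac m8)\lambda_\alpha$, because the bracket is exactly the Rayleigh quotient defining $\lambda_\alpha$ and $\tau-\tfrac m8\ge0$. The $\tfrac m8$-block equals $\tfrac m8\int(R-\alpha|\nabla\phi|^2)w^2dV+\tfrac m2\int|\nabla w|^2dV-\int w^2\ln w^2dV$ (using $4\cdot\tfrac m8=\tfrac m2$), and bounding $R-\alpha|\nabla\phi|^2\ge\inf\{R-\alpha|\nabla\phi|^2\}$ together with $\int w^2\,dV=1$ turns its first term into $\tfrac m8\inf\{R-\alpha|\nabla\phi|^2\}$.

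Finally, Jensen's inequality for $\ln$ against the probability measure $w^2\,dV$ gives $\int_{M^m}w^2\ln w^2\,dV\le\tfrac m2\ln\|w\|_{L^{2m/(m-2)}}^2$; combined with $\|w\|_{L^{2m/(m-2)}}^2\le C_s^2(\|\nabla w\|_{L^2}^2+1)$ and $\ln x\le x-1$ this yields $\int_{M^m}w^2\ln w^2\,dV\le m\ln C_s+\tfrac m2\|\nabla w\|_{L^2}^2$, so $\tfrac m2\int|\nabla w|^2\,dV-\int w^2\ln w^2\,dV\ge-m\ln C_s$. Adding the two blocks, $\mathcal{W}_\alpha(g,\phi,f,\tau)\ge\tau\lambda_\alpha-\tfrac m2\ln(4\pi\tau)-m-\tfrac m8\bigl(\lambda_\alpha-\inf\{R-\alpha|\nabla\phi|^2\}\bigr)-m\ln C_s$ for every admissible $f$, and the infimum over $f$ gives (4.3). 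The \textbf{main obstacle} to a clean argument is the constant bookkeeping in this last step: one must realize that peeling off exactly $\tfrac m8$ of $\tau$ makes the surplus gradient coefficient equal $\tfrac m2$, which is precisely the power produced by Jensen plus the Sobolev inequality, so that $\|\nabla w\|_{L^2}^2-\ln(\|\nabla w\|_{L^2}^2+1)\ge0$ absorbs the gradient term with nothing left over.
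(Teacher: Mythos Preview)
Your proof is correct and follows essentially the same approach as the paper. Both arguments make the substitution $w=(4\pi\tau)^{-m/4}e^{-f/2}$, use the first eigenfunction as test function for the upper bound, and for the lower bound combine Jensen's inequality with the Sobolev embedding to control the entropy term, then peel off exactly $\tfrac{m}{8}$ of $\tau$ so that the resulting $\tfrac{m}{2}\int|\nabla w|^2$ cancels the gradient contribution coming from $\ln(1+\|\nabla w\|_{L^2}^2)\le\|\nabla w\|_{L^2}^2$; the only cosmetic differences are that the paper bounds $-\int w^2\ln w^2\,dV\le\mathrm{Vol}(g)$ via the pointwise inequality $-t\ln t\le 1$ (whereas you use Jensen to get the slightly sharper $\ln\mathrm{Vol}(g)$ and then weaken it), and the paper works directly with the minimizer of $\mu_\alpha$ rather than an arbitrary admissible $f$.
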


\begin{proof}
By definition, set
$u=(4\pi\tau)^{-m/4}e^{-f/2}$, then $\int_Mu^2dV=1$ and so
\begin{eqnarray*}
&&\!\!\!\int_{M^m}[\tau(R+|\nabla f|^2-\alpha|\nabla\phi|^2)+f-m](4\pi\tau)^{-m/2}e^{-f}dV \\
&=&\!\!\!\tau\int_{M^m}(Ru^2+4|\nabla
u|^2-\alpha|\nabla\phi|^2u^2)dV-\int_{M^m}u^2\ln u^2dV
-\frac{m}{2}\ln(4\pi\tau)-m \\
&\le&\!\!\!\tau\int_{M^m}(Ru^2+4|\nabla
u|^2-\alpha|\nabla\phi|^2u^2)dV+{\rm Vol}(g)
-\frac{m}{2}\ln(4\pi\tau)-m,
\end{eqnarray*}
where we used that $-t\ln t\le1$ for all $\tau>0$. The upper bound
follows by choosing $f$ such that $u$ is the eigenfunction of the
first eigenvalue of $-4\Delta+R-\alpha|\nabla\phi|^2$.

As for the lower bound, let $\bar f$ be the minimizer of
$\mu_\alpha(g,\phi,\tau)$ for fixed $\tau>0$ and set $\bar
u=(4\pi\tau)^{-m/4}e^{-\bar f/2}$. We estimate the term
$-\int_{M^m}\bar u^2\ln\bar u^2dV$ by
\begin{eqnarray*}
-\int_{M^m}\bar u^2\ln\bar
u^2dV\!\!\!&=&\!\!\!-\frac{m-2}{2}\int_{M^m}\bar u^2\ln\bar u^{
\frac{4}{m-2}}dV\ge-m\ln\|\bar u\|_{L^{\frac{2m}{m-2}}(g)} \\
\!\!\!&\ge&\!\!\!-\frac{m}{2}\ln\bigg(1+\int_{M^m}|\nabla\bar
u|^2dV\bigg)-m\ln C_s,
\end{eqnarray*}
where we used the Jensen and Sobolev inequality in the first and the
second inequality. Then we have
\begin{eqnarray*}
&&\!\!\!\mu_\alpha(g,\phi,\tau) \\
&=&\!\!\!\tau\int_{M^m}(R\bar u^2+4|\nabla\bar
u|^2-\alpha|\nabla\phi|^2\bar u^2)dV
-\int_{M^m}\bar u^2\ln\bar u^2dV-\frac{m}{2}\ln(4\pi\tau)-m \\
&\ge&\!\!\!\tau\int_{M^m}(R\bar u^2+4|\nabla\bar
u|^2-\alpha|\nabla\phi|^2\bar
u^2)dV -\frac{m}{2}\ln\bigg(1+\int_{M^m}|\nabla\bar u|^2dV\bigg) \\
&&\!-\frac{m}{2}\ln(4\pi\tau)-m-m\ln C_s \\
&\ge&\!\!\!\bigg(\tau-\frac{m}{8}\bigg)\int_{M^m}(R\bar
u^2+4|\nabla\bar u|^2-\alpha |\nabla\phi|^2\bar
u^2)dV+\frac{m}{8}\int_{M^m}(R-\alpha|\nabla\phi|^2)\bar u^2dV \\
&&\!-\frac{m}{2}\ln(4\pi\tau)-m-m\ln C_s,
\end{eqnarray*}
which proves the lower bound if we set $\tau>\frac{m}{8}$.
\end{proof}

\begin{corollary}
\[
\nu_\alpha(g,\phi)=-\infty\iff\lambda_\alpha(g,\phi)\le0,\quad\nu_\alpha(g,\phi)
\ne-\infty\iff\lambda_\alpha(g,\phi)>0.
\]
\end{corollary}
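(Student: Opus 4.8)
The plan is to derive the corollary directly from the two-sided bounds in Lemma 4.10, using the definition $\nu_\alpha(g,\phi)=\inf_{\tau>0}\mu_\alpha(g,\phi,\tau)$. The key observation is that both bounds have the same leading behavior $\tau\lambda_\alpha(g,\phi)-\frac{m}{2}\ln(4\pi\tau)$ as $\tau\to\infty$, while the remaining terms are bounded independent of $\tau$ (the Sobolev constant $C_s$, the volume, and $\inf\{R-\alpha|\nabla\phi|^2\}$ all depend only on the fixed data $(g,\phi)$, not on $\tau$). Since $-\frac{m}{2}\ln(4\pi\tau)\to-\infty$ but only logarithmically, the sign of $\lambda_\alpha(g,\phi)$ controls whether $\mu_\alpha(g,\phi,\tau)\to-\infty$ or stays bounded below as $\tau\to\infty$.

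First I would treat the case $\lambda_\alpha(g,\phi)\le 0$. From the upper bound (4.2), $\mu_\alpha(g,\phi,\tau)\le\tau\lambda_\alpha(g,\phi)+\mathrm{Vol}(g)-\frac{m}{2}\ln(4\pi\tau)-m$. If $\lambda_\alpha<0$, the term $\tau\lambda_\alpha\to-\infty$ linearly, so the right-hand side tends to $-\infty$ as $\tau\to\infty$, hence $\nu_\alpha=\inf_\tau\mu_\alpha=-\infty$. If $\lambda_\alpha=0$, the right-hand side is $\mathrm{Vol}(g)-\frac{m}{2}\ln(4\pi\tau)-m$, which still tends to $-\infty$ (logarithmically) as $\tau\to\infty$, so again $\nu_\alpha=-\infty$. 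This establishes the implication $\lambda_\alpha(g,\phi)\le 0\Rightarrow\nu_\alpha(g,\phi)=-\infty$.

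Next I would treat the case $\lambda_\alpha(g,\phi)>0$, and show $\nu_\alpha(g,\phi)\ne-\infty$, i.e.\ $\mu_\alpha(g,\phi,\tau)$ is bounded below uniformly in $\tau>0$. I split the range of $\tau$: for $\tau>\frac{m}{8}$, the lower bound (4.3) reads $\mu_\alpha(g,\phi,\tau)\ge\tau\lambda_\alpha-\frac{m}{2}\ln(4\pi\tau)-m-\frac{m}{8}(\lambda_\alpha-\inf\{R-\alpha|\nabla\phi|^2\})-m\ln C_s$; since $\lambda_\alpha>0$, the function $\tau\mapsto\tau\lambda_\alpha-\frac{m}{2}\ln(4\pi\tau)$ is bounded below on $(\frac{m}{8},\infty)$ (it tends to $+\infty$ and is continuous), so $\mu_\alpha$ is bounded below there. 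For $0<\tau\le\frac{m}{8}$ one needs a lower bound too; here I would either invoke that the minimizer exists (stated in the paper, "there exists a smooth minimizer of $\mu_\alpha(g,\phi,\tau)$") together with continuity of $\mu_\alpha$ in $\tau$ on the compact interval $(0,\frac{m}{8}]$ — more precisely, note $\mu_\alpha(g,\phi,\tau)+\frac{m}{2}\ln(4\pi\tau)$ extends continuously as $\tau\to 0^+$ — or re-run the Sobolev/log-Sobolev estimate without the restriction $\tau>\frac{m}{8}$ to get a crude uniform lower bound on the small-$\tau$ range. Either way $\mu_\alpha$ is bounded below on all of $(0,\infty)$, so $\nu_\alpha>-\infty$.

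Finally, the two biconditionals follow formally: the two implications proved are $\lambda_\alpha\le 0\Rightarrow\nu_\alpha=-\infty$ and $\lambda_\alpha>0\Rightarrow\nu_\alpha\ne-\infty$; since $\{\lambda_\alpha\le 0\}$ and $\{\lambda_\alpha>0\}$ partition all cases, the converses hold automatically, giving both stated equivalences. I expect the main obstacle to be the small-$\tau$ regime $0<\tau\le\frac{m}{8}$, which Lemma 4.10's lower bound does not directly cover; handling it cleanly requires either a separate elementary estimate or a careful appeal to the existence and regularity of the minimizer together with continuity of $\mu_\alpha(g,\phi,\cdot)$ down to $\tau=0^+$.
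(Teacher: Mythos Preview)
Your proposal is correct and follows essentially the same approach as the paper: use the upper bound (4.2) to force $\mu_\alpha\to-\infty$ as $\tau\to\infty$ when $\lambda_\alpha\le 0$, and use the lower bound (4.3) together with a separate small-$\tau$ argument when $\lambda_\alpha>0$. The paper handles the small-$\tau$ regime exactly as you anticipated---by citing the existence of a smooth minimizer (from [Ro]) to get continuity of $\mu_\alpha(g,\phi,\cdot)$ and then invoking the known behavior $\mu_\alpha(g,\phi,\tau)>-\infty$ as $\tau\to 0^+$ by analogy with the Ricci flow case; note that the relevant bounds are in Lemma~4.7, not Lemma~4.10.
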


\begin{proof}
From the conclusion of [Ro] which indicates
that the functional $\mu_\alpha(g,\phi,\tau)$ in Ricci harmonic flow
is always attainable by some smooth function, it can be easily seen
that $\mu_\alpha(g,\phi,\tau)$ is continuous for $\tau>0$.

When $\lambda_\alpha(g,\phi)\le0$, since
\[
\lim_{\tau\to\infty}\Big[\tau\lambda_\alpha(g,\phi)-\frac{m}{2}\ln(4\pi\tau)\Big]
=-\infty,
\]
from the first inequality of above lemma,
$\nu_\alpha(g,\phi)=-\infty$.

When $\lambda_\alpha(g,\phi)>0$, since
\[
\lim_{\tau\to\infty}\Big[\Big(\tau-\frac{m}{8}\Big)\lambda_\alpha(g,\phi)-\frac
{m}{2}\ln(4\pi\tau)\Big]=+\infty,
\]
from the second inequality of above lemma,
$\mu_\alpha(g,\phi,\tau)>-\infty$ as $\tau\to\infty$. Like the
functional $\mu(g,\tau)$ in Ricci flow,
$\mu_\alpha(g,\phi,\tau)>-\infty$ as $\tau\to0^+$. This gives
$\nu_\alpha(g,\phi) \ne-\infty$. Corollary is proved.
\end{proof}

It follows that $\nu_\alpha$ functional is valuable only when
$\lambda_\alpha>0$. The assumption of our main theorem implies the
positivity of $\lambda_\alpha$ along the Ricci harmonic flow. This
fact will be proved later.

\begin{corollary}
If
$\lambda_\alpha(g,\phi)\le0$, then
$\mu_\alpha(g,\phi,\tau)\le\ln{\rm Vol}
(g)-\frac{m}{2}\ln(4\pi\tau)-m+1$.
\end{corollary}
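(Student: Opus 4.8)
The plan is to extract the inequality from the upper estimate for $\mu_\alpha$ proved in Lemma 4.7, but keeping better track of the entropy term than was needed there. Recall that the proof of that upper bound shows, for every $u\in C^\infty(M^m)$ with $\int_{M^m}u^2\,dV=1$,
\[
\mu_\alpha(g,\phi,\tau)\le\tau\int_{M^m}\bigl(Ru^2+4|\nabla u|^2-\alpha|\nabla\phi|^2u^2\bigr)\,dV-\int_{M^m}u^2\ln u^2\,dV-\frac{m}{2}\ln(4\pi\tau)-m,
\]
since $\mu_\alpha(g,\phi,\tau)$ is an infimum over admissible test functions and any such $u$ determines one. I would take $u=u_1$, the normalized positive first eigenfunction of $-4\Delta+R-\alpha|\nabla\phi|^2$, so that the first integral is exactly $\lambda_\alpha(g,\phi)$:
\[
\mu_\alpha(g,\phi,\tau)\le\tau\lambda_\alpha(g,\phi)-\int_{M^m}u_1^2\ln u_1^2\,dV-\frac{m}{2}\ln(4\pi\tau)-m.
\]

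Now I would invoke the hypothesis: $\lambda_\alpha(g,\phi)\le0$ and $\tau>0$ give $\tau\lambda_\alpha(g,\phi)\le0$, so that term is discarded. It remains to bound the entropy $-\int_{M^m}u_1^2\ln u_1^2\,dV$ by $\ln{\rm Vol}(g)+1$. For this I would renormalize, setting $w={\rm Vol}(g)\,u_1^2$ so that ${\rm Vol}(g)^{-1}\int_{M^m}w\,dV=1$; expanding $\ln u_1^2=\ln w-\ln{\rm Vol}(g)$ yields
\[
-\int_{M^m}u_1^2\ln u_1^2\,dV=\ln{\rm Vol}(g)-{\rm Vol}(g)^{-1}\int_{M^m}w\ln w\,dV\le\ln{\rm Vol}(g)+1,
\]
where the last step uses $-t\ln t\le1$ for all $t>0$ pointwise on the integrand $-w\ln w$. (Alternatively, Jensen's inequality for the concave function $\ln$ against the probability measure $u_1^2\,dV$ gives $-\int u_1^2\ln u_1^2\,dV\le\ln{\rm Vol}(g)$ directly, even without the $+1$; the weaker version stated here is all the later argument needs.) Feeding these two observations back in gives exactly $\mu_\alpha(g,\phi,\tau)\le\ln{\rm Vol}(g)-\frac{m}{2}\ln(4\pi\tau)-m+1$.

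I do not expect a genuine obstacle; the one point worth emphasizing is that the corollary does \emph{not} follow from the statement of Lemma 4.7 as it stands, because the crude estimate $-\int u^2\ln u^2\,dV\le{\rm Vol}(g)$ used there is too lossy (indeed ${\rm Vol}(g)\ge\ln{\rm Vol}(g)+1$ always). One therefore has to go back into the proof of that lemma and run it again with the sharper entropy bound above; apart from that, every ingredient — validity of the upper estimate for all $\tau>0$, and availability of the first eigenfunction as a test function realizing $\lambda_\alpha$ — is already supplied by Lemma 4.7 and the discussion preceding it.
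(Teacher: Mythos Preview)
Your proof is correct, but the paper takes a different and somewhat slicker route. Rather than revisiting the proof of Lemma~4.7 to sharpen the entropy term, the paper exploits the scale invariance $\mu_\alpha(ag,\phi,a\tau)=\mu_\alpha(g,\phi,\tau)$: setting $a=V={\rm Vol}(g)^{-2/m}$ normalizes to unit volume, and then the upper bound (4.2) from Lemma~4.7 applied to $(Vg,\phi,V\tau)$ gives ${\rm Vol}(Vg)=1$ and $-\frac{m}{2}\ln(4\pi V\tau)=-\frac{m}{2}\ln(4\pi\tau)+\ln{\rm Vol}(g)$, while $V\tau\lambda_\alpha(Vg,\phi)=\tau\lambda_\alpha(g,\phi)\le 0$. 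This yields the stated inequality immediately. So your closing remark is not quite accurate: the corollary \emph{does} follow from the statement of Lemma~4.7 once one recalls the (elementary) scaling behavior of $\mu_\alpha$; no re-entry into the proof is needed. Your approach has the virtue of being self-contained and of revealing, via the Jensen alternative you mention, that the $+1$ is in fact superfluous.
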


\begin{proof}
First note that
$\mu_\alpha(ag,\phi,a\tau)=\mu_\alpha(g,\phi,\tau)$ for any $a>0$ by
a direct computation. Set $V={\rm Vol}(g)^{-2/m}$, then by the above
lemma,
\begin{eqnarray*}
\mu_\alpha(g,\phi,\tau)\!\!\!&=&\!\!\!\mu_\alpha(Vg,\phi,V\tau) \\
\!\!\!&\le&\!\!\!V\tau\lambda_\alpha(Vg,\phi)-\frac{m}{2}\ln(4\pi V\tau)-m+{\rm Vol}(Vg) \\
\!\!\!&\le&\!\!\!\ln{\rm Vol}(g)-\frac{m}{2}\ln(4\pi\tau)-m+1.
\end{eqnarray*}
\end{proof}

\begin{lemma}
Let
$(g(t),\phi(t)),\,t\in[0,T)$, be a solution to the Ricci harmonic
flow on a closed manifold $M^m$. If
$\lambda_\alpha(g(t),\phi(t))\le0$ for all $t$, then there exist
constants $c_1,c_2>0$ depending only on $m,\alpha$ and $(g(0),
\phi(0))$, such that for all $t\ge0$ we have ${\rm Vol}(g(t))\ge c_1
{\rm e}^{-c_2t}$.
\end{lemma}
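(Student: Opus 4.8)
The plan is to bypass the evolution of the volume form and instead chain together three functional estimates already established: the upper bound for $\mu_\alpha$ under $\lambda_\alpha\le 0$ (the corollary stating that $\lambda_\alpha(g,\phi)\le 0$ implies $\mu_\alpha(g,\phi,\tau)\le\ln{\rm Vol}(g)-\tfrac m2\ln(4\pi\tau)-m+1$ for every $\tau>0$); the Perelman-type monotonicity of $s\mapsto\mu_\alpha(g(s),\phi(s),\bar T-s)$ along the Ricci harmonic flow; and the lower bound for $\mu_\alpha$ of the preceding lemma, valid for $\tau>m/8$. The mechanism is that the first inequality converts a lower bound for ${\rm Vol}(g(t))$ into a lower bound for $\mu_\alpha$ at time $t$; monotonicity transports that to a statement purely at time $0$; and the lemma's lower bound then controls the time-$0$ quantity in terms of $\lambda_\alpha(g(0),\phi(0))$, the Sobolev constant of $g(0)$, and $\inf_M(R(0)-\alpha|\nabla\phi(0)|^2)$, all of which are determined by the initial data.

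Concretely, I would fix a constant $\theta>m/8$ (say $\theta=m$). For a given time $t\in[0,T)$ at which we want to estimate the volume, apply the monotonicity with the constant $\bar T:=t+\theta$: since $\bar T-s\ge\theta>0$ on $[0,t]$, one gets $\mu_\alpha(g(0),\phi(0),\bar T)\le\mu_\alpha(g(t),\phi(t),\theta)$. Because $\lambda_\alpha(g(t),\phi(t))\le 0$ by hypothesis, the corollary at time $t$ with $\tau=\theta$ gives $\mu_\alpha(g(t),\phi(t),\theta)\le\ln{\rm Vol}(g(t))-\tfrac m2\ln(4\pi\theta)-m+1$, hence
\[
\ln{\rm Vol}(g(t))\ \ge\ \mu_\alpha(g(0),\phi(0),\bar T)+\tfrac m2\ln(4\pi\theta)+m-1 .
\]
Then I would bound $\mu_\alpha(g(0),\phi(0),\bar T)$ below using the lemma at $t=0$ with $\tau=\bar T=t+\theta>m/8$, obtaining $\mu_\alpha(g(0),\phi(0),\bar T)\ge\bar T\,\lambda_\alpha(g(0),\phi(0))-\tfrac m2\ln(4\pi\bar T)-m-\tfrac m8\big(\lambda_\alpha(g(0),\phi(0))-\inf_M(R(0)-\alpha|\nabla\phi(0)|^2)\big)-m\ln C_s$. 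Writing $\Lambda:=\lambda_\alpha(g(0),\phi(0))\le 0$ and substituting $\bar T=t+\theta$, the two logarithmic terms combine into $-\tfrac m2\ln(1+t/\theta)\ge-\tfrac{m}{2\theta}t$ and the genuinely $t$-linear part is $\Lambda t$, while all remaining terms form a constant depending only on $m$, $\alpha$, $(g(0),\phi(0))$. Collecting everything,
\[
\ln{\rm Vol}(g(t))\ \ge\ \Big(\Lambda-\tfrac{m}{2\theta}\Big)t+\ln c_1 ,
\]
with $c_1>0$ the exponential of that constant; since $\Lambda\le0$ the coefficient $c_2:=\tfrac{m}{2\theta}-\Lambda$ is strictly positive, which is exactly ${\rm Vol}(g(t))\ge c_1{\rm e}^{-c_2 t}$.

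The only point requiring genuine care — and the step I would write out most carefully — is the accounting of admissible constants: one must check that the Sobolev constant $C_s$ of $g(0)$, the number $\inf_M(R(0)-\alpha|\nabla\phi(0)|^2)$, and $\lambda_\alpha(g(0),\phi(0))$ are all finite and depend only on $(g(0),\phi(0))$ together with $m$ and $\alpha$, which is immediate since $M^m$ is closed. One should also note that $\mu_\alpha(g(s),\phi(s),\bar T-s)$ is a finite number for each $s\in[0,t]$, so that the monotonicity proposition genuinely applies; this follows from the attainability of $\mu_\alpha$ by a smooth minimizer, even though the infimum $\nu_\alpha=\inf_{\tau>0}\mu_\alpha$ is $-\infty$ in this regime. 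No essential obstacle arises: once the three inequalities are lined up and the elementary estimate $\ln(1+x)\le x$ is invoked, the conclusion drops out.
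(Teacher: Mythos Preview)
Your proposal is correct and follows essentially the same approach as the paper's own proof: chain the monotonicity of $\mu_\alpha$ (Proposition~2.7), the lower bound of Lemma~4.7 applied at time $0$ with $\tau=t+\theta$, and the upper bound of Corollary~4.9 applied at time $t$ with $\tau=\theta$, then use $\ln(1+x)\le x$ to extract the linear-in-$t$ decay. The only cosmetic difference is that the paper takes $\theta=\tfrac{m}{8}$ rather than your $\theta=m$, which changes the explicit constants but not the argument.
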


\begin{proof}
By Proposition 2.7 and Lemma 4.7, we have
\begin{eqnarray*}
&&\!\!\!\mu_\alpha\Big(g(t),\phi(t),\frac{m}{8}\Big)\ge\mu_\alpha\Big(g(0),
\phi(0),\frac{m}{8}+t\Big) \\
&\ge&\!\!\!\lambda_\alpha(g(0),\phi(0))t-\frac{m}{2}\ln\bigg(4\pi\bigg(t+
\frac{m}{8}\bigg)\bigg) \\
&&\!+\frac{m}{8}\inf\{R(\cdot,0)-\alpha|\nabla\phi|^2(\cdot,0)\}-m-m\ln
C_s(g(0)) \\
&\ge&\!\!\!\bigg(\lambda_\alpha(g(0),\phi(0))-4\bigg)t-\frac{m}{2}\ln\bigg
(\frac{m}{2}\pi\bigg) \\
&&\!+\frac{m}{8}\inf\{R(\cdot,0)-\alpha|\nabla\phi|^2(\cdot,0)\}-m-m\ln
C_s(g(0)),
\end{eqnarray*}
where $C_s(g(0))$ denotes the Sobolev constant of $(M^m,g(0))$.
Setting
\[
c_1=\exp\Big(\frac{m}{8}\inf\{R(\cdot,0)-\alpha|\nabla\phi|^2(\cdot,0)\}
-m\ln C_s(g(0))-1\Big),\quad c_2=-\lambda_\alpha(g(0),\phi(0))+4,
\]
and substituting $\tau=\frac{m}{8}$ into Corollary 4.9, we obtain
the estimate
\[{\rm
Vol}(g(t))\ge\exp\Big(\mu_\alpha\Big(g(t),\phi(t),\frac{m}{8}\Big)\Big)+\frac{m}{2}
\ln\Big(\frac{m}{2}\pi\Big)+m-1\ge c_1\exp(-c_2t).
\]
\end{proof}

\begin{corollary}
Let
$(g(t),\phi(t)),\,t\in[0,T)$, be a maximal solution to the Ricci
harmonic flow on a closed manifold $M^m$ with $T<\infty$. If
$\lambda_\alpha(g(t),\phi(t)) \le0$ for all $t$, then any blow-up
limit is noncompact.
\end{corollary}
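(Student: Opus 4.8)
The plan is to deduce noncompactness from a uniform lower bound on volume that survives the blow-up rescaling. Recall that a blow-up limit here is produced exactly as in Theorem 4.6: one picks times $t_k\nearrow T$ and points $p_k$ with $Q_k:=Q(p_k,t_k)\to\infty$ (the divergence of $Q_k$ being forced since $T<\infty$ is a singular time, so $\sup_{M^m}Q(\cdot,t)\to\infty$ as $t\to T$ by Theorem 2.8, cf.\ Proposition 4.1), and passes to a pointed $C^\infty$ Cheeger--Gromov limit $(M^m_\infty,g_\infty(t),\phi_\infty(t),p_\infty)$ of $(M^m,Q_kg(t_k+Q_k^{-1}t),\phi(t_k+Q_k^{-1}t),p_k)$ via the compactness Theorem 2.9. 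I claim this limit cannot be compact.

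First I would extract a uniform positive lower volume bound along the original flow. Since $\lambda_\alpha(g(t),\phi(t))\le 0$ for all $t$, Lemma 4.11 supplies constants $c_1,c_2>0$ with ${\rm Vol}(g(t))\ge c_1{\rm e}^{-c_2t}$ for every $t\in[0,T)$. Because $T<\infty$ and $c_2>0$, this gives the time-independent bound ${\rm Vol}(g(t_k))\ge c_1{\rm e}^{-c_2T}=:v_0>0$ for all $k$. Next, using that under the homothety $g\mapsto Q_kg$ on an $m$-manifold the volume element scales by $Q_k^{m/2}$, I get
\[
{\rm Vol}\big(M^m,Q_kg(t_k)\big)=Q_k^{m/2}\,{\rm Vol}\big(M^m,g(t_k)\big)\ \ge\ Q_k^{m/2}v_0\ \longrightarrow\ \infty
\]
as $k\to\infty$, since $Q_k\to\infty$.

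Finally, suppose for contradiction the limit $(M^m_\infty,g_\infty(0))$ were compact. Then $C^\infty$ Cheeger--Gromov convergence is not merely $C^\infty_{\rm loc}$ on an exhaustion: for large $k$ the closed manifold $M^m$ carries diffeomorphisms $\psi_k$ onto $M^m_\infty$ with $\psi_k^*(Q_kg(t_k))\to g_\infty(0)$ in $C^\infty(M^m_\infty)$, whence ${\rm Vol}(M^m,Q_kg(t_k))={\rm Vol}(M^m_\infty,\psi_k^*(Q_kg(t_k)))\to{\rm Vol}(M^m_\infty,g_\infty(0))<\infty$. This contradicts the divergence just obtained, so the blow-up limit must be noncompact.

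There is no real analytic obstacle: the whole content is the exponential volume lower bound of Lemma 4.11 combined with the finiteness of $T$ and the volume scaling. The one point deserving care is the last step — one must use that a \emph{compact} limit of pointed solutions upgrades $C^\infty_{\rm loc}$ convergence to honest convergence of the closed manifolds (so that total volume is continuous under the limit), rather than just controlling geometry on compact exhausting domains; this is the only place the compactness assumption on $M^m$ (and hence on the rescaled manifolds) is used.
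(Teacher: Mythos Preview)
Your argument is correct and is exactly the paper's approach: use the exponential volume lower bound (that is Lemma 4.10, not ``Lemma 4.11'', which is the corollary you are proving) together with $T<\infty$ to get ${\rm Vol}(g(t_k))\ge v_0>0$, then observe ${\rm Vol}(Q_kg(t_k))=Q_k^{m/2}{\rm Vol}(g(t_k))\to\infty$, which is incompatible with a compact limit. The paper's proof is terser---it simply asserts that the rescaled volume diverges and hence the limit is noncompact---while you spell out carefully why a compact Cheeger--Gromov limit would force convergence of total volumes; this extra care is welcome but not a different method.
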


\begin{proof}
Suppose we have a blow-up sequence
$(M^m,Q_kg(t_k+Q_k^{-1}t),\phi(t_k+Q_k^{-1}t),p_k)$ of Ricci
harmonic flow solutions with $Q_k\to\infty$. By assumption and above
lemma, we have that the rescaled volume at time zero equals
$Q_k^{m/2}{\rm Vol}(g(t_k))\to\infty$. So the limit has infinite
volume and consequently cannot be compact.
\end{proof}

Now we are ready to give a

\noindent\emph{Proof of Theorem\;4.6.} By above corollary, we may
assume that $\lambda_\alpha(g(0),\phi(0))>0$. So Proposition 2.7
uses and there is a limit $\sigma=\lim_{t\to
T^-}\nu_\alpha(g(t),\phi(t))$. Then for any $t\in(-\infty,0]$, by
the smooth convergence,
\begin{eqnarray*}
\nu_\alpha(g_\infty(t),\phi_\infty(t))\!\!\!&=&\!\!\!\lim_{k\to\infty}\nu_\alpha
(Q_kg(t_k+Q_k^{-1}t),\phi(t_k+Q_k^{-1}t)) \\
\!\!\!&=&\!\!\!\lim_{k\to\infty}\nu_\alpha(g(t_k+Q_k^{-1}t),\phi(t_k+Q_k^{-1}t)) \\
\!\!\!&=&\!\!\!\lim_{t\to T^-}\nu_\alpha(g(t),\phi(t))=\sigma.
\end{eqnarray*}
That is, the $\nu_\alpha$ functional is constant on the limit flow.
Then Corollary 4.8 and Proposition 2.7 imply that
$(g_\infty(t),\phi_\infty(t))$  must be a shrinking Ricci harmonic
soliton. \hfill$\Box$

\subsection{Ancient solutions of Ricci harmonic flow have nonnegative scalar curvature}

In this part, we study the ancient solutions of Ricci harmonic flow in the case of compact
and noncompact manifolds. Specifically, the Type \uppercase\expandafter{\romannumeral1}
singularity model given by \S4.2 is just an ancient solution.

First is the compact case.

\begin{theorem}
Let $(g(t),\phi(t))$ be an ancient solution to Ricci harmonic flow (1.2) on a compact
manifold $M^m$, then for any $t$ such that the solution exists, we have $S=R-\alpha|\nabla\phi|^2\ge0$.
That is, the scalar curvature is always nonnegative.
\end{theorem}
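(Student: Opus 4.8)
The plan is to apply the maximum principle to the evolution equation for $S = R - \alpha|\nabla\phi|^2$ derived in the excerpt, namely
\[
\frac{\partial}{\partial t}S = \Delta S + 2|S_{ij}|^2 + 2\alpha|\tau_g\phi|^2.
\]
Since the reaction terms $2|S_{ij}|^2 + 2\alpha|\tau_g\phi|^2$ are nonnegative, one sees immediately that $S$ is a supersolution of the heat equation. On a compact manifold, first I would note that $S_{\min}(t) := \min_{x\in M^m} S(x,t)$ is Lipschitz in $t$ and, by the maximum principle applied to this supersolution inequality, satisfies $\frac{d}{dt}S_{\min}(t) \ge 2 S_{\min}(t)^2/m$ in the barrier sense — using that at a spatial minimum $\Delta S \ge 0$ and $|S_{ij}|^2 \ge \tfrac{1}{m}(g^{ij}S_{ij})^2 = \tfrac{1}{m}S^2$ by Cauchy–Schwarz, since $g^{ij}S_{ij} = R - \alpha|\nabla\phi|^2 = S$. (One should double-check the trace identity $g^{ij}S_{ij} = S$: $g^{ij}R_{ij} = R$ and $g^{ij}\nabla_i\phi\nabla_j\phi = |\nabla\phi|^2$, so it holds.)

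Next I would exploit the ancient hypothesis. Suppose for contradiction that $S_{\min}(t_0) = -\varepsilon < 0$ at some time $t_0$. The differential inequality $\frac{d}{dt}\sigma \ge \tfrac{2}{m}\sigma^2$ with $\sigma(t_0) = -\varepsilon$ forces, upon integrating backward in time, that $\sigma(t) \le \dfrac{-\varepsilon}{1 + \tfrac{2\varepsilon}{m}(t_0 - t)}$ would be violated — more precisely, solving the ODE $\dot\sigma = \tfrac{2}{m}\sigma^2$ with $\sigma(t_0) = -\varepsilon$ gives $\sigma(t) = \dfrac{-\varepsilon}{1 - \tfrac{2\varepsilon}{m}(t - t_0)}$, which blows down to $-\infty$ as $t \to t_0 - \tfrac{m}{2\varepsilon}$ from above; since $S_{\min}(t) \le \sigma(t)$ for $t \le t_0$ and the solution is ancient (exists for all $t \le t_0$), we reach a contradiction with the finiteness of $S_{\min}$ at $t = t_0 - \tfrac{m}{2\varepsilon}$ (the metric at that time being smooth, $S$ is bounded on the compact manifold). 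Hence $S_{\min}(t) \ge 0$ for all $t$, which is the claim.

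The main obstacle, such as it is, is making the barrier/maximum-principle argument rigorous when the minimum of $S$ is attained at different points at different times; this is handled by the standard Hamilton trick (using $\liminf$ of forward difference quotients, or Perelman's lemma on the Lipschitz continuity of $S_{\min}$), so it is routine rather than deep. One should also confirm that the evolution equation for $S$ quoted in the excerpt is valid in full generality (it is stated there without a sign condition on the target curvature, and indeed the derivation only uses (2.10)–(2.13), so no curvature assumption on $N^n$ enters). The compactness of $M^m$ is used twice: to ensure $S_{\min}(t)$ is finite and attained, and to legitimize the maximum principle; the noncompact case treated later presumably requires a different argument (cutoff functions or the assumption of bounded curvature), but for this theorem compactness makes everything clean.
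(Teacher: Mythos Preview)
Your proposal is correct and follows essentially the same approach as the paper: apply the maximum principle to the evolution equation (2.14) for $S$, use $|S_{ij}|^2 \ge \tfrac{1}{m}S^2$ to obtain $\frac{d}{dt}S_{\min} \ge \tfrac{2}{m}S_{\min}^2$, and integrate this inequality backward in time to contradict the finiteness of $S_{\min}$ at the blow-up time $t_0 - \tfrac{m}{2\varepsilon}$. (One typographical slip: the comparison solution should be $\sigma(t) = \dfrac{-\varepsilon}{1 + \tfrac{2\varepsilon}{m}(t - t_0)}$, with a plus sign in the denominator; your stated blow-up time and the inequality $S_{\min}(t) \le \sigma(t)$ for $t \le t_0$ are nevertheless correct.)
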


\begin{proof}
The main idea of the proof is using maximum principle to (2.14)
\[
\frac{\partial}{\partial t}S=\Delta S+2|S_{ij}|^2+2\alpha
|\tau_g\phi|^2.
\]

Define $S_{\min}(t)=\min_{M^m}S(\cdot,t)$, then
\begin{equation}
\frac{d}{dt}S_{\min}\ge\frac{2}{m}S_{\min}^2\ge0.
\end{equation}
Suppose there exists a time $t_0\in(-\infty,\Omega)$ and a point $x_0\in M^m$,
such that $S(x_0,t_0)<0$, then $S_{\min}(t_0)<0$. Pick some time $t_1<t_0$,
integrating (4.4) from $t_1$ to $t_0$ yields
\[
\frac{1}{S_{\min}(t_1)}\ge\frac{2}{m}(t_0-t_1)+\frac{1}{S_{\min}(t_0)}.
\]
Since $(g(t),\phi(t))$ is an ancient solution, $t_1$ can be chosen in the whole interval
$(-\infty,t_0)$. Let
\[
t_1=t_0+\frac{m}{2S_{\min}(t_0)},
\]
then we have
\[
\frac{1}{S_{\min}(t_1)}\ge0\;\Rightarrow\;S_{\min}(t_1)\ge0\ge
S_{\min}(t_0).
\]
But by (4.4), $S_{\min}$ increases, which is a contradiction! So the assumption is not
true, and the theorem is proved.
\end{proof}

As for the noncompact case, refer to the work for Ricci flow in [Chen], similar
conclusion can still be obtained.

Before giving the theorem, we first introduce the following lemma.

\begin{lemma}
Let $(g(x,t),\phi(x,t))$ be a solution to Ricci harmonic flow (1.2) on $M^m$,
and denote by $d_t(x,x_0)$ the distance between $x$ and $x_0$ with respect to the
metric $g(t)$. Suppose ${\rm
Rc}(\cdot,t)\le(m-1)K$ on $B_{t_0}(x_0,r_0)$ for some $x_0\in M^m$ and some positive
constant $K$ and $r_0$. Then at $t=t_0$ and outside $B_{t_0}(x_0,r_0)$, the distance
function $d(x,t)=d_t(x,x_0)$ satisfies the differential inequality
\[
\bigg(\frac{\partial}{\partial t}-\Delta\bigg)d\ge-(m-1)\bigg(\frac{2}{3}
Kr_0+r_0^{-1}\bigg).
\]
\end{lemma}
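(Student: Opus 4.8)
The plan is to follow Hamilton's classical distance-function estimate for Ricci flow (as used by Perelman), the only modification being that the evolution of $g$ now carries the extra nonnegative term $2\alpha\nabla\phi\otimes\nabla\phi$. Fix the time $t=t_0$ and a point $x$ outside $B_{t_0}(x_0,r_0)$, and let $\gamma:[0,s_0]\to M^m$ be a minimal unit-speed geodesic of $g(t_0)$ from $x_0$ to $x$, so that $s_0=d(x,t_0)>r_0$ and, by minimality of $\gamma$, the segment $\gamma([0,r_0])$ lies in the closed ball $\overline{B_{t_0}(x_0,r_0)}$, where the Ricci bound $\mathrm{Rc}(\dot\gamma,\dot\gamma)\le (m-1)K$ holds. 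I will prove separately a lower bound for $\partial_t d$ and an upper bound for $\Delta d$ at $(x,t_0)$ and then subtract; throughout, these inequalities are to be read in the barrier sense, to accommodate the Lipschitz regularity of $d$.

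For the time derivative I would use the first variation of arc length. Writing $S_{ij}=R_{ij}-\alpha\nabla_i\phi\nabla_j\phi$ so that $\frac{\partial}{\partial t}g=-2S$, and parametrizing $\gamma$ by $g(t_0)$-arc length, one has $\frac{d}{dt}\big|_{t_0}L_t(\gamma)=-\int_\gamma S(\dot\gamma,\dot\gamma)\,ds$. Since $d(x,t)\le L_t(\gamma)$ for all $t$ with equality at $t=t_0$, this gives $\frac{\partial}{\partial t}d=-\int_\gamma S(\dot\gamma,\dot\gamma)\,ds$ (in the barrier sense when $\gamma$ is not unique); and because $\nabla\phi\otimes\nabla\phi\ge 0$ as a quadratic form, $S\le\mathrm{Rc}$ as bilinear forms, whence $\frac{\partial}{\partial t}d\ge-\int_\gamma\mathrm{Rc}(\dot\gamma,\dot\gamma)\,ds$. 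Thus the harmonic-map term only helps, contributing in the favorable direction.

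For the Laplacian I would invoke the second variation / index form: $\Delta d(x)\le\sum_{i=1}^{m-1}I(Y_i,Y_i)$ for any vector fields $Y_i$ along $\gamma$ with $Y_i(0)=0$ and $Y_i(s_0)=E_i(s_0)$, where $\{E_i\}$ is a parallel orthonormal frame perpendicular to $\dot\gamma$, and $I(Y,Y)=\int_0^{s_0}(|\nabla_{\dot\gamma}Y|^2-\langle R(Y,\dot\gamma)\dot\gamma,Y\rangle)\,ds$. Taking the standard cutoff fields $Y_i=(s/r_0)E_i$ on $[0,r_0]$ and $Y_i=E_i$ on $[r_0,s_0]$ and summing over $i$ gives
\[
\Delta d(x)\le\frac{m-1}{r_0}-\int_0^{s_0}\mathrm{Rc}(\dot\gamma,\dot\gamma)\,ds+\int_0^{r_0}\Big(1-\frac{s^2}{r_0^2}\Big)\mathrm{Rc}(\dot\gamma,\dot\gamma)\,ds.
\]
On $[0,r_0]$ we have $\gamma(s)\in\overline{B_{t_0}(x_0,r_0)}$, so $\mathrm{Rc}(\dot\gamma,\dot\gamma)\le(m-1)K$, and $1-s^2/r_0^2\ge 0$; hence the last integral is at most $(m-1)K\int_0^{r_0}(1-s^2/r_0^2)\,ds=\tfrac{2}{3}(m-1)Kr_0$, yielding $\Delta d(x)\le(m-1)\big(\tfrac{2}{3}Kr_0+r_0^{-1}\big)-\int_\gamma\mathrm{Rc}(\dot\gamma,\dot\gamma)\,ds$. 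Subtracting the two bounds, the $\int_\gamma\mathrm{Rc}(\dot\gamma,\dot\gamma)\,ds$ terms cancel and $\big(\tfrac{\partial}{\partial t}-\Delta\big)d\ge-(m-1)\big(\tfrac{2}{3}Kr_0+r_0^{-1}\big)$, as asserted.

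The one point that needs care — exactly as in the Ricci flow case — is the rigorous barrier-sense justification when $x$ is a cut point of $x_0$ or the minimal geodesic is not unique: this is handled in the usual way by replacing $x_0$ with a nearby point $x_0'=\gamma(\varepsilon)$ on $\gamma$, so that $\gamma|_{[\varepsilon,s_0]}$ is the unique minimal geodesic from $x_0'$ to $x$, passing to a smooth upper support function for $d$ near $x$, and letting $\varepsilon\to 0$; the perturbation introduces only an error tending to $0$. I expect this regularity bookkeeping to be the main (though routine) obstacle. It is worth stressing that the hypothesis is used only on $B_{t_0}(x_0,r_0)$: the contributions of $\mathrm{Rc}$ along $\gamma|_{[r_0,s_0]}$, where no curvature bound is assumed, cancel identically between the first- and second-variation estimates, which is precisely why the localized assumption suffices.
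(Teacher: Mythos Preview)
Your proposal is correct and follows essentially the same approach as the paper: both compute $\partial_t d$ via the first variation of arc length along a minimizing geodesic, bound $\Delta d$ via the second variation/index comparison with the standard piecewise-linear test fields $Y_i$, subtract so that the $\int_\gamma \mathrm{Rc}(\dot\gamma,\dot\gamma)\,ds$ terms cancel, and observe that the extra $\alpha(\nabla\phi\otimes\nabla\phi)(\dot\gamma,\dot\gamma)=\alpha|\nabla_{\dot\gamma}\phi|^2\ge 0$ from the Ricci harmonic flow only improves the inequality. The paper's treatment of the cut-locus issue is the same one-line remark (``understand the inequality in the barrier sense''); your more detailed support-function sketch is a welcome elaboration but not a different argument.
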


\begin{proof}
Let $\gamma:[0,d(x,x_0)]\to M^m$ be a shortest normal geodesic
from $x_0$ to $x$ with respect to the metric $g(t_0)$. We may assume that $x$ and $x_0$
are not conjugate to each other in the metric $g(t_0)$, otherwise we can understand the
differential inequality in the barrier sense. Let $X=\dot{\gamma}(0)$, and let
$\{X,e_1,\cdots,e_{m-1}\}$ be an orthonormal basis of $T_{x_0}M$. Extend this basis
parallel along $\gamma$ to form a parallel orthonormal basis $\{X(s),e_1(s),\cdots,e_{m-1}(s)\}$
along $\gamma$.

Let $X_i(s),\;i=1,\cdots,m-1$ be the Jacobian fields along $\gamma$ such that $X_i(0)=0$
and $X_i(d(x,t_0))=e_i(d(x,t_0))$ for $i=1,\cdots,m-1$. Then it is well-known that
\[
\Delta d_{t_0}(x,x_0)=\sum_{i=1}^{m-1}\int_0^{d(x,t_0)}(|\dot{X}_i|^2-
R(X,X_i,X,X_i))ds.
\]

Define vector fields $Y_i,\;i=1,\cdots,m-1$, along $\gamma$ as follows:
\[
Y_i(s)=
\left\{
\begin{array}{l l}
\frac{s}{r_0}e_i(s), & \mbox{if}s\in[0,r_0], \\
e_i(s), &  \mbox{if}s\in[r_0,d(x,t_0)].
\end{array}
\right.
\]
They have the same value as the corresponding Jacobian fields $X_i(s)$ at the two
end points of $\gamma$. Then by standard index comparison theorem we have
\begin{eqnarray*}
\Delta d_{t_0}(x,x_0)\!\!\!&=&\!\!\!\sum_{i=1}^{m-1}\int_0^{d(x,t_0)}
(|\dot{X}_i|^2-R(X,X_i,X,X_i))ds \\
\!\!\!&\le&\!\!\!\sum_{i=1}^{m-1}\int_0^{d(x,t_0)}(|\dot{Y}_i|^2-
R(X,Y_i,X,Y_i))ds \\
\!\!\!&=&\!\!\!\int_0^{r_0}\frac{1}{r_0^2}(m-1-s^2{\rm Rc}(X,X))ds
+\int_{r_0}^{d(x,t_0)}(-{\rm Rc}(X,X))ds \\
\!\!\!&=&\!\!\!-\int_\gamma{\rm Rc}(X,X)+\int_0^{r_0}\bigg(\frac{m-1}{r_0^2}
+\bigg(1-\frac{s^2}{r_0^2}\bigg){\rm Rc}(X,X)\bigg)ds \\
\!\!\!&\le&\!\!\!-\int_\gamma{\rm Rc}(X,X)+(m-1)\bigg(\frac{2}{3}Kr_0
+r_0^{-1}\bigg).
\end{eqnarray*}
On the other hand,
\begin{eqnarray*}
\frac{\partial}{\partial t}d_t(x,x_0)\!\!\!&=&\!\!\!\frac{\partial}{\partial t}
\int_0^{d(x,t_0)}\sqrt{g_{ij}X^iX^j}ds \\
\!\!\!&=&\!\!\!-\int_\gamma({\rm Rc}-\alpha\nabla\phi\otimes\nabla\phi)(X,X)ds.
\end{eqnarray*}
Make the subtraction and consider that $(\nabla\phi\otimes\nabla\phi)(X,X)=|\nabla_X\phi|^2\ge0$,
then the lemma is proved.
\end{proof}

Now we state and prove the result for noncompact case.

\begin{theorem}
Let $(g(t),\phi(t))$ be a complete ancient solution to Ricci harmonic flow (1.2)
on a noncompact manifold $M^m$, then for any $t$ such that the solution exists,
we have $S=R-\alpha|\nabla\phi|^2\ge0$. That is, the scalar curvature is always
nonnegative.
\end{theorem}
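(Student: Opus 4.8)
The plan is to reproduce the maximum-principle argument of the compact case (Theorem 4.12), but localized in space by a distance-function cutoff, with Lemma 4.13 providing the control on $\big(\frac{\partial}{\partial t}-\Delta\big)d$ that makes the localization go through; this is the strategy used for the Ricci flow in [Chen]. The driving inequality comes from (2.14): since $S=g^{ij}S_{ij}$ is the trace of the symmetric $2$-tensor $S_{ij}$, the Cauchy--Schwarz inequality gives $|S_{ij}|^2\ge\frac{1}{m}S^2$, and $\alpha|\tau_g\phi|^2\ge0$, so $\frac{\partial}{\partial t}S\ge\Delta S+\frac{2}{m}S^2$. The whole difficulty is that on a noncompact $M^m$ the function $S(\cdot,t)$ need not attain its spatial infimum, so the ODE comparison used in Theorem 4.12 is not directly available.

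First I would reduce to a pointwise statement. Fix a point $x_0\in M^m$ and a time $t_0$ at which the solution exists; it suffices to prove $S(x_0,t_0)\ge-\frac{m}{2A}$ for every $A>0$, since letting $A\to\infty$ then gives $S(x_0,t_0)\ge0$. Fix $A$ and use that the solution has bounded curvature on the compact interval $[t_0-A,t_0]$: choose $\Lambda=\Lambda(A)<\infty$ with $|S|\le\Lambda$ there and $K=K(A)$ with $\mathrm{Rc}\le(m-1)K$ there. Let $\sigma_A$ solve $\sigma'=\frac{2}{m}\sigma^2$ on $[t_0-A,t_0]$ with $\sigma_A(t_0-A)=-\Lambda$; then $\sigma_A$ is smooth, negative and increasing, and $\sigma_A(t_0)=-(\frac{1}{\Lambda}+\frac{2A}{m})^{-1}\ge-\frac{m}{2A}$. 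So it is enough to show $S\ge\sigma_A$ on $M^m\times[t_0-A,t_0]$ and then evaluate at $(x_0,t_0)$.

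To obtain $w:=S-\sigma_A\ge0$ on this slab I would argue as follows. At the initial time $w(\cdot,t_0-A)=S(\cdot,t_0-A)+\Lambda\ge0$, and $\frac{\partial}{\partial t}w\ge\Delta w+\frac{2}{m}(S+\sigma_A)w\ge\Delta w-C|w|$ with $C=C(\Lambda,m)$, so $e^{-Ct}w$ is a bounded supersolution of the heat equation at every point where it is negative. To propagate $w\ge0$ across the noncompact manifold, introduce the cutoff $\Phi_\rho(x,t)=\xi(d_t(x_0,x)/\rho)$, where $\xi\ge0$ is a fixed smooth nonincreasing function equal to $1$ on $[0,1]$ and to $0$ on $[2,\infty)$. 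Lemma 4.13, applied with the above $K$ and a fixed radius $r_0$, bounds $\big(\frac{\partial}{\partial t}-\Delta\big)d_t(x_0,\cdot)$ from below outside $B_t(x_0,r_0)$, while on $B_t(x_0,r_0)$ everything is harmless since the metrics on $[t_0-A,t_0]$ are uniformly equivalent; hence $\big(\frac{\partial}{\partial t}-\Delta\big)\Phi_\rho$ is controlled up to an error $O(1/\rho)$. A maximum-principle argument at a first negative point for $e^{-Ct}w$ plus an $O(1/\rho)$ barrier built from $\Phi_\rho$ and a term linear in $t$, carried out on the compact sets $\{\Phi_\rho>0\}$, then gives $w\ge-C'/\rho$ there; letting $\rho\to\infty$ yields $w\ge0$.

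I expect the localization to be the only genuine obstacle. In the compact case one simply invokes the minimum principle, whereas here one must build a spatial barrier whose heat operator does not degenerate as the cutoff radius tends to infinity, and this is precisely what Lemma 4.13 supplies --- note that it needs only a one-sided Ricci bound on a single ball, which is available from the bounded-curvature hypothesis on the finite backward interval $[t_0-A,t_0]$. The remaining steps are the routine bookkeeping of the barrier computation. Since $x_0$ and $t_0$ were arbitrary, this proves $S\ge0$ at every time for which the solution exists.
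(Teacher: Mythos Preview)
Your overall strategy --- localize with a distance cutoff, invoke Lemma 4.13 to control $(\partial_t-\Delta)d$, and run an ODE comparison against a solution of $\sigma'=\frac{2}{m}\sigma^2$ --- is the same as the paper's, but the implementations diverge in a way that matters.

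The paper (following [Chen]) uses a \emph{multiplicative} cutoff: one studies $u(x,t)=\psi\big(d_t(x_0,x)/(Ar_0)\big)\cdot S(x,t)$, where $\psi$ is smooth, nonincreasing, equal to $1$ on $(-\infty,\tfrac{7}{8}]$ and $0$ on $[1,\infty)$. Since $u$ has compact support, $u_{\min}(t)$ is attained, and at that point one reads off a differential inequality for $u_{\min}$. The only curvature input is a Ricci bound on the \emph{single small ball} $B_t(x_0,r_0)$ (this is all Lemma 4.13 needs); no global bound on $|S|$ or on $\mathrm{Rc}$ is ever used. One obtains $S(\cdot,t)\ge\min\{-\frac{m}{t+K_A^{-1}},-\frac{C}{Ar_0^2}\}$ on $B_t(x_0,\tfrac{3A}{4}r_0)$ for any complete solution on $[0,T]$, then shifts the time origin back and lets $A\to\infty$.

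Your version instead compares $S$ \emph{additively} with the ODE solution $\sigma_A$ and uses the cutoff only as an auxiliary barrier. To set this up you invoke a global bound $|S|\le\Lambda$ and a global Ricci upper bound on the whole slab $M^m\times[t_0-A,t_0]$. That is an extra hypothesis not present in the theorem, and it is exactly what Chen's multiplicative trick is designed to avoid. Moreover, once you grant a uniform curvature bound on the slab, the metrics are uniformly equivalent with controlled volume growth, and the standard noncompact maximum principle (with, say, a barrier $\epsilon e^{Ct}(1+d_t(x_0,\cdot)^2)$) already yields $S\ge\sigma_A$ directly; your detour through Lemma 4.13 and $\Phi_\rho$ becomes superfluous. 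So as written, your argument either proves a strictly weaker statement (with bounded curvature added to the hypotheses) or, under that added hypothesis, is more elaborate than necessary. To get the theorem as stated you should switch to the multiplicative localization $u=\psi\cdot S$, which confines everything to a ball of radius $Ar_0$ and never asks for a global curvature bound.
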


\begin{proof}
Suppose $(g(t),\phi(t))$ is defined for $t\in(-\infty,T]$ for some $T>0$.
We divide the arguments into two steps.

Step 1: Consider any complete solution $(g(t),\phi(t))$ defined on $[0,T]$.
For any fixed point $x_0\in M^m$, pick $r_0>0$ sufficiently small so that
\begin{equation}
|{\rm Rc}|(\cdot,t)\le(m-1)r_0^{-2}\quad\mbox{on}\;B_t(x_0,r_0)
\end{equation}
for all $t\in[0,T]$. Then for any positive number $A>2$, pick $K_A>0$ such that
$S\ge-K_A$ on $B_0(x_0,Ar_0)$ at $t=0$. We claim that there exists a universal
constant $C>0$ (depending on the dimension $m$) such that
\begin{equation}
S(\cdot,t)\ge\min\bigg\{-\frac{m}{t+K_A^{-1}},-\frac{C}{Ar_0^2}\bigg\}\quad
\mbox{on}\;B_t\bigg(x_0,\frac{3A}{4}r_0\bigg)
\end{equation}
for each $t\in[0,T]$.

Take a smooth nonnegative decreasing function $\bar{\psi}$ on
$\mathbb{R}$ such that $\bar{\psi}=1$ on $(-\infty,\frac{7}{8}]$,
and $\bar{\psi}=0$ on $[1,\infty)$. Also, let it satisfy $|\bar{\psi}'|\le
\bar{C}_1\bar{\psi}^{1/2},|\bar{\psi}''|\le\bar{C}_2$.
If we set $\psi=\bar{\psi}^p$, then we can find suitable $p>0$,
such that
\[
|\psi'|\le
C_1\psi^{\frac{1}{2}},\quad\Big|\frac{2\psi'^2}{\psi}-\psi''\Big|\le
C_2\psi^{\frac{1}{2}}.
\]
It can be verified that $\psi$ is still a smooth nonnegative decreasing
function on $\mathbb{R}$ which has value 1 on $(-\infty,\frac{7}{8}]$
and value 0 on $[1,\infty)$.

Consider the function
\[
u(x,t)=\psi\bigg(\frac{d_t(x_0,x)}{Ar_0}\bigg)\cdot S(x,t).
\]
Then we have
\begin{equation}
\bigg(\frac{\partial}{\partial t}-\Delta\bigg)u=\frac{\psi'S}{Ar_0}
\bigg(\frac{\partial}{\partial t}-\Delta\bigg)d_t(x_0,x)-\frac{\psi''S}{(Ar_0)^2}
+2\psi(|S_{ij}|^2+\alpha|\tau_g\phi|^2)-2\nabla\psi\cdot\nabla S
\end{equation}
at smooth points of the distance function $d_t(x_0,\cdot)$.

Let $u_{\min}(t)=\min_{M^m}u(\cdot,t)$. Whenever $u_{\min}(t_0)\le0$,
assume $u_{\min}(t_0)$ is achieved at some point $\bar{x}$, then at
$(\bar{x},t_0)$,
\[
\nabla(\psi S)=0,\;\Delta(\psi S)\ge0,\;\psi'S\ge0.
\]
When $\bar{x}$ is outside $B_{t_0}(x_0,r_0)$, by (4.5) and Lemma 4.13,
\[
\bigg(\frac{\partial}{\partial
t}-\Delta\bigg)d_t(x_0,x)\ge-\frac{5(m-1)}{3r_0}.
\]
Taking these inequalities into (4.7), we get
\begin{eqnarray*}
\frac{d}{dt}\bigg|_{t=t_0}u_{\min}\!\!\!&\ge&\!\!\!-\frac{5(m-1)}{3Ar_0^2}\psi'S+\frac{2}{m}\psi
S^2+\frac{1}{(Ar_0)^2}\bigg(\frac{2\psi'^2}{\psi}-\psi''\bigg)S \\
\!\!\!&\ge&\!\!\!\frac{2}{m}\psi
S^2+\frac{C'}{Ar_0^2}\psi^{\frac{1}{2}}S
\end{eqnarray*}
as long as $u_{\min}(t_0)\le0$. Here we use the definition
\[
\frac{d}{dt}\bigg|_{t=t_0}u_{\min}=\liminf_{h\to0^+}\frac{u_{\min}(t_0+h)-u_{\min}(t_0)}{h},
\]
and the prperty of $\psi$. Then by Cauchy inequality,
\[
\frac{1}{Ar_0^2}\psi^{\frac{1}{2}}S\ge-\frac{C_\delta^2}{(Ar_0^2)^2}-\delta(\psi
S^2),\quad \delta\;\mbox{is a small positive constant}.
\]
At last we come to
\begin{equation}
\frac{d}{dt}\bigg|_{t=t_0}u_{\min}\ge\frac{1}{m}u_{\min}^2(t_0)+\bigg(\frac{1}{2m}u_{\min}^2(t_0)
-\frac{\bar{C}^2}{(Ar_0^2)^2}\bigg).
\end{equation}
When $\bar{x}\in B_{t_0}(x_0,r_0)$, since $\psi'=0$, the above inequality still
holds.

Integrating (4.8) yields
\[
u_{\min}(t)\ge\min\bigg\{-\frac{m}{t+K_A^{-1}},-\frac{C}{Ar_0^2}\bigg\}\quad
\mbox{on}\;B_t\bigg(x_0,\frac{3A}{4}r_0\bigg),
\]
where $C$ is a positive constant depending only on $m$. So Claim (4.6) follows.

Step 2: Now if our solution $(g(t),\phi(t))$ is ancient, we can replace $t$ by
$t-\beta$ in (4.6) and get
\[
S(\cdot,t)\ge\min\bigg\{-\frac{m}{t-\beta+K_A^{-1}},-\frac{C}{Ar_0^2}\bigg\}\quad
\mbox{on}\;B_t\bigg(x_0,\frac{3A}{4}r_0\bigg).
\]
Letting $A\to\infty$ and then $\beta\to-\infty$, we complete the proof of the
theorem.
\end{proof}

\begin{remark}
By the proof of Theorem 4.12 and Theorem
4.14, provided $(g(t),\phi(t))$ exists in a time interval which goes to $-\infty$,
the conclusions hold. Therefore, for eternal solutions of Ricci harmonic flow,
we still have the scalar curvature is nonnegative.
\end{remark}

\bigskip

\medskip

\textit{E-mail address:} \texttt{shi.pe@husky.neu.edu}

\end{document}